\documentclass[10pt]{amsart}
\usepackage{FancyMath}

\title[Koszul dual algebras part 2]{Koszul dual $\A_{\infty}$-algebras from star-shaped diagrams -- part 2}
\author[Isabella Khan]{Isabella Khan}
\date{\today}

\begin{document}
	\begin{abstract}
		This paper proves a Koszul duality result between weighted $\A_{\infty}$-algebras constructed in the author's previous work. In the process, we construct a new box tensor product for weighted $\A_{\infty}$ bimodules, and verify a correspondence between weighted $\A_{\infty}$-algebra maps and a particular class of $\A_{\infty}$-bimodule.
	\end{abstract}
	\maketitle\blfootnote{The author was partially supported by a NSF Graduate Research Fellowship, and by the Simons Collaboration on New Structures in Low Dimensional Topology.}
	\tableofcontents
	
	\thispagestyle{empty}
	
	\section{Introduction}

    The notion of Koszul duality arises frequently in both algebra and, more recently, in low-dimensional topology. From the abstract representation theoretic perspective it can be viewed as a certain type of equivalence of categories, as in the work of Francis-Gaitsgory on chiral Lie algebras~\cite{FrGa}, or  recently~\cite{Heuts}, in which Heuts proves an operadic equivalence, disproving a conjecture from~\cite{FrGa}. In the context of low-dimensional topology, Koszul duality arises in the study of algebraic structures relating to Heegaard Floer homology and knot Floer homology, providing  a notion of algebraic equivalence which accurately reflects the holomorphic curve theory involved in these constructions. For instance, in the work of Ozsv\'ath and Szab\'o,~\cite{Pong1},     the authors prove that the $\A_{\infty}$-structure induced by a pair of algebras from bordered knot Floer homology introduced in~\cite{AlgMat} satisfies a Koszul duality relation. In~\cite{LOTtorus}, Lipshitz, Ozsv\'ath and Thurston prove that the analogous algebras from the $HF^{-}$-version of bordered Heegaard Floer homology are Koszul dual to themselves, and in~\cite{ZemkeKos}, Zemke produces Koszul dual $\A_{\infty}$-algebras which encode the Dehn surgery formulae from Heegaard Floer homology. 
    
    In~\cite{KhKos}, the author constructed a family of weighted $\A_{\infty}$-algebras motivated by bordered knot Floer homology, which corresponded to a particular family of handle decompositions of a punctured disk. The author also constructed in~\cite{KhKos} a pair of $\A_{\infty}$-bimodules which encode the holomorphic geometry of these  handle decompositions. The current paper builds on these results to show that the $\A_{\infty}$-algebras and -bimodules constructed in~\cite{KhKos} satisfy a Koszul duality relation: 

    \begin{theorem}\label{Koszul}
        Let $\A$ and $\B$ be the weighted $\A_{\infty}$-algebras defined in Section~\ref{algs} (and in~\cite{KhKos}). Then there exists a DD-bimodule $\:^{\A} X^{\B}$, constructed in Section~\ref{dd} and a weighted AA-bimodule $\:_{\B} Y_{\A}$ constructed in Section~\ref{aa} such that 
        \begin{equation}\label{diag4}
            \:^{\A} X^{\B} \boxtimes \:_{\B} Y_{\A} \cong \:^{\A}\id_{\A}
        \end{equation}
        and
        \begin{equation}\label{diag5}
            \:^{\B} X^{\A} \boxtimes \:_{\A} Y_{\B} \cong \:^{\B}\id_{\B}
        \end{equation}
        where $\:^{\A} \id_{\A}$ and $\:^{\B} \id_{\B}$ are the identity DA-bimodules over $\A$ and $\B$ respectively (as in Definition~\ref{aux1}), and $\boxtimes$ is as in Definition~\ref{box1}. 
    \end{theorem}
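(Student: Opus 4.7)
I would verify equation~\eqref{diag4} directly; equation~\eqref{diag5} should then follow by running the same argument with the roles of $\A$ and $\B$ interchanged, since the constructions of $X$ and $Y$ in Sections~\ref{dd} and~\ref{aa} are essentially symmetric in the two algebras.

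The plan is to construct an explicit DA-bimodule morphism $f\colon \:^{\A} X^{\B} \boxtimes \:_{\B} Y_{\A} \to \:^{\A}\id_{\A}$ and check that it is invertible. The first step is combinatorial: using the description of $X$ in Section~\ref{dd} and of $Y$ in Section~\ref{aa}, I would read off the generators of the box tensor product as pairs with matching $\B$-idempotents. After this idempotent matching, these pairs should be in bijection with the generating idempotents of $\A$, i.e.\ with the generators of $\:^{\A}\id_{\A}$, giving a candidate for $f$ on underlying modules. The second step is to verify that $f$ intertwines the DA-operations. By Definition~\ref{box1}, the operation $\delta_k$ on $X \boxtimes Y$ is obtained by feeding the $\B$-outputs of $\delta^X$ into the $\B$-inputs of $m^Y$, summed over all arities and weight values; a boundedness property of the star-shaped construction should guarantee convergence of these sums. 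Since $\:^{\A}\id_{\A}$ has $\delta$ concentrated in $\delta_2$ and given by multiplication in $\A$, the verification reduces to showing that (i) the higher $\delta_k$ on $X \boxtimes Y$ vanish and (ii) $\delta_2$ on $X \boxtimes Y$ recovers the multiplication on $\A$ under $f$.

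The main obstacle is the algebraic bookkeeping in step two: the weighted $\A_{\infty}$-structure relations for $X$ and $Y$ involve formal sums indexed by a weight parameter, and showing that the cancellations in the box tensor product collapse $\delta_k$ to a single $\delta_2$ term will likely require an induction on the total length of the incoming algebra sequence, using the Koszul-style pairing between generators of $\A$ and $\B$ that was built into the construction in~\cite{KhKos}. Once these structure equations are verified, invertibility of $f$ follows from the bijection of generators already established in step one, completing the proof of~\eqref{diag4}.
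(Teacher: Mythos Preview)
Your generator identification is correct and matches the paper. Your organizing principle is also close to the paper's: rather than building a morphism $f$ directly, the paper invokes Proposition~\ref{aux5} to write $\:^{\A} X^{\B}\boxtimes\:_{\B} Y_{\A}\cong\:^{\A}[\varphi]_{\A}$ for some weighted $\A_{\infty}$-homomorphism $\varphi\colon\A\to\A$ and then proves $\varphi=\id$. Since $[\id]\cong\:^{\A}\id_{\A}$, this is equivalent to what you propose.

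The substantive gap is in your step two. You plan to show the higher $\delta_k$ vanish by an induction on input length, relying on cancellations coming from a ``Koszul-style pairing''. The paper does not attempt any such combinatorial collapse, and it is unclear one exists: the box-product operations involve the full bimodule diagonal primitive $\p^{n,j,\w}$ together with the infinitely many weighted operations on $\A$ and on $Y$, and there is no evident inductive mechanism that kills them term by term. What actually makes the higher operations vanish is a \emph{grading} argument. Every generator of $\A$ sits in Maslov grading $0$, while $\varphi_n^{\w}$ shifts Maslov grading by $2|\w|+n-1$; this forces $\varphi_n^{\w}=0$ for $(n,\w)\neq(1,0)$. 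The remaining check that $\varphi_1^0=\id$ is a small explicit computation on short chords using $\p^{1,1,0}$ and the formula for $\delta^{1,X}$, followed by the two-input $\A_{\infty}$-relation to propagate to products; this last propagation is the only place where anything like your ``induction on length'' survives.

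Your claim that~\eqref{diag5} follows by symmetry is also incorrect. The Maslov structure on $\B$ is quite different (short chords have $m=-1$, and $m(\e_0)=-(2N-2)$), so the one-line Maslov argument above does not go through. The paper instead uses the Alexander grading in tandem with the Maslov grading: one shows that every $b\in\B$ satisfies $m(b)\le -|A(b)|$, with equality only when no $V_i$ appears, and then compares this against the required shift $-|A|+2k+n-1$ to rule out $(n,k\e_0)\neq(1,0)$. So~\eqref{diag5} needs its own, more delicate, argument and cannot be obtained by interchanging $\A$ and $\B$.
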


    \begin{wrapfigure}{r}{4cm}\vspace{-25pt}
        \includegraphics[width = 4cm]{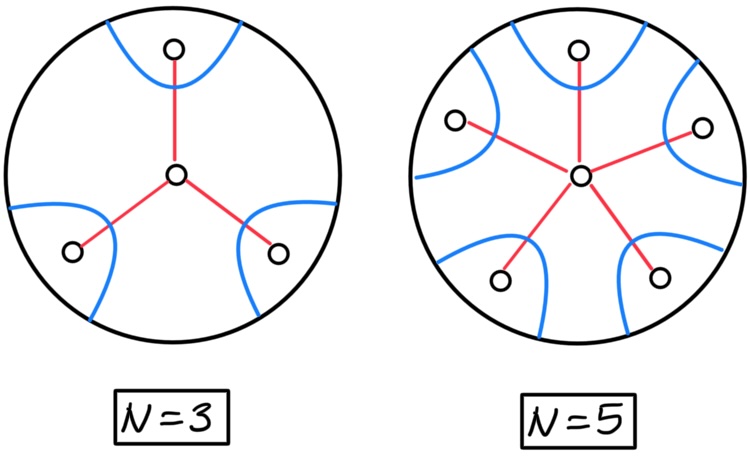}\vspace{-5pt}
        \caption{}\label{intro1}\vspace{-15pt}
    \end{wrapfigure}

    \subsection{Motivation}

    The algebras and bimodules from~\cite{KhKos} are largely geometric in their construction, and can be geometrically motivated in two ways. First, it is desirable to understand the algebra associated to an arbitrary handle decomposition of the disk. There are a number of existing constructions which can be viewed as steps in this direction, for instance the results of Ozsv\'ath and Szab\'o in~\cite{Pong1}, those of Manion in~\cite{Man}, and those of Roberts in~\cite{Rob}. The algebras constructed in~\cite{KhKos} correspond to a different family of handle-decompositions of the multiply punctured disk, two examples of which are pictured in Figure~\ref{intro1}, and we expect that they may represent a step towards understanding algebras associated to a more general family of handle decompositions. 
    Notably, in~\cite{Pong1}, the authors use a duality result to understand more about the properties of algebras corresponding to a particular handle decomposition of the disk; the duality result Theorem~\ref{Koszul} gives insight into similar properties in the case of the decompositions considered here. Moreover, a general handle decomposition of the punctured disk takes the form of a tree, and the current paper and~\cite{KhKos} both explore new features of the algebras $\A$ and $\B$ which might be expected to be necessary in the general case. With all this in mind, it seems very reasonable that Theorem~\ref{Koszul} could be viewed as progress towards the goal of understanding the algebra associated to arbitrary handle decompositions of a disk. 

    \begin{wrapfigure}{r}{5cm}\vspace{-10pt}
        \includegraphics[width = 5cm]{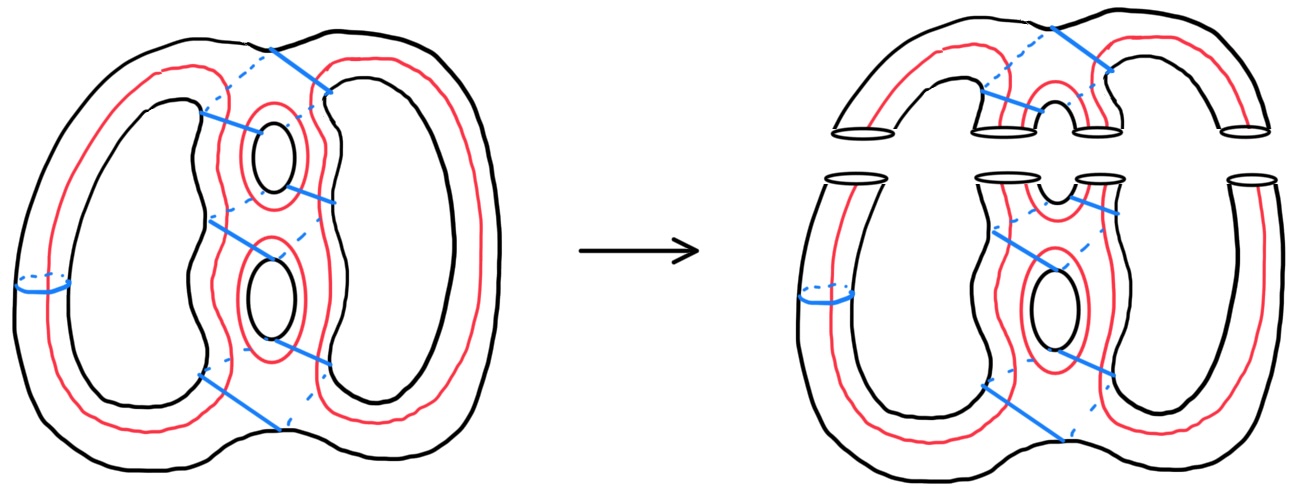}\vspace{-5pt}
        \caption{}\label{intro2}\vspace{-10pt}
    \end{wrapfigure}

    The results here and in~\cite{KhKos} are also motivated by the bordered knot Floer homology construction. Knot Floer homology is a topological invariant of knots in $S^3$ developed by Ozsv\'ath and Szab\'o in~\cite{OzSzKnot}, and, separately, by Rasmussen in~\cite{Ras}. It associates to any knot $K \subseteq S^3$ a bigraded abelian group $HFK^{\circ}(K)$, where ``$\circ$'' denotes one of the flavors ( $+, -$ or $\:\widehat\;$ ) of the invariant. The construction runs as follows: consider a projection of $K$ and a particular Heegaard diagram which encodes the data of $K$ in a particular sense -- for instance, for the left handed trefoil, we could consider the Heegaard diagram on the left hand side of Figure~\ref{intro2}. We then slice the diagram horizontally at a generic point to obtain two partial Heegaard diagrams, as on the right hand side of Figure~\ref{intro1}. To the horizontal slice, we associate an $\A_{\infty}$-algebra $\A_0$, and to each partial Heegaard diagram, we associate an $\A_{\infty}$-module such that when we take the tensor product over $\A_0$, we retrieve the knot Floer homology of $K$.

    \begin{wrapfigure}{r}{3cm}\vspace{-10pt}
        \begin{center}
        \includegraphics[width = 3cm]{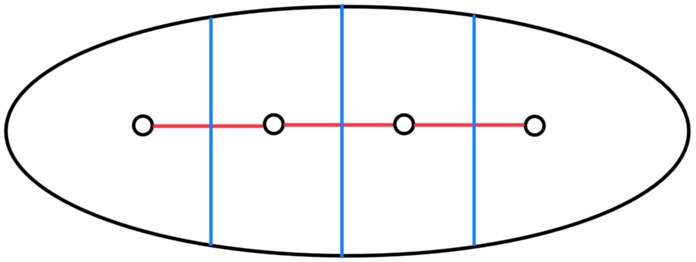}\end{center} \vspace{-10pt}
        \caption{}\label{intro3} \vspace{-15pt}
    \end{wrapfigure}
    
    The first step in any such construction is to define the algebra $\A_0$ associated to a given horizontal slice, which forms a linear graph such as the red portion of the diagram of Figure~\ref{intro3}. However, the algebras involved with the bordered $HFK^-$ flavor of knot Floer homology become very complicated; in order to understand these algebras better using a slightly easier algebra, Ozsv\'ath and Szab\'o construct in~\cite{Pong1} a ``pong algebra'' corresponding to the blue portion of Figure~\ref{intro3}.

    \begin{wrapfigure}{r}{4cm}\vspace{-25pt}
        \includegraphics[width = 4cm]{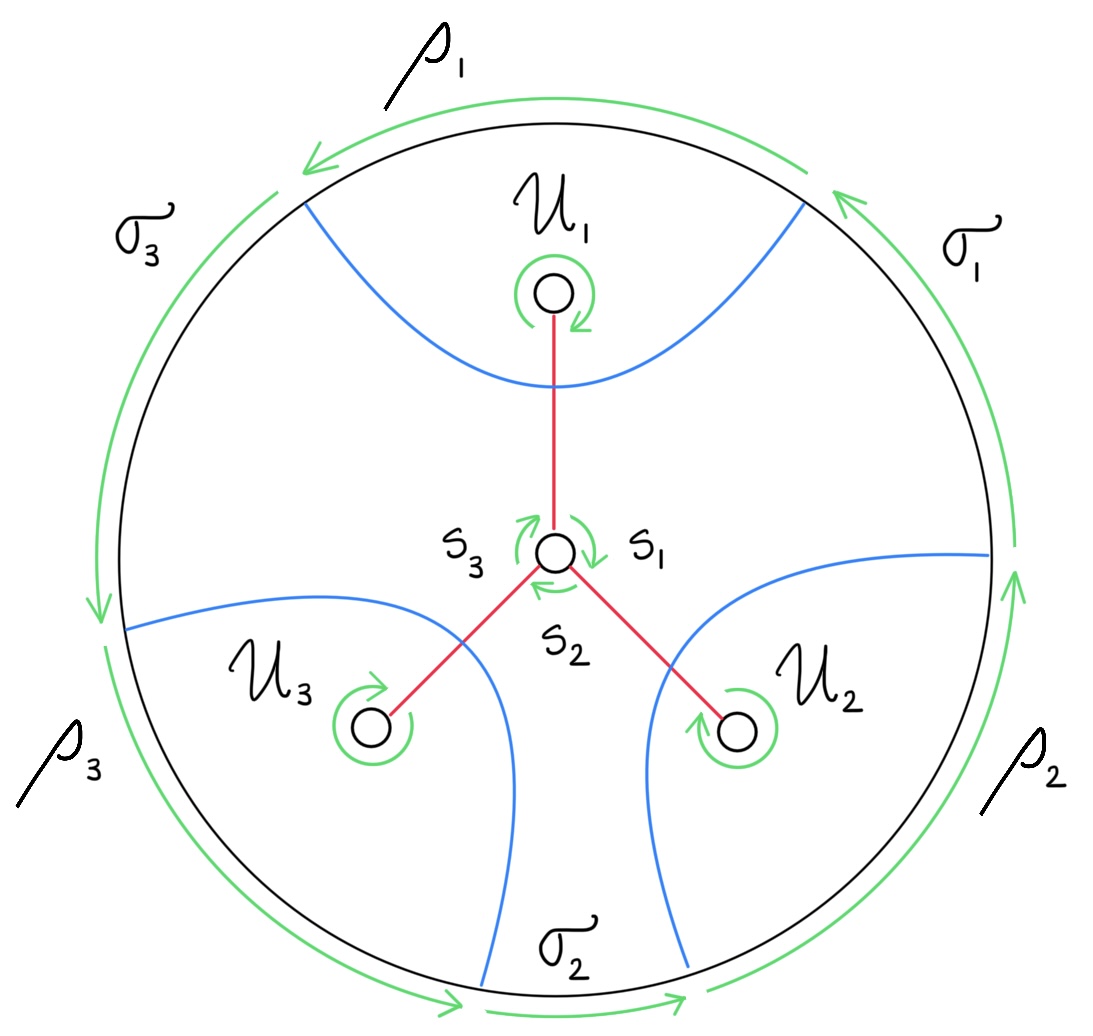}
        \caption{}\label{intro4} \vspace{-30pt}
    \end{wrapfigure}
    
    In the interest of understanding the algebras which arise from arcslide moves on these diagrams, it is desirable to construct algebras corresponding to a broader variety of graph. The higher-valence diagrams pictured in Figure~\ref{intro1}, which we call \emph{star diagrams} are a first step in this direction.

    \subsection{Geometric overview of the constructions from~\cite{KhKos}}

    We now give a brief overview of the construction of the algebras and bimodules involved in Theorem~\ref{Koszul}, with the aim of highlighting the geometric features of these constructions. Further specifics are given in Sections~\ref{algs} and~\ref{bims}, and for full details, see~\cite{KhKos}. This section assumes a certain amount of familiarity with the definitions of $\A_{\infty}$-algebras and bimodules; it may therefore be desirable to skip this section on a first reading, and return to it after reading Section~\ref{ainf}.
    
    The weighted $\A_{\infty}$-algebras $\A$ and $\B$ are $\A_{\infty}$-deformations of path algebras, whose generators correspond to arcs along the boundary circles of an $(N + 1)$-punctured disk called a \emph{star diagram}, two examples of which are pictured in Figure~\ref{intro1}. In Figure~\ref{intro4}, we illustrate the arcs which correspond to each generator for the case $N = 3$. The arcs labeled with $U_i$'s and $s_i$'s correspond to generators of $\A$, and the arcs labeled with $\rho_i$'s and $\sigma_i$'s correspond to generators of $\B$. Simple multiplication corresponds to concatenation of arcs. 
    
    In particular, the ground ring contains a subring $\F_2[\I_1, \ldots, \I_n]$, which we call the \emph{ring of idempotents} (since the $\I_i$ are defined so that $\I_i \I_j = 0$ if $i \neq j$ and $\I_i$ if $i = j$), which ensures that all non-zero operations make geometric sense. More specifically, note that each star diagram is decorated with $N$ red $\alpha$-arcs and $N$ blue $\beta$-arcs, which are labeled in clockwise order, starting at the 12 o'lock position. Each generator of $\A$ corresponds to an arc which starts and ends on some pair of $\alpha$-arcs, and each generator of $\B$ corresponds to an arc which starts and ends on some pair of $\beta$-arcs. The idempotents encode this information: we consider the case of $\A$ to illustrate this fact. Multiplication on $\A$ is defined such that for each generator $a \in \A$, there are unique $1 \leq i, j, \leq N$ with 
    \begin{equation}\label{intro5}
        \I_i \cdot a = a \cdot \I_j = a 
    \end{equation}
    and $\I_{i'} \cdot a = a \cdot \I_{j'} = 0$ for each $i' \neq i, \: j' \neq j$, and likewise for generators of $\B$. The $i, j$ such that~\eqref{intro5} holds are precisely those such that $a$ is an arc which starts on $\alpha_i$ and ends on $\alpha_j$. We call $i$ the \emph{initial idempotent} of $a$, and $j$ the \emph{final idempotent}, and construct operations such that $\mu_n^{\w} (a_1, \ldots, a_n)$ is always zero unless the initial idempotent of $a_k$ agrees with the final idempotent of $a_{k - 1}$, for each $2 \leq k \leq n$.
    
   Defining sensible higher operations is somewhat more complicated, but, in the case of $\A$, can also be understood geometrically. The higher operations on $\A$, are constructed by using the operad of $2N$-valent trees, which allows us to give explicit formulae for all of the infinitely many weighted and unweighted algebra operations. We illustrate a few of these in Figures~\ref{alg10} and~\ref{alg11}.

    All of these higher structures are motivated by the holomorphic curve theory of the decorated punctured disks which we call star diagrams. This is most apparent in the construction of the AA-bimodule $Y$ (see Section~\ref{aa} for details). The operations on this bimodule correspond to rigid holomorphic disks whose boundary contains a sequence of arcs corresponding to elements of $\A$ and $\B$, and the structure relations on this module correspond to 1-dimensional moduli spaces of disks, whose ends cancel in pairs. See Figure~\ref{aa17} for examples of disks corresponding to non-zero AA-bimodule operations. 
    
    Note in particular that the notion of \emph{weighted operations} -- that is, $\mu_n^{\w}$ with $\w \neq0$ -- which appear in this bimodule correspond to the inclusion of rigid holomorphic disks which pass entirely over one or more boundary circles. See for example the disk on the right side of Figure~\ref{aa17}. That these weighted operations introduce more algebraic complexity is unsurprising, given that moduli spaces of punctured disks are more difficult to understand than those of unpunctured disks. 

    \subsection{The structure of this paper}

    The primary aim of the current paper is to provide the proof of Theorem~\ref{Koszul}. Section~\ref{defs} gives the relevant definitions; many of these are standard, and for these, our exposition largely follows~\cite{DiagBible}. Several constructions, notably those in Sections~\ref{prim} and~\ref{boxy}, are original to this paper, as is Proposition~\ref{aux5}, a fact about weighted $\A_{\infty}$-algebra homomorphisms given in Section~\ref{ainf}, which is crucial to the proof of Theorem~\ref{Koszul}. In Section~\ref{algs}, we briefly outline the constructions of the $\A_{\infty}$-algebras and bimodules from~\cite{KhKos}, involved in Theorem~\ref{Koszul}. For the full constructions, see~\cite{KhKos}. In the final section, we give the proof of the main theorem. 

    Please note that a statement of Theorem~\ref{Koszul} appeared in a previous version of~\cite{KhKos}. However, the proof given there was incomplete, and~\cite{KhKos} has since been revised to contain only the constructions of the algebras and bimodules involved with Theorem~\ref{Koszul}. The current paper now contains the necessary algebraic framework, as well as a corrected and complete proof of Theorem~\ref{Koszul}. 

   \subsection*{Acknowledgments} I would like to thank Peter Ozsv\'ath for many helpful conversations during the preparation of this paper, and Robert Lipshitz, whose comments on my PhD thesis motivated the results given below. I would also like to acknowledge the anonymous referee whose comments on~\cite{KhKos} were very helpful in writing this paper.

    \section{Definitions}\label{defs}
        \subsection{Trees}

        We next define the space of \emph{stably weighted trees,} $X^{*,*}$. First, define a \emph{marked tree} to be a rooted planar tree $T$ with a subset of leaves called the \emph{inputs} of $T$ and a single leaf called the \emph{output} leaf of $T$. Define a \emph{popsicle} to be a leaf of a marked tree $T$ which is not an input or an output, and define a vertex of $T$ to be \emph{internal} if it is not an input or an output; hence internal vertices are popsicles or vertices at which $T$ has valence $> 1$. 

        A \emph{weighted tree} is a marked tree $T$ together with a weight function $\w$ from the vertices of $T$ to some lattice of weights $\Lambda = \Z_{\geq 0}\langle \e_1, \ldots, \e_n \rangle$. For $\w = \sum_i k_i \e_i \in \Lambda$ define
        \begin{equation}\label{defs13}
            |\w| = \sum_{i} k_i,
        \end{equation}
        and say that for $\vv = \sum_i \ell_i \e_i \in \Lambda$
        \begin{equation}\label{defs14}
            \vv < \w \leftrightarrow \ell_i < k_i \text{ for each } i, 
        \end{equation}
        and similarly for $\vv \leq \w$. We say that a vertex $v$ of a weighted tree $T$ is \emph{stable} if either the valence of $T$ at $v$ is $> 2$, or $\w(v) > 0$ (with ordering conventions as in~\eqref{defs13}). A weighted tree $T$ is called a \emph{stably weighted tree} if every internal vertex of $T$ is stable. Define the \emph{total weight} of a tree $T$ to be the sum
        \[
            \w = \sum_{v \text{ a vertex of } T} \w(v).
        \]

        Let $\TT_{n,\w}$ be the set of stably weighted trees with $n$ inputs and total weight $\w$. Define a dimension function on $\TT_{n, \w}$ by:
        \begin{equation}\label{defs12}
            \dim T = n + 2|\w|-v-1
        \end{equation}
        We define a differential on elements of $\TT_{n,\w}$ in the following way. Consider a pair $(S,e)$, where $S$ is a stably weighted tree and $e$ is an edge of $e$. Consider another stably weighted tree $T$. We say that $(S,e)$ is an \emph{edge expansion} of $T$ if and only if $T$ can be obtained from $S$, by contracting the edge $e$ into a single vertex. In this case, we also say that $S$ is obtained from $T$ by inserting an edge. 

        \begin{definition}\label{defs15}
            The chain complex $X^{*,*}_*$ of stably weighted trees
        \end{definition}
        
        \noindent Fix a ground ring $R$. Define the complex $X^{n, \w}_*$ of stably weighted trees with $n$ inputs and total weight $\w$ to be the $R$-module generated by $\TT_{n, \w}$, graded by dimension (defined as in~\eqref{defs12}), so that
        \[ 
            X^{n,\w}_k = R \langle T \in \TT_{n, \w} : \dim T = k \rangle
        \]
        The differential on $X^{n, \w}_*$ is determined by
        \begin{equation}\label{defs17}
            \del T = \sum_{(S,e) \text{ an edge expansion of } T} S
        \end{equation}
        Define a composition map on $\circ_i : X^{n, \w}_* \otimes X^{m, \vv}_* \to X^{(n + m - 1), \w + \vv}_*$ by letting $T \circ_i S$ be the tree obtained from gluing the output leaf of $S$ to the $i$-th input leaf of $T$. 

        \begin{lemma}\label{defs16}
            \emph{[Lemma 4.8 from~\cite{DiagBible}]} $X^{*,*}_*$ is a chain complex with differential defined in~\eqref{defs17}, and one application of the differential drops the dimension by 1. Also, $\circ_i$ induces a chain map for each $i$.
        \end{lemma}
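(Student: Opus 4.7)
The plan is to verify three things: that $\partial$ drops the dimension by exactly one, that $\partial^2 = 0$, and that each $\circ_i$ commutes with $\partial$ in the Leibniz sense.

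For the dimension claim, I would observe that an edge expansion replaces a single internal vertex $v$ by two internal vertices joined by the new edge, redistributing the incident edges and the weight $\w(v)$ between them (subject to the stability condition). Since the number of inputs $n$ and the total weight $|\w|$ are both preserved, and the vertex count $v$ increases by exactly $1$, formula~\eqref{defs12} gives $\dim S = \dim T - 1$.

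For $\partial^2 = 0$, the plan is to exhibit an involution on pairs $(S, e)$ with $S$ an edge-expansion of an edge-expansion of $T$. Given a tree $U$ obtained from $T$ by first inserting an edge $e_1$ (at some internal vertex $v$, splitting it as $v_1\text{-}v_2$) and then an edge $e_2$ (at a vertex $w$), I would show that $U$ can also be obtained by first inserting $e_2$ and then $e_1$. When $w$ is disjoint from $\{v_1,v_2\}$, this is obvious. The delicate case is when $w \in \{v_1, v_2\}$, i.e., the second insertion splits a vertex that the first insertion created; here one needs to verify that contracting $e_1$ first (rather than last) in $U$ yields a valid intermediate stably-weighted tree, and that the resulting pair of edge-expansions is still counted by $\partial^2$. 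Pairing $U$'s two preimages under two applications of $\partial$ shows they cancel (working over $\mathbb{F}_2$, or with an appropriate sign convention if $R$ is a general ring).

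For the chain-map property $\partial(T \circ_i S) = (\partial T) \circ_i S + T \circ_i (\partial S)$, I would classify each edge-expansion of $T \circ_i S$ according to where the new edge lives. An internal vertex of $T \circ_i S$ is either an internal vertex of $T$, an internal vertex of $S$, or the vertex formed at the gluing (the $i$-th input of $T$ identified with the output of $S$). Edge expansions at vertices of the first type are in bijection with terms of $(\partial T)\circ_i S$, those of the second type with terms of $T \circ_i (\partial S)$, and an insertion at the glued vertex corresponds uniquely to an insertion at whichever of the two "sides" the new edge sits. I would check that stability at the glued vertex is governed entirely by one side (say, the side containing the weight), so the bijection respects the stability condition; this is essentially the same analysis as for (1).

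The main obstacle will be the case analysis in step (2), specifically verifying that when the second edge is inserted inside a vertex created by the first, the reversed order always produces a legal pair of stably-weighted intermediate trees. Stability is the only nontrivial constraint here, since redistributing weight and valence between the split vertices must be compatible with both insertion orders; once this is checked, the involutive pairing and hence $\partial^2 = 0$ follow immediately.
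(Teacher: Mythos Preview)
The paper does not supply its own proof of this lemma; it is quoted as Lemma~4.8 of~\cite{DiagBible} and left unproved. Your outline is the standard argument and is essentially correct.

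Two small remarks. First, the stability check you flag as the main obstacle in the $\partial^2$ argument is immediate once written out: if the two inserted edges share a vertex in $U$, say $e_1$ joins $a$ to $b$ and $e_2$ joins $b$ to $c$, then contracting $e_1$ merges $a$ and $b$ into a vertex of weight $\w(a)+\w(b)$ and valence equal to the sum of the valences of $a$ and $b$ minus $2$. Since $a$ and $b$ are each stable in $U$, either one of them carries positive weight (hence so does the merged vertex) or both have valence at least $3$ (hence the merged vertex has valence at least $4$). Thus the intermediate tree $U/e_1$ is always stably weighted, and the involution on ordered pairs of contractible edges is well-defined with no residual case analysis.

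Second, in the chain-map argument there is no ``glued vertex'': the composition $T \circ_i S$ identifies the output edge of $S$ with the $i$-th input edge of $T$ and deletes both leaf vertices, rather than merging them into a bivalent vertex (which would itself fail stability). Hence the internal vertices of $T \circ_i S$ are exactly those of $T$ together with those of $S$, and your bijection between edge-expansions of $T \circ_i S$ and terms of $(\partial T)\circ_i S + T \circ_i(\partial S)$ needs only those two cases.
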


        We denote the chain map on $X^{*,*}_*$ induced by $\circ_i$ as
        \[
            \phi_{i,j,n;\vv, \w} : X^{j - i + 1, \vv}_* \otimes X^{n + i - j, \w}_* \to X^{n, \vv + \w}_*
        \]
        for $1 \leq i < j \leq n$, and $\vv, \w \in \Lambda$. We also define the \emph{corolla with $n$ inputs and weight $\w$} to be the $n$-input tree with 1 interior vertex, which is decorated with weight $\w$. We denote this corolla by $\Psi_n^{\w}$ for each $n, \w$.

        We next define the space of \emph{stably weighted bimodule trees}. $XB^{n,j,\w}_*$, to be the subcomplex of $X^{*,*}_*$ consisting of trees $T$ with $n + 1 + j$ inputs and total weight $\w$. 

        \begin{lemma}\label{defs18}
            $X^{n,j,\w}_*$ is a chain complex under the action of $\del$, and the map $\circ_{n + 1}: XB^{n,j,\w}_* \otimes XB^{m, k, \vv}_* \to XB^{(n+m), (j + k),(\vv + \w)}$ and the maps 
            \begin{align*}
                \circ_i &: XB^{n, j, \w}_* \otimes X^{m, \vv} \to XB^{n + m - 1, j, \vv + \w}_* \text{ for } i \leq n\\
                \circ_i &: XB^{n, j, \w}_* \otimes X^{m, \vv} \to XB^{n, j + m - 1, \vv + \w}_* \text{ for } i > n + 1
            \end{align*}
            are all chain maps.
        \end{lemma}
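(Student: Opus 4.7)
The plan is to realize Lemma~\ref{defs18} as a routine corollary of Lemma~\ref{defs16}, by checking that the differential and the relevant composition operations on $X^{*,*}_*$ restrict correctly to the subcomplexes $XB^{n,j,\w}_*$. The key observation is that the assignment $T \mapsto (n,j,\w)$, recording the number of inputs to the left of the distinguished $(n+1)$-st input, the number to its right, and the total weight, is locally constant under each of the operations in question.

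First I would verify that $XB^{n,j,\w}_*$ is closed under $\partial$. The differential is a sum over edge expansions $(S,e)$ of $T$; such an expansion neither creates nor destroys leaves, and it preserves the weight function on vertices, so $S$ has the same number of inputs and total weight as $T$. In particular $S$ still has $n+1+j$ inputs and weight $\w$, and the position of the distinguished input is unchanged. Hence $\partial$ restricts to an endomorphism of $XB^{n,j,\w}_*$, and Lemma~\ref{defs16} already gives $\partial^2 = 0$ and that $\partial$ drops dimension by one.

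Next I would check the three composition maps. For $\circ_{n+1}: XB^{n,j,\w}_* \otimes XB^{m,k,\vv}_* \to XB^{n+m,j+k,\vv+\w}_*$, gluing the output of $S \in XB^{m,k,\vv}_*$ into the distinguished input of $T \in XB^{n,j,\w}_*$ produces a tree whose inputs, read left-to-right, consist of the $n$ left inputs of $T$, followed by the $m$ left inputs, distinguished input, and $k$ right inputs of $S$, followed by the $j$ right inputs of $T$; the new distinguished input is that of $S$, giving a tree in $XB^{n+m,j+k,\vv+\w}_*$ as claimed. The maps $\circ_i$ with $i\leq n$ or $i > n+1$ are handled analogously: inserting an $m$-input algebra tree at one of the $n$ left (resp.\ $j$ right) inputs of $T$ replaces that input by $m$ inputs on the same side of the distinguished input, so the result lies in $XB^{n+m-1,j,\vv+\w}_*$ (resp.\ $XB^{n,j+m-1,\vv+\w}_*$). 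That each of these restricted maps is a chain map is then inherited from the corresponding statement for $\circ_i$ on $X^{*,*}_*$ in Lemma~\ref{defs16}.

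There is no serious obstacle here; the whole argument is bookkeeping on where the distinguished input ends up after edge expansion or grafting. The one thing worth being careful about is the convention that the distinguished input of a bimodule tree is always the $(n+1)$-st one, which is what makes the statement of the lemma clean (and which forces the case distinction $i \leq n$ versus $i > n+1$, with $i = n+1$ being the bimodule composition treated separately).
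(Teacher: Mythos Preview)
Your proposal is correct. The paper states Lemma~\ref{defs18} without proof, treating it as immediate from the definition of $XB^{n,j,\w}_*$ as a relabelled copy of $X^{n+1+j,\w}_*$ together with Lemma~\ref{defs16}; your reduction to Lemma~\ref{defs16} via bookkeeping on the distinguished input is exactly the argument the paper is implicitly invoking.
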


        \subsection{Weighted algebra diagonals}

        The goal of this section is to define the notion of a weighted algebra diagonal, which will be used to define tensor products of weighted $\A_{\infty}$-algebras. 

        First, we extend the dimension function to tensor products of trees additively, so $\dim (S\otimes T) = \dim S + \dim T$. For the constructions in this paper, we will always be working over polynomial ground rings. In order to construct reasonable bimodules and products, we make the following compatibility restriction:

         \begin{definition}\label{defs2}
            Compatible weight spaces and polynomial ground rings
        \end{definition}

        \noindent Let $R_0$ be some ring of coefficients. Consider polynomial weight spaces $R_1 = R_0[V_1, \ldots, V_m]$ and $R_2 = R_0[W_1, \ldots, W_n]$, and weight spaces $\Lambda_1 = \Z_{\geq 0} \langle \e_1, \ldots, \e_r\rangle$ and $\Lambda_2 = \Z_{\geq 0} \langle \f_1, \ldots, \f_s \rangle$. Then the pairs $(R_1, \Lambda_1)$ and $(R_2, \Lambda_2)$ are said to be compatible if and only if $r = n$ and $s = m$.

        When we need to specify the ground ring and lattice pair $(R, \Lambda)$ used to define the chain complex $X^{*,*}_*$ we will write it as $X^{*,*}_{*,(R,\Lambda)}$. 

        Consider a pair $(R_1, \Lambda_1), (R_2, \Lambda_2)$ of compatible weight spaces and polynomial ground rings. We view $X^{*,*}_{*, (R_1, \Lambda_1)} \otimes X^{*,*}_{*, (R_2, \Lambda_2)}$ as a module over $R = \F[V_1, \ldots, V_m, W_1, \ldots, W_n]$. For $S \otimes T \in X^{*,*}_{*, (R_1, \Lambda_1)} \otimes X^{*,*}_{*, (R_2, \Lambda_2)}$, we define
        \begin{align*}
            \wt_1(V_1^{s_1} \cdots V_m^{s_m} W_1^{t_1} \cdots W_n^{t_n} \cdot S \otimes T) &= \wt(S) +  \sum_{i = 1}^{m} s_i \f_i; \\
            \wt_2(V_1^{s_1} \cdots V_m^{s_m} W_1^{t_1} \cdots W_n^{t_n} \cdot S \otimes T) &= \wt(T) + \sum_{i = 1}^n t_i \e_i;
        \end{align*}

        Next, we extend $X^{*,*}_*$ to a complex $\tilde{X}^{*,*}_*$, which has the same generating set as $X^{*,*}_*,$ plus the 0-input tree $\top$ and the 1-input tree $\downarrow$. Both of these trees have dimension 0 according to~\eqref{defs12} (with the convention that $\top$ has $-1$ vertices and $\downarrow$ has 0 vertices). They interact with other trees in the following way:
        \begin{itemize}
            \item $\downarrow \circ_i T$ and $T \circ_i \downarrow$ are equal to $T$, for any $T \in \tilde{X}^{*,*}_*$;

            \item $\top \circ_i T \equiv 0$ for all $T \in X^{*,*}_*$ and $i$, and $T \circ_i \top$ is the tree obtained from $T$ by deleting the $i$-th input leaf;
        \end{itemize}

        Fix a pair $(R_1, \Lambda_1), (R_2, \Lambda_2)$ of compatible polynomial ground rings and weight spaces, notated as in Definition~\ref{defs2}. Write $R = R_0[V_1, \ldots, V_m, W_1, \ldots, W_n]$ as above, and write $\Lambda = \Lambda_1 + \Lambda_2$. We define a \emph{weighted seed $s = (s_1, \ldots, s_{m + n}$ for a weighted algebra diagonal} to be such that
        \begin{itemize}
             
             \item $s_i$ is a linear combination of  $\{W_i \Psi_0^{\e_i} \otimes \top\}_{j = 1}^{n}$, for each $1 \leq i \leq n$);
             
             \item $s_i$ is a linear combination of $\{V_i \top \otimes \Psi_0^{\f_i}\}_{j = 1}^m$, for each $n + 1 \leq i \leq m + n$;
        \end{itemize}

        \begin{definition}\label{defs20}
            Weighted algebra diagonal $\Gamma^{*,*}$ 
        \end{definition}

        \noindent A weighted algebra diagonal with seed $s$ is a map
        \begin{equation}\label{defs21}
            \Gamma^{n,\w}: X^{n, \w}_{*,(R,\Lambda)} \to \bigoplus_{{\tiny \begin{matrix}
                \w_1, \w_2 \leq \w \\
                \w_1 \in \Lambda_1 \\
                \w_2 \in \Lambda_2
            \end{matrix}}} \tilde{X}^{n, \w_1}_{*, (R_1, \Lambda_1)} \otimes \tilde{X}^{n, \w_2}_{*, (R_2, \Lambda_2)}
        \end{equation}
        satisfying the following conditions
        \begin{itemize}
            \item \textbf{Dimension preservation:} $\dim \Gamma^{n,\w}(T) = \dim T$ for each $n, \w,$ and $T \in  X^{n, \w}_{*,(R,\Lambda)}$;

            \item \textbf{Weight preservation:} For each $n, \w$, and $T \in  X^{n, \w}_{*,(R,\Lambda)}$,
            \[
                \wt_1(\Gamma^{n, \w}(T)) = \wt_2(\Gamma^{n, \w}(T)) = \wt(T) = \w;
            \]

            \item \textbf{Stacking:}
            \[
                \Gamma^{n, \vv + \w} \circ \phi_{i,j,n; \vv, \w} = \sum_{{\tiny \begin{matrix}
                    \vv_1 + \vv_2 = \vv \\
                    \w_1 + \w_2 = \w
                \end{matrix}}} (\phi_{i,j,n;\vv_1, \w_1} \otimes \phi_{i,j,n; \vv_2, \w_2}) \circ (\Gamma^{j - i + 1, \vv }\otimes \Gamma^{n + i - j, \w}.
            \]
            \item \textbf{Nondegeneracy:} $\Gamma^{*,*}$ is non-degenerate in the following sense:
            \begin{itemize}
                \item $X^{2,0}_*$ has a canonical generator, $\Psi_2^0$. We require that $\Gamma^{2,0}(\Psi_2^0) = \Psi_2^0 \otimes \Psi_2^0$;

                \item $\Gamma^{0,\e_i}(\Psi_0^{\e_i}) = s_i$ for each $1 \leq i \leq n$;

                \item $\Gamma^{0, \f_i}(\Psi_0^{\f_i}) = s_i$ for each $n + 1 \leq i \leq n + m$;

                \item For each $n, \w$, $\Gamma^{n, \w}$ has image in
                \[
                    \bigoplus_{{\tiny \begin{matrix}
                \w_1, \w_2 \leq \w \\
                \w_1 \in \Lambda_1 \\
                \w_2 \in \Lambda_2
            \end{matrix}}} (\tilde{X}^{n, \w_1}_{*, (R_1, \Lambda_1)} \otimes {X}^{n, \w_2}_{*, (R_2, \Lambda_2)}) \oplus ( {X}^{n, \w_1}_{*, (R_1, \Lambda_1)} \otimes \tilde{X}^{n, \w_2}_{*, (R_2, \Lambda_2)}),
                \]
                that is, for each $T$, each term of $\Gamma^{n,\w}(T)$ contains at most one factor which is one of $\top$ or $\downarrow$. 
            \end{itemize}
        \end{itemize}

        The proof that such diagonals exist is completely analgous to the proof from Section 6.2 of~\cite{DiagBible}.

       \subsection{Weighted bimodule diagonal primitives}\label{prim}
        In order to define the operations for a box tensor product, we will need the notion of a weighted bimodule diagonal primitive. This is inspired by the definition of a weighted module diagonal primitive -- see page 113 of~\cite{DiagBible}. In order to make this definition, we need a number of auxiliary definitions relating to concatenation of trees. 

        First, for $(T_1, S_1), (T_2, S_2) \in X^{*,*} \otimes XB^{*,*,*},$ 
        \[
            (T_2, S_2) \circ_{L,i} (T_1, S_1) = (T_2 \circ_i T_1, S_2 \circ_i S_1)
        \]
        and
        \[
            (T_2, S_2) \circ_{L} (T_1, S_1) = \sum_i (T_2, S_2) \circ_{L,i} (T_1, S_1).
        \]
        Likewise, for $(T_1, S_1), (T_2, S_2) \in X^{*,*} \otimes XB^{*,n, *},$ define
        \[
             (T_2, S_2) \circ_{R,i} (T_1, S_1) = (T_2 \circ_i T_1, S_2 \circ_{i + n + 1} S_1)
        \]
        and 
        \[
            (T_2, S_2) \circ_{R} (T_1, S_1) = \sum_i (T_2, S_2) \circ_{R,i} (T_1, S_1).
        \]
        Finally, if $T_0 \in X^{n,*}$ and $T_1, \ldots, T_n \in X^{*,*}$, let $T \circ (T_1, \ldots, T_n)$ be the tree obtained by gluing the input of $T_i$ to the $i$-th input of $T$, for each $i$
        Define $\RoJ: \underbrace{X^{*,*}_* \otimes \cdots \otimes X^{*,*}_*}_{m \text{ times}} \to X^{*,*}_*$ as
        \[
            \RoJ^{\w}(T_1, \ldots, T_m) = \Psi_m^{\w} \circ (T_1, \ldots, T_m)
        \]
        and
        \[
            \RoJ^* = \sum_{\w} \RoJ^{\w}.
        \]
        Let $\CeJ: \underbrace{XB^{*,*,*}_* \otimes \cdots \otimes XB^{*,*,*}_*}_{m \text{ times}} \to XB^{*,*,*}_*$ in the following way. For stably weighted trees $T_1, \ldots, T_m$ with $T_i \in \in X^{n_i, j_i, \w_i}_*$ for each $i$, let
        $\CeJ(T_1, \ldots, T_m)$ be the tree in $XB^{n, j, \w}$ obtained by gluing the output leaf of $T_{i - 1}$ to the $(n_i + 1)$st input leaf of $T_i$, for each $2 \leq i \leq m$, where we write 
        \begin{align*}
            n &= \sum_{i = 1}^m n_i, \\
            j &= \sum_{i = 1}^m j_i, \\
            \w &= \sum_{i = 1}^m \w_i.
        \end{align*} 
        We can then extend $\CeJ$ linearly to all of $XB^{*,*,*}_* \otimes \cdots \otimes XB^{*,*,*}_*.$

        Define $\CR^{\w}: (X^{*,*}_* \otimes XB^{*,*,*}_*)^{\otimes m} \to X^{*,*} \otimes XB^{*,*,*}_*$ as
        \[
            \CR((S_1, T_1), \ldots, (S_m, T_m)) = \RoJ^{\w}(S_1, \ldots S_m) \otimes \CeJ(T_1, \ldots, T_m),
        \]
        extending multilinearly. Let
        \[
            \CR^* = \sum_{\w} \CR^{\w}.
        \]

        We are now ready to give the definition of a weighted bimodule diagonal primitive.

        \begin{definition}\label{defs8}
            Weighted bimodule diagonal primitive $\p^{*,*,*}$ compatible with a weighted algebra diagonal $\Gamma^{*,*}$
        \end{definition}

       \noindent A weighted bimodule diagonal primitive is a collection of linear combinations of weighted trees
        \begin{equation}\label{defs9}
            \p^{n,j,\w} \in \bigoplus_{\w_1 + \w_2 \leq \w} X^{n, \w_1} \otimes XB^{n,j, \w_2}
        \end{equation}
        satisfying the following properties:
        \begin{itemize}
            \item \textbf{Compatibility with $\Gamma^{*,*}$}: For $n, j \geq 0$ with $(n,j) \notin \{(1,0), (0,1)\}$, 
            \begin{equation}\label{defs10}
                \del \p^{n,j,\w} = \sum_{{\tiny \begin{matrix}
                    \vv +\sum \w_i = \w \\
                    \sum n_i = n + k - 2\\
                    \sum j_i = j + k - 2
                \end{matrix}}} \CR^{\vv}(\p^{n_1, j_1, \w_1}, \ldots, \p^{n_k,j_k, \w_k}) + \sum_{{\tiny \begin{matrix}
                    \w_1 + w_2 = \w \\
                    n_1 + n_2 = n
                \end{matrix}}} \p^{n_2, j, \w_2} \circ_{L} \gamma^{n_1, \w_1} + \sum_{{\tiny \begin{matrix} \w_1 + \w_2 = \w \\ j_1 + j_2 = j \end{matrix}}} \p^{n, j_2, \w_2} \circ_R \Psi_{j_1}^{\w_1}
            \end{equation}

            \item \textbf{Base cases:}
            \begin{center}
              $\p^{1,0,0} =$  \hspace{10pt} \parbox{2cm}{\includegraphics[width = 1.5cm]{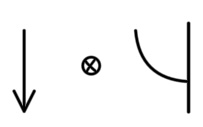}} 
              
                $\p^{0,1,0} = $  \parbox{2cm}{\includegraphics[width = 2cm]{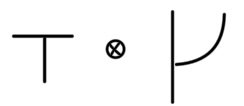}}
                
                $\p^{0,0,\e_i}= $  \parbox{2cm}{\includegraphics[width = 2cm]{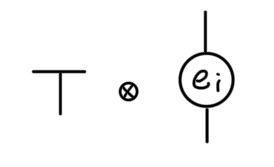}}
            \end{center}
        \end{itemize}

        Before we can prove the existence of weighted bimodule diagonal primitives, we need the following computational lemma. Throughout the following, we write $(S,T)$ as a shorthand for $S \otimes T$, so that $\dim(S,T) = \dim (S \otimes T)$.

        \begin{lemma}\label{defs27}
            \begin{enumerate}[label = (\alph*)]
                \item For any $(S_1, T_1), \ldots, (S_k, T_k) \in X^{*,*}_* \otimes XB^{*,*,*}_*$ with $\dim (S_i, T_i) = d_i$, for each $i$,tcul
                \begin{equation}\label{defs28}
                    \dim \CR^{\vv}((S_1, T_1), \ldots, (S_k, T_k)) = \sum_{i = 1}^k d_i + 2|\vv| + k - 2
                \end{equation}

                \item\label{defs32} $\dim ( (S, T) \circ_L (S', T')) = \dim (S,T) + \dim(S',T')$;

                \item $\dim( (S,T) \circ_R S') = \dim (S,T) + \dim S'$;

                \item\label{defs31} For any $(S_1, T_1), \ldots, (S_k, T_k) \in X^{*,*}_* \otimes XB^{*,*,*}_*$,
                \begin{align*}
                    \del \CR^{\vv}((S_1, T_1), \ldots, (S_k, T_k)) = &\sum_{i = 1}^k \CR^{\vv}((S_1, T_1), \ldots, \del (S_i, T_i), \ldots, (S_k, T_k)) \\
                    &+ \sum_{\tiny \begin{matrix}1 \leq i \leq i' \leq k \\ \vv_1 + \vv_2 = \vv \end{matrix}} \CR^{\vv_2}( (S_1, T_1), \ldots, \CR^{\vv_1}((S_i,T_i), \ldots, (S_{i'}, T_{i'})), \ldots, (S_k, T_k));
                \end{align*}

                \item\label{defs35} For $(S_1, T_1), \ldots, (S_k, T_k), (S,T) \in X^{*,*}_* \otimes X^{*,*,*}_*$, 
                \begin{equation}\label{defs30}
                    \CR^{\vv}((S_1, T_1), \ldots, (S_k, T_k)) \circ_L (S, T) = \sum_{i = 1}^k \CR^{\vv}(\cdots, (S_i, T_i) \circ_L (S, T),\cdots);
                \end{equation}

                \item\label{defs36} For $(S_1, T_1), \ldots, (S_k, T_k) \in X^{*,*}_* \otimes X^{*,*,*}_*$ and $S \in X^{*,*}_*$
                \begin{equation}\label{defs311}
                    \CR^{\vv}((S_1, T_1), \ldots, (S_k, T_k)) \circ_R S = \sum_{i = 1}^k \CR^{\vv}(\cdots, (S_i, T_i) \circ_R S,\cdots);
                \end{equation}

                \item For $(S,T), (S',T') \in X^{*,*}_* \otimes X^{*,*,*}_*$ and $S'' \in X^{*,*}_*$, we have
                \[
                    ((S,T) \circ_L (S',T')) \circ_R S'' = ( (S,T) \circ_R S'' ) \circ_L (S', T')
                \]

                \item $\del ((S,T) \circ_L (S',T')) = \del (S, T) \circ_L (S', T') + (S,T) \circ_L \del(S',T')$;

                \item\label{defs33} $\del ((S,T) \circ_R S') = \del (S, T) \circ_R S' + (S,T) \circ_R \del S'$;

                \item\label{defs34} $\del$ of the right hand side of~\eqref{defs10} is zero;
            \end{enumerate}
        \end{lemma}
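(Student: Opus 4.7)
The plan is to dispatch parts (a)--(i) as essentially formal consequences of the dimension formula~\eqref{defs12} and the fact that the differential $\del$ acts by edge expansions, so that interaction with grafting and with corolla insertion becomes bookkeeping. Part (j) is the only place where the particular form of~\eqref{defs10} matters, and it is where I expect the main work of the lemma to lie.

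For part (a), I would unpack $\CR^{\vv}((S_1,T_1),\ldots,(S_k,T_k))$ as $\RoJ^{\vv}(S_1,\ldots,S_k)\otimes \CeJ(T_1,\ldots,T_k)$ and separately compute input counts and vertex counts. On the algebra side, $\RoJ^{\vv}(S_1,\ldots,S_k) = \Psi_k^{\vv}\circ(S_1,\ldots,S_k)$ has $\sum_i n_i$ inputs, $1 + \sum_i v_i$ internal vertices, and total weight $\vv + \sum_i \wt(S_i)$. Plugging into~\eqref{defs12} and comparing with $\sum_i \dim S_i$ produces a surplus of $2|\vv| + k - 2$. On the bimodule side, $\CeJ$ glues outputs to module inputs, neither creating nor destroying internal vertices, and the leaf bookkeeping makes $\dim$ additive in the $T_i$'s. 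Combining these yields (a). Parts (b) and (c) are the special cases of this same computation for a single graft at a leaf, where one input and one output are identified and vertex counts simply add.

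For parts (d)--(i), I would package the observations that (i) an edge expansion of a grafted tree $A\circ_i B$ either lives inside $A$ or inside $B$ (one cannot insert a new edge at the gluing leaf without producing an unstable vertex), and (ii) the $i$-th input of $\CR^{\vv}(\ldots)$ belongs to a unique factor $(S_{i_0},T_{i_0})$, so grafting at this input touches only that factor. Part (d) is then the Leibniz rule $\del(A\otimes B)=\del A\otimes B + A\otimes \del B$ applied to the $\RoJ^{\vv}\otimes\CeJ$ factorization: edge expansions of the central corolla $\Psi_k^{\vv}$ account precisely for the iterated--$\CR$ terms via splittings $\Psi_{k-(i'-i)}^{\vv_2}\circ_i\Psi_{i'-i+1}^{\vv_1}$ with $\vv_1+\vv_2=\vv$, while all other expansions are absorbed by some $\del (S_i,T_i)$. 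Parts (e) and (f) follow from observation (ii) together with the definitions of $\circ_L$ and $\circ_R$; part (g) because $\circ_L$ and $\circ_R$ act on disjoint portions of the input set of the bimodule tree; and parts (h), (i) from observation (i).

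Part (j) is the main obstacle. My plan is to apply $\del$ to each of the three sums on the right-hand side of~\eqref{defs10} using (d), (h), (i), and then substitute $\del\p^{n_i,j_i,\w_i}$ by~\eqref{defs10} itself wherever it appears, i.e.\ to argue inductively. The differential of the first sum produces two families: a ``$\del \p$ inside $\CR^{\vv}$'' family, which after recursive substitution breaks into $\CR$-of-$\CR$, $\CR$-of-$\circ_L$, and $\CR$-of-$\circ_R$ pieces, and a ``nested $\CR$'' family coming from edge expansions of the central corolla. The two types of iterated--$\CR$ contributions cancel in pairs via the coassociativity-style reindexing $\vv_1+\vv_2=\vv$, $k_1+k_2=k+1$. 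Meanwhile, $\del$ of the second sum, via (h), produces $\del\p\circ_L \gamma^{n_1,\w_1}$ and $\p\circ_L\del\gamma^{n_1,\w_1}$; the first couples with $\CR^{\vv}$-of-$\circ_L$ terms and with $\circ_L$-of-$\circ_L$ terms from the first sum, while the second uses the stacking axiom of the algebra diagonal $\Gamma^{*,*}$ to supply the remaining cross terms. Part (i) applied to the third sum gives $\del\p\circ_R\Psi_{j_1}^{\w_1}+\p\circ_R\del\Psi_{j_1}^{\w_1}$, and $\del\Psi_{j_1}^{\w_1}$ is the standard associativity sum of corolla expansions that matches the $\circ_R$-of-$\circ_R$ leftovers. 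The hard part is the bookkeeping: making an explicit bijection between the differential-of-RHS terms so that, over $\F_2$, every term appears an even number of times. I would set this up by fixing once and for all a combinatorial model for each edge-expansion of each of the three structural pieces, writing a table of cancellations, and then invoking parts (d)--(i) and the stacking axiom of $\Gamma^{*,*}$ as the justification for each row.
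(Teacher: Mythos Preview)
Your proposal is correct and follows essentially the same route as the paper: for (a) you compute $\dim\RoJ^{\vv}$ and $\dim\CeJ$ separately and add, the paper declares (b)--(i) ``obvious from the definitions'' (your edge-expansion/Leibniz reasoning is exactly what is meant), and for (j) both you and the paper apply $\del$ to each of the three sums in~\eqref{defs10} via (d), (h), (i), substitute~\eqref{defs10} recursively for each $\del\p^{n_i,j_i,\w_i}$, and then match terms in pairs over $\F_2$. One small correction to your informal bookkeeping: the $(\p\circ_L\gamma')\circ_L\gamma$ terms do not cancel against anything from the $\CR^{\vv}$ sum; they cancel against the $\p\circ_L(\gamma'\circ\gamma)$ terms arising from $\p\circ_L\del\gamma$ (this is where your stacking-axiom remark enters), since any iterated $\circ_L$ that is not already nested as $\circ_L(\gamma\circ\gamma)$ appears twice---and the analogous statement holds on the $\circ_R\Psi$ side.
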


        \begin{proof}
            For (a), note that if we have $S_1, \ldots, S_k \in X^{*,*}_*$ such that each $S_i$ has $n_i$ inputs, $v_i$ internal vertices, and total weight $\w_i$,
            \begin{align*}
                \dim \RoJ^{\vv}(S_1, \ldots, S_k) &= \sum n_i + 2 \sum |\w_i| + 2 |\vv| - \sum v_i - 2 \\
                &= \sum (n_i + 2 |\w_i| - v_i - 1) + 2 |\vv| + k - 2 \\
                &= \sum \dim S_i + 2|\vv| + k - 2.
            \end{align*}
            Likewise, for $T_1, \ldots, T_n \in X^{*,*,*}_*$ such that each $T_i \in X^{n_i,j_i,\w_i'}_*$ and has $v_i'$ internal vertices, we have
            \begin{align*}
                \dim \CeJ(T_1, \ldots, T_k) &= \sum n_i + \sum j_i + 1 + 2 \sum |\w_i'| - \sum v_i' - 1 \\
                &= \sum (n_i + j_i + 1 + 2 |\w_i| - v_i -1) \\
                &= \sum \dim T_i
            \end{align*}
            Thus
            \begin{align*}
                \dim \CR^{\vv}((S_1, T_1), \ldots, (S_k, T_k)) &= \sum ( \dim S_i + \dim T_i) + 2 |\vv| + k - 2 \\
                &= \sum d_i + 2 |\vv| + k - 2,
            \end{align*}
            as desired. Parts~\ref{defs32}-~\ref{defs33} are obvious from the definitions.

            For~\ref{defs34}, we work term by term. By~\ref{defs31},~\ref{defs35}, and~\ref{defs36}, and~\eqref{defs10},
            \begin{align*}
                 \del \CR^{\vv}(\p^{n_1, j_1,\w_1}, \ldots, \p^{n_k,j_k,\w_k}) &= \sum_{i = 1}^k \CR^{\vv}( \cdots, \del \p^{n_i, j_i, \w_i}, \cdots) \\
                & \qquad + \sum_{\tiny \begin{matrix} 1 \leq i \leq i' \leq k \\ \vv_1 + \vv_ = \vv \end{matrix}} \CR^{\vv_2} (\cdots, \CR^{\vv_1}(\p^{n_i, j_i, \w_i}, \ldots, \p^{n_{i'},j_{i'}, w_{i'}}), \cdots ) \\
                &=  \sum_{\tiny \begin{matrix} 
                1 \leq i \leq k \\
                n_i' + n_i'' = n_i + 1 \\ 
                \w_i' + \w_i'' = \w_i 
                \end{matrix}} \CR^{\vv}( \cdots , \p^{n_i'', j_i, \w_i''}, \cdots) \circ_L \gamma^{n_i', \w_i'}\\
                & \qquad + \sum_{\tiny \begin{matrix} 1\leq i \leq k \\ j_i' + j_i'' = j_i + 1 \\ \w_i' + \w_i'' = \w_i \end{matrix}} \CR^{\vv}( \cdots , \p^{n_i, j_i'', \w_i''}, \cdots) \circ_R \Psi_{j_i'}^{\w_i'}\\
                & \qquad + 2 \cdot \sum_{\tiny \begin{matrix} 1 \leq i \leq i' \leq k \\ \vv_1 + \vv_ = \vv \end{matrix}} \CR^{\vv_2} (\cdots, \CR^{\vv_1}(\p^{n_i, j_i, \w_i}, \ldots, \p^{n_{i'},j_{i'}, w_{i'}}), \cdots ) 
            \end{align*}
            Likewise
            \begin{align*}
                \del (\p^{n_2, j, \w_2} \circ_L \gamma^{n_1, \w_1}) &= (\del \p^{n_2, j, \w_2}) \circ_L \gamma^{n_1, \w_1} + \p^{n_2, j, \w_2} \circ_L (\del \gamma^{n_1, \w_1}) \\
                &= \sum_{\tiny \begin{matrix}
                    \sum n_i' = n_1 \\
                    \sum j_i = j \\
                    \sum \w_i' = \w_2 + \vv
                \end{matrix}} \CR^{\vv}(\p^{n_1', j_1, \w_1'}, \ldots, \p^{n_k', j_k, \w_k'}) \circ_L \gamma^{n_1, \w_1} \\
                & \qquad + \sum_{\tiny \begin{matrix}
                    n_1' + n_2' = n_2 \\
                    \w_1' + \w_2' = \w_2
                \end{matrix}} \p^{n_2', j, \w_2'} \circ_L \gamma^{n_1', \w_1'} \circ_L \gamma^{n_1, \w_1} \quad \text{\color{red} (*)} \\
                & \qquad + \sum_{\tiny \begin{matrix}
                    j_1 + j_2 = j \\
                    \w_1' + \w_2' = \w_2
                \end{matrix}} \p^{n_2, j_2, \w_2'} \circ_R \Psi_{j_1}^{\w_1'} \circ_{L} \gamma^{n_1, \w_1} \\
                & \qquad + \sum_{\tiny \begin{matrix}
                    n_1' + n_2' = n_1 \\
                    \w_1' + \w_2' = \w_1
                \end{matrix}} \p^{n_2, j, \w_2} \circ_L ( \gamma^{n_2', \w_2'} \circ \gamma^{n_1', \w_1'}) \quad \text{\color{red} (**)}
            \end{align*}
            and
            \begin{align*}
                \del ( \p^{n, j_2, \w_2} \circ_R \Psi_{j_1}^{\w_1}) &= (\del \p^{n, j_2, \w_2}) \circ_R \Psi_{j_1}^{\w_1} + \p^{n, j_2, \w_2} \circ_R (\del \Psi_{j_1}^{\w_1}) \\
                &= \sum_{\tiny \begin{matrix} 
                \sum n_i = n \\ 
                \sum j_i' = j_2 \\
                \sum \w_i' + \vv = \w_2 \\
                \end{matrix}} \CR^{\vv}(\p^{n_1, j_1', \w_1'}, \ldots, \p^{n_k, j_k', \w_k'}) \circ_R \Psi_{j_1}^{\w_1}\\ 
                & \qquad + \sum_{\tiny \begin{matrix}
                    n_1 + n_2 = n + 1 \\
                    \w_1' + \w_2' = \w_2
                \end{matrix}} \p^{n_2, j_2, \w_2'} \circ_L \gamma^{n_1, \w_1'} \circ_R \Psi_{j_1}^{\w_1} \\
                & \qquad + \cdot  \sum_{\tiny \begin{matrix}
                    j_1' + j_2' = j_2 \\
                    \w_1' + \w_2' = \w_2
                \end{matrix}} \p^{n, j_2', \w_2'} \circ_R \Psi_{j_1'}^{\w_1'} \circ_R \Psi_{j_1}^{\w_1} \quad \text{ \color{blue} (*)} \\
                & \qquad + \sum_{\tiny \begin{matrix}
                    j_1' + j_2' = j_1 \\
                    \w_1' + \w_2' = \w_1
                \end{matrix}} \p^{n, j_2, \w_2} \circ_R (\Psi_{j_2'}^{\w_2'}\circ \Psi_{j_1'}^{\w_1'}) \quad \text{ \color{blue} (**)}
            \end{align*}
            Then, taking the sum of each of these three over the appropriate indices, as on the right hand side of~\eqref{defs10}, the terms cancel in pairs. The only interesting points are the cancellations of the pairs of terms labeled {\color{red} (*) / (**)} and {\color{blue} (*) / (**)}, respectively. The red pair cancels because any term of the form $\cdots \circ_L \gamma^{**} \circ_L \gamma^{**}$ which is not of the form $\circ_L (\gamma^{**} \circ \gamma^{**})$ appears twice. Likewise blue pair cancels because any term of the form $\cdots \circ_R \Psi_*^*\circ_R \Psi_*^*$ which is not of the form $\cdots \circ_R(\Psi_*^* \circ \Psi_*^*)$ appears twice.
        \end{proof}

        \begin{proposition}\label{defs11}
            There exists a weighted bimodule diagonal primitive $\p^{*,*,*}$ satisfying the conditions above.
        \end{proposition}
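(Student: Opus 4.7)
The plan is to construct $\p^{n,j,\w}$ by induction on the pair $(|\w|, n + j)$ in lexicographic order. The three base cases $\p^{1,0,0}$, $\p^{0,1,0}$, and $\p^{0,0,\e_i}$ are prescribed in Definition~\ref{defs8}, so it suffices to handle the inductive step. Fix $(n, j, \w)$ beyond the base cases, assume $\p^{n', j', \w'}$ has been constructed for every strictly smaller triple, and let $Z^{n, j, \w}$ denote the full right-hand side of~\eqref{defs10}. A quick inspection of the summation constraints confirms that every primitive appearing in $Z^{n, j, \w}$ is indeed lexicographically smaller than $(n, j, \w)$: the total weight drops whenever the top vertex $\Psi_k^{\vv}$ carries a nontrivial weight, while for $\vv = 0$ at least one of the factors $\p^{n_i, j_i, \w_i}$ is forced to have $n_i + j_i$ strictly smaller than $n+j$ (after discarding the degenerate corolla cases, which are not stably weighted trees). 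Hence $Z^{n, j, \w}$ is well-defined.

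The first task is to observe that $Z^{n, j, \w}$ is a cycle, $\del Z^{n, j, \w} = 0$; this is precisely part~\ref{defs34} of Lemma~\ref{defs27}, proved just above by matching boundary contributions in pairs. The main task is then to exhibit a $\del$-preimage of $Z^{n,j,\w}$: any such preimage may be declared to be $\p^{n, j, \w}$, and the inductive step is complete. Existence of a preimage follows from acyclicity (in the dimension occupied by $Z^{n,j,\w}$) of the target complex
\[
    \bigoplus_{\w_1 + \w_2 \leq \w} X^{n, \w_1}_{*, (R_1, \Lambda_1)} \otimes XB^{n, j, \w_2}_{*, (R_2, \Lambda_2)}.
\]
The dimension accounting from parts~(a) and~\ref{defs32}--\ref{defs33} of Lemma~\ref{defs27} guarantees that $Z^{n,j,\w}$ lives in the graded piece one below that of $\p^{n,j,\w}$, so a preimage, if it exists, lies in the correct dimension.

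I expect the main obstacle to be establishing this acyclicity statement; once it is in hand, the remainder is a routine induction. However, acyclicity is a standard fact about tree complexes, and the argument is exactly the one used in Section~6.2 of~\cite{DiagBible} for the construction of weighted algebra diagonals and weighted module diagonal primitives: one builds an explicit contracting homotopy on $X^{*,*}_*$ by inserting or contracting a distinguished edge adjacent to the output. The only adjustment needed here is to accommodate the bimodule tree factor $XB^{n, j, \w_2}_*$, but since this is just a subcomplex of $X^{n + j + 1, \w_2}_*$ (by Lemma~\ref{defs18}) in which the $(n + 1)$-st input is distinguished, the homotopy from \emph{loc.\ cit.}\ applies with only notational changes. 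Thus the whole argument reduces to a careful verification that the DiagBible homotopy respects the extra bimodule bookkeeping, plus the induction just described.
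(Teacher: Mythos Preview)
Your proposal is correct and follows essentially the same acyclic-models strategy as the paper: show the right-hand side of~\eqref{defs10} is a cycle (Lemma~\ref{defs27}\ref{defs34}), check dimensions, and invoke acyclicity of the tree complexes from~\cite{DiagBible} to produce a preimage. The only difference is presentational: the paper explicitly writes out the low-dimensional cases $\p^{2,0,0}$, $\p^{0,2,0}$, $\p^{1,1,0}$, $\p^{1,0,\e_i}$, $\p^{0,1,\e_i}$ by hand before appealing to the general inductive step, whereas you absorb these into the uniform induction and rely directly on the contracting homotopy.
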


        \begin{proof}
            The proof of this fact is by an acyclic models argument, using the dimension function from~\eqref{defs12}, above. 

            Note first that $\{\p^{n,0,0}\}_{n \geq 0}$ is just the weighted module diagonal primitive from Section 6.5 of~\cite{DiagBible}, so this part of the primitive exist by the arguments there.

            We will first prove the existence of an \emph{unweighted} bimodule diagonal primitive, that is, the terms $\{\p^{n,j,0}\}_{n, j}$. 

            We have as our base cases $\p^{1,0,0}$ and $\p^{0,1,0}$ from the non-degeneracy requirement above. Now,
            \begin{center}
                $\del \p^{2,0,0} = $ \vspace{.5cm} \parbox{6cm}{ \includegraphics[width = 6cm]{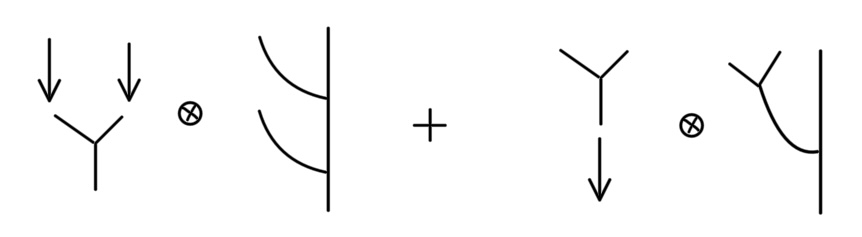}}
            \end{center}
            so that we can take
            \begin{center} 
                $\p^{2,0,0} = $ \parbox{2cm}{\includegraphics[width = 2cm]{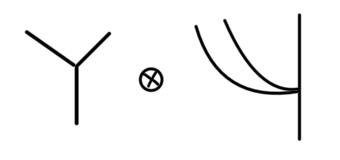}}
            \end{center}
            Likewise,
            \begin{center}
                $\del \p^{0,2,0} =$ \vspace{.5cm} \parbox{5cm}{\includegraphics[width = 5cm]{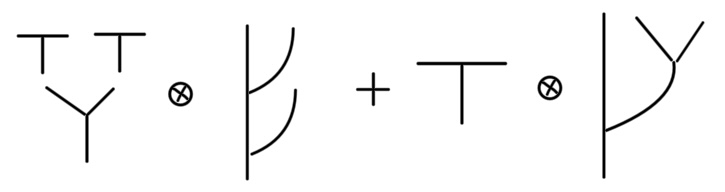}}
            \end{center}
            so we can take 
            \begin{center}
                $\p^{0,2,0} = $  \parbox{2cm}{\includegraphics[width = 2cm]{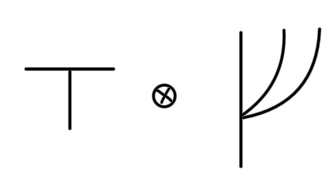}}
            \end{center}
            and
            \begin{center}
                $\del \p^{1,1,0}= $ \vspace{.5cm} \parbox{6cm}{\includegraphics[width = 6cm]{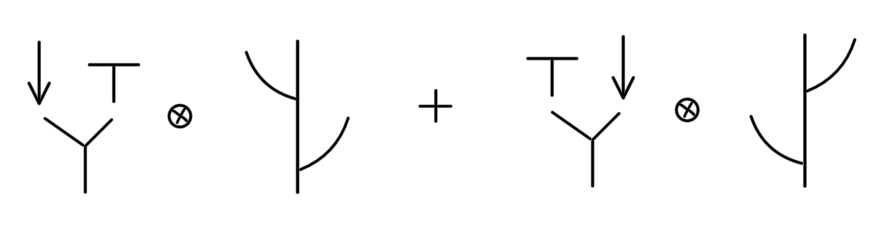}}
            \end{center}
            so we can take
            \begin{center}
                $\p^{1,1,0} = $ \parbox{2cm}{\includegraphics[width = 2cm]{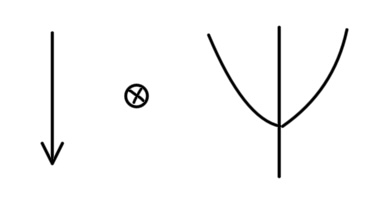}}
            \end{center}
            Notice that in each of the cases given above ($(n,j) = (1,0), (0,1), (2,0), (0, 2), (1,1)$), we have $\dim \p^{n,j,0} = n + j - 1$.
            
            Now, note that for each $n, j \geq 0$ (with $(n,j) \notin \{(1,0), (0,1)\}$, the terms on the right hand side of~\eqref{defs10} are all composed of $p^{n',j',\w'}$ such that at least one of $n' < n$, $j' < j$, and $\w' < \w$ is true. If we know  that 
            \begin{equation}\label{defs29}
                \dim \p^{n',j',\w'} = n' + j' + 2|\w' - 1 \text{ for each $(n',j',\w')$ with at least one of $n' < n$, $j'< j$, $\w' < \w$ true,}
            \end{equation}
           then it follows that the dimension of each of the terms on the right hand side of~\eqref{defs10} have dimension $n + j + 2|\w| - 2$. Indeed, given the hypothesis~\eqref{defs29},
            \begin{itemize}
                \item By Lemma~\ref{defs27}(a),
                \begin{align*}
                    \dim \CR^{\vv}( \p^{n_1, j_1,\w_1}, \ldots, \p^{n_k,j_k,\w_k}) &= \sum_{i = 1}^k \dim \p^{n_i, j_i, 0} + 2|\vv| k - 2 \\
                    &= \sum_{i = 1}^k (n_i + j_i + 2|\w_i| - 1) + 2|\vv|+ k - 2 \\
                    &= n + j + 2|\w| - 2
                \end{align*}
                since we are assuming $\sum n_i = n$, $\sum j_i = j$, and $\vv + \sum \w_i = \w$;

                \item By Lemma~\ref{defs27}(b),
                \begin{align*}
                    \dim (\p^{n_2,j, \w_2} \circ_L \gamma^{n_1, \w_1}) &= \dim \p^{n_2, j, \w_2} + \dim \gamma^{n_1, \w_1} \\
                    &= n_2 + j + 2|\w_2| - 1 + n_1 + 2|\w_1| - 2 \\
                    &= n + j + 2|\w| - 2
                \end{align*}
                since we are assuming $n_1 + n_2 = n + 1$ and $\w_1 + \w_2 = \w$;

                \item By Lemma~\ref{defs27}(c),
                \begin{align*}
                    \dim (\p^{n, j_2, \w_2} \circ_R \Psi_{j_1}^{\w_1}) &= \dim \p^{n, j_2, \w_2} + \dim \Psi_{j_1}^{\w_1} \\
                    &= n + j_2 + 2 |\w_2| - 1 + j_1 + 2|\w_1| - 2 \\
                    &= n + j + 2|\w| - 2,
                \end{align*}
                since we are assuming $j_1 + j_2 = j + 1$ and $\w_1 + \w_2 = \w$.
            \end{itemize}
            Now, the existence of an unweighted bimodule diagonal primitive follows from the base cases computed above (which are precisely those $\p^{n,j, 0}$ with $\dim \p^{n, j, 0} \in \{0, 1\}$) and from Lemma~\ref{defs27}~\ref{defs34} applied to the case of $\w = 0$. 

            We now construct aweighted bimodule diagonal primitive by induction on $\w$. By Lemma~\ref{defs27}~\ref{defs34} and the dimension counts above, it suffices to exhibit the inductive step for the $\p^{n,j,\w}$ with $\dim \p^{n,j,\w} = 2$ (one higher than the bottom dimension for weighted trees). According to the compatibility relation~\eqref{defs10},
            \begin{center}
                $\del \p^{1,0, \e_i} =$ \vspace{0.5cm}  \parbox{6cm}{\includegraphics[width = 6cm]{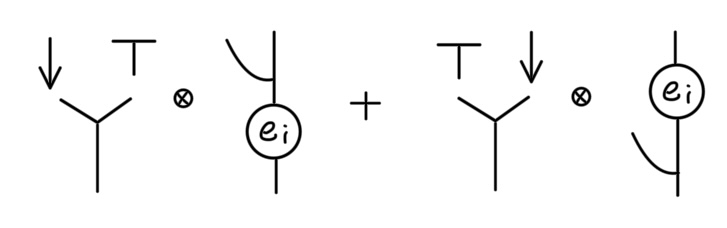}}
            \end{center}
            so we can take
            \begin{center}
                $\p^{1,0, \e_i} =$  \parbox{2cm}{\includegraphics[width = 2cm]{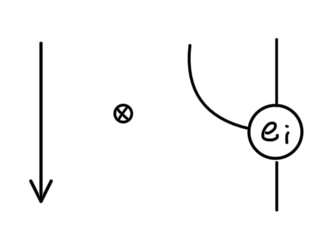}}
            \end{center}
            Likewise,
            \begin{center}
                $\del \p^{0,1, \e_i} =$ \vspace{0.5cm}  \parbox{6cm}{\includegraphics[width = 6cm]{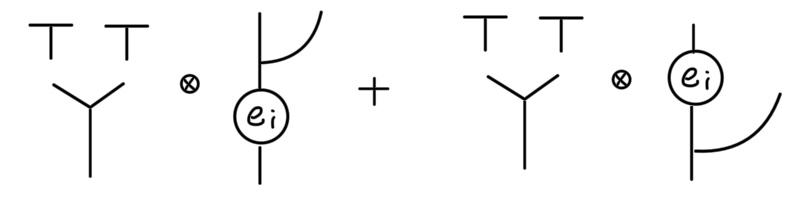}}
            \end{center}
            so we can take
            \begin{center}
                $\p^{0,1, \e_i} =$ \parbox{3cm}{\includegraphics[width = 3cm]{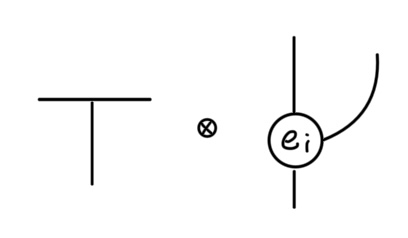}}
            \end{center}
            This completes the inductive step, and therefore the proof that there exists a weighted bimodule diagonal primitive compatible with the given weighted algebra diagonal $\Gamma^{*,*}$.
        \end{proof}
    
        \subsection{$\A_{\infty}$-algebras and -bimodules}\label{ainf}

        \begin{definition}\label{defs1}
            Weighted $\A_{\infty}$-algebras
        \end{definition}
        
        \noindent The data for a weighted $\A_{\infty}$-algebra is as follows:
        \begin{itemize}
            \item A ground ring $R$;

            \item A weight space $\Lambda$, consisting of $\Z_{\geq 0}$-linear combinations of some finite set of basic weight vectors $\{\e_i\}_{i = 1}^N$;

            For $\w = \sum_{i = 1}^N k_i e_i \in \Lambda$ define $|\w| = \sum_i k_i$; and for $\vv = \sum_{i = 1}^n \ell_i \e_i$, we say that $\vv \leq \w$ if and only if $k_i \leq \ell_i$ for each $i$.

            \item A chain complex $\A$ of $(R,R)$-bimodules;

            \item A collection of chain maps which are also $(R,R)$-bimodule homomorphims, $\mu_n^{\w}: \A^{\otimes n} \to \A$, for each $n \in \Z_{\geq 0}, \w \in \Lambda$, where $\A^{\otimes n}$ is equipped with the usual Koszul differential. We require that $\mu_1^0$ agrees with the differential on $\A$, and that the maps $\mu_n^{\w}$ satisfy the following \emph{$\A_{\infty}$-relations}: for each $n \in \Z_{\geq 0}, \w \in \Lambda$, and $a_1, \ldots, a_n \in \A$ 
            \begin{equation}\label{defs3}
                0 = \sum_{{\tiny \begin{matrix}0 \leq r \leq n \\ 0 \leq \vv\leq \w\end{matrix}}}\sum_{i = 1}^{n - r+1} \mu_{n - r + 1}^{\w - \vv} (a_1, \ldots, \mu_r^{\vv}(a_i, \ldots, a_{i + r - 1}), \ldots, a_n);
            \end{equation}
        \end{itemize}

        We can also write these $\A_{\infty}$-relations in a different form which will become useful in the discussion of trees, below. Define $\overline{D}^{\A, \w}: \TT^*\A \to \TT^*(A)$ by 
        \[
            \overline{D}^{\A, \w}(a_1 \otimes \cdots \otimes a_n) = \sum_{i + j \leq n + 1 } a_1 \otimes \dots a_{i - 1} \otimes \mu_j^{\w}(a_i,\ldots, a_{i + j - 1}) \otimes a_{i + j} \otimes \cdots \otimes a_n
        \]
        Then~\eqref{defs3} is equivalent to the stipulation that
        \begin{equation}\label{defs24}
            \sum_{\w_1 + \w_2 = \w}\overline{D}^{\A, \w_2} \circ \overline{D}^{\A, \w_1} \equiv 0
        \end{equation}
        for each $\w \in \Lambda$.

            In what follows, we will always be working over ground rings which contain a so-called \emph{ring of idempotents}. We therefore make the following definitions. 

        \begin{definition}\label{aux27}
            Ring of idempotents
        \end{definition}

        \noindent Let $R_0 = \F_2[ \I_1, \ldots, \I_N]$, where $\I_j$ are formal variables which interact int he following way:
         \[
            \I_i \I_j = \begin{cases}
                \I_i & i = j \\ 0 & \text{ otherwise}
            \end{cases}
         \]
         We call $R_0$ the ring of idempotents with $N$ generators, and make certain compatibility restrictions:

         \begin{definition}\label{aux28}
             Weighted $\A_{\infty}$-algebra with idempotents
         \end{definition}

         \noindent The data for an $\A_{\infty}$-algebra with idempotents is as follows:
         \begin{itemize}
             \item A weighted $\A_{\infty}$-algebra $\A$ with a ground ring $R$ which is a polynomial ring over $R_0$ (for some $N$), and some weight space $\Lambda$;

             \item We require that for each $a \in \A$, there exist unique $1 \leq i, j \leq N$ such that
             \[
                \I_i \cdot a = a \cdot \I_j = a
             \]
             and such that
             \[
                \I_{i'} \cdot a = a \cdot \I_{j'} = 0
             \]
             for each $i' \neq i$ and $j' \neq j$. We call $i$ the \emph{initial idempotent of $a$} and $j$ the \emph{final idempotent of $a$};

             \item We require that the $\A_{\infty}$-operations $\mu_n^{\w}$ are chain maps satisfying the $\A_{\infty}$-relations given above, and also the following additional requirement: $\mu_n^{\w}(a_1, \ldots, a_n) = 0$ unless the initial idempotent of $a_i$ is equal to the final idempotent of $a_{i - 1}$, for each $2 \leq i\leq n$;
         \end{itemize}

         \begin{remark}\label{aux30}
             \emph{Actually, the final condition follows from the existence of intial and final idempotents, because the $\mu_n^{\w}$ are required to be $R$-bimodule homomorphisms on the tensor product. But we state it separately for emphasis.}
         \end{remark}

        We can (and often will) use the operad of trees to induce an $\A_{\infty}$-algebra structure on a chain complex $\A$. More precisely, let $\A$ be a chain complex of $R$-modules, and suppose we are given maps $\mu_n^{\w}: \A^{\otimes n} \to \A$ for each $n, \w$. Consider the family of maps
        \begin{equation}\label{defs23}
            \mu: X^{n,\w}_* \to \mathrm{Mor}(\A^{\otimes n}, \A), \: n \in \Z_{\geq 0} \text{ and } \w \in \Lambda
        \end{equation}
        defined on the generating trees $T \in X^{n, \w}$ by replacing each vertex of $T$ with weight $\w$ and valence $n + 1$ with $\mu_n^{\w}$, and then composing according to the edges of $T$. Then:

        \begin{lemma}\label{defs22}
            \emph{(Lemma 4.12 from~\cite{DiagBible})}$\A$ is an $\A_{\infty}$-algebra with operations $\{\mu_n^{\w}\}_{n, \w}$, if and only if the maps $\mu$ from~\eqref{defs23} are chain maps for each $n, \w$.
        \end{lemma}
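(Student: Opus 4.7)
The plan is to reduce the statement to a check on the single-vertex trees (corollas) $\Psi_n^{\w}$ and then observe that the chain map condition on these corollas is literally the $\A_{\infty}$-relation~\eqref{defs3}. Both directions will come out of the same calculation, so I will set up the correspondence between the differential on $X^{*,*}_*$ and the operator $\overline{D}^{\A, \w}$ appearing in~\eqref{defs24}, and then compare the two.

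First I would observe that the family of maps $\mu$ in~\eqref{defs23} is determined by its values on corollas together with the composition identity $\mu(T \circ_i S) = \mu(T) \circ_i \mu(S)$ (where on the right the $\circ_i$ is insertion of operations into the $i$-th argument), which holds by construction. Since $X^{n,\w}_*$ is generated by iterated $\circ_i$ compositions of corollas, and since (by Lemma~\ref{defs16}) $\circ_i$ is a chain map on the tree side while on the algebra side insertion into a chain map is again a chain map, a straightforward induction on the number of internal vertices reduces the ``if and only if'' to checking the chain map property of $\mu$ on each single corolla $\Psi_n^{\w}$.

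Now fix $n$ and $\w$. By~\eqref{defs17}, $\del \Psi_n^{\w}$ is the sum over all edge expansions of $\Psi_n^{\w}$: each such expansion is a two-vertex tree obtained by splitting the unique internal vertex along a new edge $e$, where one side inherits $r$ consecutive inputs (for some $1 \le i \le n-r+1$) and a weight $\vv \le \w$, while the other side keeps the remaining $n-r+1$ inputs and the complementary weight $\w - \vv$. Applying $\mu$ to such a two-vertex tree produces exactly the composition $\mu_{n-r+1}^{\w - \vv}(a_1, \ldots, \mu_r^{\vv}(a_i, \ldots, a_{i+r-1}), \ldots, a_n)$, so $\mu(\del \Psi_n^{\w})$ is precisely the sum appearing on the right-hand side of~\eqref{defs3}, with the exception of the terms involving $\mu_1^0$ on a single factor; those terms are supplied by the Koszul differential on $\A^{\otimes n}$, which contributes to $\del \mu(\Psi_n^{\w})$. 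Combining the two contributions and invoking $\mu_1^0 = \del_{\A}$, the chain map condition $\del \mu(\Psi_n^{\w}) = \mu(\del \Psi_n^{\w})$ becomes exactly the relation~\eqref{defs3} (equivalently~\eqref{defs24} applied to the input $a_1 \otimes \cdots \otimes a_n$ and weight $\w$).

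The step I expect to require the most care is the bookkeeping in the corolla calculation: one must verify that every edge expansion of $\Psi_n^{\w}$ corresponds to exactly one pair $(r, i, \vv)$ with $r \ge 1$, and that the degenerate expansions where a valence-$2$ vertex is created with weight $0$ on one side (which would be unstable) are correctly accounted for by the Koszul differential on $\A^{\otimes n}$ and by $\mu_1^0$ acting on the output. Once this matching is set up, stability of the edge-expansion trees guarantees that no spurious terms appear, and the equivalence of the chain map condition with~\eqref{defs3} is immediate. The inductive step then propagates the equivalence from corollas to all of $X^{n,\w}_*$, completing the proof.
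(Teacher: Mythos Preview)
Your argument is correct. The reduction to corollas via the multiplicativity $\mu(T\circ_i S)=\mu(T)\circ_i\mu(S)$ together with Lemma~\ref{defs16} is the standard move, and your identification of the two ``missing'' unstable edge expansions $(r,\vv)=(1,0)$ and $(r,\vv)=(n,\w)$ with the two pieces of the differential on $\mathrm{Mor}(\A^{\otimes n},\A)$ is exactly the bookkeeping that makes the corolla case equal the $\A_\infty$-relation~\eqref{defs3}.

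There is nothing to compare against here: the paper does not prove this lemma but simply imports it as Lemma~4.12 of~\cite{DiagBible}. Your write-up is a faithful reconstruction of the expected proof; the only cosmetic point is that in your inductive step you should state explicitly that $\circ_i$ on $\mathrm{Mor}(\A^{\otimes n},\A)$ is a chain map (i.e.\ that operadic insertion of morphisms satisfies a Leibniz rule with respect to the Hom differential), since this is what makes the induction go through.
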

        
        This means that we can express the $\A_{\infty}$-operations and -relations of an $\A_{\infty}$-algebra $\A$ with operations $\{\mu_n^{\w}\}$ in terms of trees. Namely, we write $\mu_n^{\w}$ as $\Psi_n^{\w}$ for each $n$, and write the $\A_{\infty}$-relations~\eqref{defs24} as
        \begin{center} $\sum_{\w_1 + \w_2 = \w}$
        \begin{tikzcd}
            \arrow[d, Rightarrow]  \\
            \overline{D}^{\A, \w_1} \arrow[d, Rightarrow]\\ 
            \overline{D}^{\A, \w_2} \arrow[d, Rightarrow] \\
            \:
        \end{tikzcd} $=0$
        \end{center}
        for each $\w \in \Lambda$.

        Lemma~\ref{defs22} also implies that given $\A_{\infty}$-algebras $\A$ and $\B$, a weighted algebra diagonal $\Gamma^{*,*}$ induces an $\A_{\infty}$-algebra structure on the $(R_1, R_2)$-bimodule $\A \otimes \B$, in the following way. Let all notation be as in Definition~\ref{defs20}. For each $n, \w$, define $\gamma^{n, \w} = \Gamma^{n, \w}(\Psi_n^{\w}).$ For each $n, \w$, we have 
        \[  
            \gamma^{n, \w} \in \bigoplus_{\w_1, \w_2 \leq \w} \tilde{X}^{n, \w_1}_* \otimes \tilde{X}^{n, \w_2}_*
        \]
        Therefore, by using the maps $\mu$ from~\eqref{defs23}, for $\A$ and $\B$ respectively, each $\gamma^{n,\w}$ determines an operation $(\A \otimes \B)^{\otimes n} \to \A \otimes \B$. For each $n, \w$, denote this operation by $\mu_n^{\w, \A \otimes \B}.$ Then since $\Gamma^{*,*}$ is a chain map (by the stacking condition), it follows from Lemma~\ref{defs22} that this process induces a valid $\A_{\infty}$-structure on $\A \otimes \B$.

         We will need the following notion of boundedness to define box tensor products in Section~\ref{boxy}. We say that a weighted $\A_{\infty}$-algebra with operations $\{\mu_n^{\w}\}$ is \emph{bonsai} if there is a single $N \in \Z$ such that if $T$ is a stably weighted tree with $\dim T > N$, then $\mu(T) = 0$ (where $\mu$ is as in~\eqref{defs23}).

         Finally, we define the notion of maps between weighted $\A_{\infty}$-algebras:

         \begin{definition}\label{aux2}
             Homomorphism of weighted $\A_{\infty}$-algebras
         \end{definition}

         \noindent The data for such a homomorphism is as follows
         \begin{itemize}
             \item Weighted $\A_{\infty}$ algebras $\A, \B$ over a single ground ring $R$ and weight space $\Lambda$;

             \item Chain maps $\varphi_n^{\w}: \A^{\otimes n} \to \B$, for each $n \in \Z_{\geq 0}$ and $\w \in \Lambda$. Define
             \begin{align*}
                \varphi^{0, \w} &= 0 \text{ for each } \w \\
                \varphi^{1, \w} &= \sum_{j \geq 0} \varphi_{j}^{\w} \\
                \varphi^{k, \w} &= \sum_{\w_1 + \w_2 = \w}(\id_{T^*(\B)} \otimes \varphi^{1, \w_2}) \circ (\varphi^{(k - 1), \w_1} \otimes \id_{T^*(\A)}) \circ \Delta \text{ for each } k \geq 2,
            \end{align*}
            where $\Delta: T^*(\A) \to T^*(\A) \otimes T^*(\A)$ is the standard comultiplication map; so $\varphi^{k, \w}: T^*(\A) \to \B^{ \otimes k}$ for each $k$. Define
            \[
                \varphi^{\w} = \sum_{k \geq 0} \varphi^{k, \w}
            \]
            We require that these maps satisfy the following $\A_{\infty}$-relations:
            \begin{equation}\label{aux3}
               \sum_{\w_1 + \w_2 = \w} \varphi^{\w_2} \circ \overline{D}^{\A, \w_1} + \overline{D}^{\B, \w_2}  \circ \varphi^{\w_1} = 0
            \end{equation}
         \end{itemize} 

         Additionally, we note that any homomorphism between weighted $\A_{\infty}$-algebras with idempotents satisfies the following compatibility condition.

         \begin{lemma}\label{aux29}
             Let $\A$ and $\B$ be weighted $\A_{\infty}$-algebras over a weight space $\Lambda$ and ground ring $R$, which is a polynomial ring over $R_0$ (for some $N$). Assume $\A$ and $\B$ both satisfy the conditions of Definition~\ref{aux28}, and let $\varphi: \A \to \B$ be a weighted $\A_{\infty}$-algebra homomorphism. Then for any $a_1, \ldots, a_n \in \A$ and $\w \in \Lambda$,
             \[
                \varphi_n^{\w}(a_1, \ldots, a_n) = 0
             \]
             unless the initial idempotent of $a_i$ is equal to the final idempotent of $a_{i - 1}$ for each $2 \leq i \leq n$.
         \end{lemma}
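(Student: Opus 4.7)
The plan is to mimic the argument sketched in Remark~\ref{aux30} for the algebra operations $\mu_n^{\w}$. The structural fact in play is that each $\varphi_n^{\w} \colon \A^{\otimes n} \to \B$ is a chain map of $R$-bimodules, while the tensor product $\A^{\otimes n}$ is formed over the ring of idempotents $R_0$, so that the $R_0$-balance lets one slide an idempotent across a tensor symbol. First I would fix $a_1, \ldots, a_n \in \A$ and assume that at some position $2 \leq i \leq n$ the final idempotent $\I_j$ of $a_{i-1}$ differs from the initial idempotent $\I_k$ of $a_i$.

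The key computation is then elementary: one writes $a_{i-1} = a_{i-1}\cdot \I_j$ and $a_i = \I_k \cdot a_i$, and then uses the $R_0$-balance of the tensor product to compute
\[
a_1 \otimes \cdots \otimes (a_{i-1}\cdot \I_j) \otimes a_i \otimes \cdots \otimes a_n
= a_1 \otimes \cdots \otimes a_{i-1} \otimes (\I_j\cdot a_i) \otimes \cdots \otimes a_n
\]
inside $\A^{\otimes n}$. The defining relation $\I_j \I_k = 0$ for $j \neq k$ forces $\I_j \cdot a_i = \I_j \I_k \cdot a_i = 0$, so the pure tensor already vanishes in $\A^{\otimes n}$, and applying $\varphi_n^{\w}$ yields $0$.

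The one subtlety worth flagging is that Definition~\ref{aux2} only asks that each $\varphi_n^{\w}$ be a chain map, not explicitly an $R$-bimodule map. I would handle this by reading the definition in the natural category for $\A_{\infty}$-objects over a ring of idempotents, in which all structure maps are $R$-bimodule maps and all tensor products are formed over $R_0$; this is the convention implicit throughout the paper and makes the argument above immediate. The main obstacle if one instead attempted a proof directly from the homomorphism relation~\eqref{aux3} would be an induction on $(|\w|, n)$: the $\A_{\infty}$-relation applied to a mismatched input would kill most terms by the $\mu$-version of idempotent compatibility and by the inductive hypothesis, but the remaining $\mu_1^0\circ \varphi_n^{\w}$ term would only show $\varphi_n^{\w}(a_1,\ldots,a_n)$ to be a cycle, not zero. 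This confirms that the bimodule-theoretic argument is the right route.
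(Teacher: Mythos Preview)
Your proposal is correct and follows essentially the same route as the paper: the paper's proof is a one-liner observing that $\varphi_n^{\w}$ is an $R$-bimodule homomorphism on the tensor product (formed over the idempotent ring), which is precisely the structural fact you invoke and then spell out explicitly. Your additional remark flagging the gap between ``chain map'' in Definition~\ref{aux2} and ``$R$-bimodule map'' is a useful clarification that the paper leaves implicit.
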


        This is clear from the definitions, since $\varphi_n^{\w}$ is a ring homomorphism on the tensor product for each $n \in \Z_{\geq 0}$.
        
         We now define the first type of $\A_{\infty}$-bimodule which appears in Theorem~\ref{Koszul}

        \begin{definition}\label{defs41}
            Weighted AA-bimodules
        \end{definition}

        \noindent The data for an $AA$-bimodule $\:_{\A} Y_{\B}$ is as follows:
        \begin{itemize}
            \item Weighted $\A_{\infty}$-algebras $\A$ and $\B$ over polynomial ground rings $R_1, R_2$, respectively, and weight spaces $\Lambda_1, \Lambda_2$, respectively, such that $(R_1, \Lambda_1)$, $(R_2, \Lambda_2)$ are compatible in the sense of Definition~\ref{defs2}, above;

            \item An $(R_1,R_2)$-bimodule $Y$;

            \item $(R_1, R_2)$-bimodule maps $m_{n|j}^{\w}: \A^{\otimes_{R_1} n} \otimes_{R_1} Y \otimes_{R_2} \B^{\otimes_{R_2} j} \to Y$, where $n, j \in \Z_{\geq 0}$ and $\w \in \Lambda_1 + \Lambda_2$, which satisfy the following $\A_{\infty}$-relations: for $n, j \in \Z_{\geq 0}, a_1, \ldots, a_n \in \A, b_1, \ldots, b_j \in \B, \vv \in \Lambda_1,$ and $\w \in \Lambda_2$
            \begin{align*}
                0 = \sum_{{\tiny\begin{matrix}0 \leq r \leq n \\ \vv' \leq \vv \end{matrix}}}\: \sum_{ 0 \leq i \leq n - r + 1} &m_{n - r + 1| j}^{\vv + \w - \vv'}(a_n, \ldots, \mu_r^{\vv'}(a_{i + r - 1}, \ldots, a_{i}), \ldots, a_1, \y, b_1, \ldots, b_n) \\
                & + \sum_{{\tiny\begin{matrix}0 \leq s \leq j\\ \w' \leq \w \end{matrix}}}\: \sum_{ 0 \leq i \leq j - s + 1} m_{n | j- s + 1}^{\vv + \w - \w'}(a_n, \ldots, a_1, \y, b_1, \ldots, \mu_s^{\w'}(b_{i}, \ldots, b_{i+s-1}) \ldots, b_n) \\
                & + \sum_{{\tiny \begin{matrix} 0 \leq r \leq n \\ 0 \leq s \leq j \\ \vv' \leq \vv \\ \w' \leq \w \end{matrix}}} m_{(n - r)|(j - s)}^{\vv + \w - \vv' - \w'}(a_n, \ldots, m_{r|s}^{\vv' + \w'}(a_r, \ldots a_1, \y, b_1 \ldots, b_s), \ldots, b_j)
            \end{align*}
        \end{itemize}

        When we are working over weighted $\A_{\infty}$-algebras with idempotents, we make the following additional compatibility restrictions:

        \begin{definition}\label{aux31}
            Weighted AA-bimodules with idempotents
        \end{definition}

        \noindent The data for such a structure is as follows
        \begin{itemize}
            \item A ring of idempotents $R_0$, for some fixed $N$;
            
            \item Weighted $\A_{\infty}$-algebras over weight spaces $\Lambda_1, \Lambda_2$, and ground rings $R_1, R_2$ that are polynomial rings over $R_0$. We still require $(R_1, \Lambda_1)$, $(R_2, \Lambda_2)$ to be compatible in the sense of Definition~\ref{defs2}; 

            \item A weighted AA-bimodule $\:_{\A} Y_{\B}$ over $\A$ and $\B$, satisfying the $\A_{\infty}$-relations of Definition~\ref{defs41}.

            \item We require that each for each $\y \in Y$ there exist unique $1 \leq i, j \leq N$ such that
             \[
                \I_i \cdot \y = \y \cdot \I_j = \y
             \]
             and such that
             \[
                \I_{i'} \cdot \y = \y \cdot \I_{j'} = 0
             \]
             for each $i' \neq i$ and $j' \neq j$. We call $i$ the \emph{initial idempotent of $\y$} and $j$ the \emph{final idempotent of $\y$};

             \item We require that for each $\y \in Y$, $a_1, \ldots, a_n \in \A$, $b_1, \ldots, b_j \in \B$, and $\w \in \Lambda$, 
             \[
                m_{n|j}^{\w}(a_n,\ldots, a_1, \y, b_1, \ldots, b_j) = 0
             \]
             unless
             \begin{itemize}
                 \item The initial idempotent of $a_i$ is equal to the final idempotent of $a_{i + 1}$ for each $1 \leq i \leq n - 1$; 
                 \item The initial idempotent of $\y$ is equal to the final idempotent of $a_1$;
                 \item The initial idempotent of $b_1$ is equal to the final idempotent of $\y$;
                 \item The initial idempotent of $b_i$ is equal to the final idempotent of $b_{i -1}$, for each $1 \leq 2 \leq j$;
             \end{itemize}
        \end{itemize}

        \begin{remark}\label{aux31}
            \emph{Again, the final condition of Definition~\ref{aux30} follows from the existence of initial and final idempotents, but we state it in the definition for emphasis.}
        \end{remark}

        We can write the $\A_{\infty}$-relations from Definition~\ref{defs41} in a different way, which will be useful when we discuss them in terms of trees. Define $m^{\w}: \TT^*\A \otimes Y \otimes \TT^*(B) \to Y$ as
        \[
            m^{\w}|_{\A^{\otimes n}\otimes Y \otimes \B^{\otimes j}} = m_{n|j}^{\w},
        \]
        for each $n,j$.

        We can express AA-bimodule operations in terms of stably weighted bimodule trees $T \in XB^{*,*,*}_*$ in the following way. Let $Y$ be an AA-bimodule over $\A, \B$ as defined above. Note that each bimodule tree $T \in X^{n,j, \w}_*$ has a distinguished input leaf with $n$ input leaves to the left and $j$ to the right. There is a unique path $S$ from this $(n + 1)$st input leaf to the output leaf in $T$, which we can view as a sub-tree of $T$. We can orient $S$ from its input vertex to its output vertex, so that each vertex $v$ on $S$ has one leaf on $S$ which is closer to the input vertex of $S$, which we call the \emph{input leaf at $v$}, and one leaf on $S$ which is closer to the output vertex, which we call the \emph{output leaf at $v$}. For each interior vertex v on $S$, each leaf abutting $v$ is either to the left of the input and output leaves of $S$ at $v$, or to the right. For interior vertex $v$ in $S$, let $n(v)$ denote the number of leaves abutting $v$ to the left of the input / output leaves of $S$ at $v$, and let $j(v)$ denote the number of leaves abutting $v$ to the right. For each vertex of $T$, let $\w(v)$ denote the weight of $v$. Since $S$ bisects the tree $T$, so that every vertex of $T$ is either on $S$, to the left, or to the right. Replace vertices with $\A_{\infty}$-operations in the following way:
        \begin{itemize}
            \item Replace each interior $(n + 1)$-valent vertex $v$ to the left of $S$ with the operation $\mu_n^{\w(v), \A}$;
            
            \item Replace each interior $(n + 1)$-valent vertex $v$ to the right of $S$ with the operation $\mu_n^{\w(v), \B}$;

            \item Replace $v$ with the bimodule operation $m_{n(v),j(v)}^{\w(v)}$
        \end{itemize}
        Then compose all of these operations according to the leaves of $T$. This determines a collection of maps:
        \begin{equation}\label{defs25}
            m: XB^{n,j,\w}_* \to \mathrm{Mor}(\A^{\otimes n} \otimes Y \otimes \B^{\otimes j}, Y) \text{ for each } n, j \in \Z_{\geq 0}, \w \in \Lambda.
        \end{equation}
        More specifically, the procedure above defined $m$ on the generators of each $XB^{*,*,*}$, and $m$ can then be extended linearly to the full space.

        As in the case of $\A_{\infty}$-algebras, the following lemma is clear:

        \begin{lemma}\label{defs26}
            An $(R_1, R_2)$-bimodule $Y$ with operations $m_{n,j}^{\w}$ is an $\A_{\infty}$-algebra if and only if the maps $m$ are chain maps for each $n, j, \w$.
         \end{lemma}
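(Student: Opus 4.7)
The plan is to mimic the proof of Lemma~\ref{defs22} for the algebra case, treating the bimodule $Y$ as the distinguished ``middle'' input of the bimodule trees. The key observation is that, just as in the algebra case, it suffices to verify the chain map condition on the corollas in $XB^{n,j,\w}_*$, after which the general case follows from the operadic compatibility of $m$ and $\mu$ with the composition maps $\circ_i$ from Lemma~\ref{defs18}.

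The heart of the proof is the corolla case. Fix $n, j, \w$ and let $T$ be the bimodule corolla in $XB^{n,j,\w}_*$ with a single interior vertex of weight $\w$ (with $n$ leaves to the left of the distinguished input and $j$ leaves to the right). By construction, $m(T) = m_{n|j}^{\w}$. The differential $\del T$, defined by~\eqref{defs17}, is the sum over all edge insertions at the interior vertex; each resulting tree has exactly two interior vertices joined by a single edge, and falls into exactly one of three classes depending on where the new vertex lies relative to the unique path $S$ from the distinguished input to the output: either the new vertex sits strictly on one side of $S$, producing (after applying $m$) a term of the form $m_{n-r+1|j}^{\w-\vv}(\ldots,\mu_r^{\vv}(\ldots),\ldots,\y,\ldots)$ with the $\mu_r^{\vv}$ acting on $\A$-inputs or on $\B$-inputs, or the new vertex lies on $S$, producing a nested bimodule operation of the form $m^{\vv+\w'}_{(n-r)|(j-s)}(\ldots, m^{\vv'+\w'}_{r|s}(\ldots,\y,\ldots),\ldots)$. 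These three families are exactly the three sums in the $\A_{\infty}$-relation of Definition~\ref{defs41}. On the other hand, $\del m(T)$ is the commutator of $m_{n|j}^{\w}$ with the tensor-product (Koszul) differential on $\A^{\otimes n}\otimes Y \otimes \B^{\otimes j}$, whose only contributions are the $\mu_1^0$ and $m_{0|0}^0$ terms already included in the three sums above. Hence the chain map condition on bimodule corollas is equivalent to the $\A_{\infty}$-bimodule relations, giving both directions of the equivalence in this base case.

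For a general generator $T \in XB^{n,j,\w}_*$, write $T$ as an iterated composition of corollas using the maps $\circ_i$, $\circ_{n+1}$ from Lemma~\ref{defs18}. By the definition of $m$ in~\eqref{defs25} (and $\mu$ in~\eqref{defs23}), $m$ is compatible with these compositions in the sense that $m(T_1 \circ_i T_2)$ equals the composite of $m(T_1)$ and $m(T_2)$ (or $\mu(T_1)$ and $\mu(T_2)$, depending on the types) at the appropriate slot. Since $\circ_i$ and $\circ_{n+1}$ are chain maps, applying the Leibniz rule gives $m(\del T)$ as a sum of compositions in which each factor has had $m$ applied to its own $\del$; invoking the corolla case inductively then yields $m(\del T) = \del m(T)$. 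Conversely, if all the maps $m$ are chain maps, specializing to bimodule corollas recovers the $\A_{\infty}$-relations.

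The main obstacle is purely bookkeeping: correctly classifying the edge insertions at the middle vertex into the three families (left-$\A$, right-$\B$, and bimodule nesting), and verifying that the contribution of the Koszul differential on the tensor product is exactly absorbed into the $r=1$, $\vv=0$ and $s=1$, $\w'=0$ and $(r,s)=(0,0)$ summands. Because the combinatorics of edge insertion for bimodule trees parallels that of algebra trees once the path $S$ is fixed, this is a mechanical extension of the argument for Lemma~\ref{defs22} given in~\cite{DiagBible}, and no new ideas beyond careful indexing are needed.
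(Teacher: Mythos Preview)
Your proposal is correct and follows the natural route. The paper itself offers no proof: it simply prefaces the lemma with ``As in the case of $\A_{\infty}$-algebras, the following lemma is clear,'' treating it as an immediate bimodule analogue of Lemma~\ref{defs22} (which is in turn cited from~\cite{DiagBible}). Your write-up supplies exactly the argument the paper suppresses --- checking the chain-map condition on bimodule corollas by partitioning edge insertions into the left-$\A$, right-$\B$, and nested-bimodule families, then extending to arbitrary trees via operadic compatibility with $\circ_i$ --- so there is nothing to compare beyond noting that you have written out what the paper leaves implicit.
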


        In terms of trees, the $\A_{\infty}$-relations from Definition~\ref{defs41} appear (for each $\w \in \Lambda$) as 
        \begin{center}
        \begin{tikzcd}[column sep = tiny, row sep = small]
            \arrow[dr, Rightarrow, no head] & & \arrow[dd, no head] & \arrow[ddl, Rightarrow, no head] \\
            & \overline{D}^{\A, \w_1} \arrow[dr, Rightarrow, no head] & & \\
            & & m^{\w_2} \arrow[d] & \\
            & & \: &
        \end{tikzcd} $+$
        \begin{tikzcd}[column sep = tiny, row sep = small]
            \arrow[ddr, Rightarrow, no head] & \arrow[dd, no head] & & \arrow[dl, Rightarrow, no head] \\
            & & \overline{D}^{\B, \w_1} \arrow[dl, Rightarrow, no head] & \\
            & m^{\w_2} \arrow[d] & & \\
            & \: & &
        \end{tikzcd} $+$
        \begin{tikzcd}[column sep = tiny, row sep = small]
            \arrow[dr, Rightarrow, no head] & & \arrow[dd, no head] & & \arrow[dl, Rightarrow, no head] \\
            & \Delta \arrow[dr, Rightarrow, no head] \arrow[ddr, bend right, Rightarrow, no head] & & \Delta \arrow[dl, Rightarrow, no head] \arrow[ddl, bend left, Rightarrow, no head] & \\
            & & m^{\w_1} \arrow[d, no head] & & \\
            & & m^{\w_2} \arrow[d] & & \\
            & & \: & & 
        \end{tikzcd} $= 0$
        \end{center}

        \begin{definition}\label{defs4}
            DD-bimodules over weighted $\A_{\infty}$-algebras
        \end{definition}

        \noindent The data for a DD-bimodule over a pair of weighted $\A_{\infty}$-algebras is as follows:

        \begin{itemize}
            \item Weighted $\A_{\infty}$-algebras $\A$ and $\B$ over polynomial ground rings $R_1, R_2$, respectively, and weight spaces $\Lambda_1, \Lambda_2$, respectively, such that $(R_1, \Lambda_1)$, $(R_2, \Lambda_2)$ are compatible in the sense of Definition~\ref{defs2}, above;

            \item A weighted algebra diagonal $\Gamma^{*,*}$;

            \item An $(R_1,R_2)$-bimodule $X$;

            \item An $(R_1, R_2)$-bimodule operation $\delta^1: X \to \A \otimes_{R_1} X \otimes_{R_2} B$. This gives rise to operations $\delta^n: X \to \A^{\otimes n} \otimes X \otimes \B^{\otimes n}$, which satisfy the following $\A_{\infty}$-relations
            \begin{equation}\label{defs5}
                \sum_{{\tiny \begin{matrix} n \in \Z_{\geq 0} \\ \vv \in \Lambda_1 \\ \w \in \Lambda_2 \end{matrix}}} (\mu_n^{(\vv + \w), \A \otimes \B} \otimes \id_X) \circ \delta^n = 0,
            \end{equation}
            where $\mu_*^{*, \A \otimes \B}$ denote the weighted $\A_{\infty}$-operations defined on $\A \otimes \B$ by the algebra diagonal $\Gamma^{*,*}$.
        \end{itemize}

        The sum in~\eqref{defs5} is finite under certain conditions:
        \begin{lemma}\label{aux35}
            If $\A$ and $\B$ are both bonsai (so that $\A \otimes \B$ is) \emph{or} if $\delta^n \equiv 0$ for all sufficiently large $n$, then the sum in~\eqref{defs5} is finite. 
        \end{lemma}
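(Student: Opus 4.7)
The plan is to treat the two hypotheses separately. In each case, for a fixed $x \in X$, we need to show that only finitely many triples $(n, \vv, \w) \in \Z_{\geq 0} \times \Lambda_1 \times \Lambda_2$ contribute a non-zero summand to~\eqref{defs5}.

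For the bonsai case, the cleanest route is to prove first that $\A \otimes \B$ is itself bonsai, with bonsai bound $N_{\A} + N_{\B}$, where $N_{\A}, N_{\B}$ are bonsai bounds for $\A, \B$ respectively. Recall that $\mu_n^{\w, \A \otimes \B}$ is built from $\gamma^{n, \w} = \Gamma^{n,\w}(\Psi_n^{\w})$, a linear combination of tensor products $T_1 \otimes T_2$, by applying the map $\mu$ from~\eqref{defs23} to each factor. By the dimension preservation clause of Definition~\ref{defs20}, each term satisfies $\dim T_1 + \dim T_2 = \dim \Psi_n^{\w} = n + 2|\w| - 2$. Hence if $n + 2|\w| - 2 > N_{\A} + N_{\B}$, every summand of $\gamma^{n, \w}$ has either $\dim T_1 > N_{\A}$ or $\dim T_2 > N_{\B}$, forcing $\mu^{\A}(T_1)$ or $\mu^{\B}(T_2)$ to vanish. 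This gives $\mu_n^{\w, \A \otimes \B} \equiv 0$ whenever $n + 2|\w| > N_{\A} + N_{\B} + 2$. Since $\Lambda_1$ and $\Lambda_2$ are each $\Z_{\geq 0}$-generated by finitely many basic weight vectors, the collection of $(n, \vv, \w)$ satisfying this single bound on $n + 2|\vv + \w|$ is finite, and the sum in~\eqref{defs5} reduces to a finite sum.

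For the second case, suppose $\delta^n \equiv 0$ for $n > N_0$, so the outer summation is restricted to $n \leq N_0$. For each such $n$ and each $x \in X$, $\delta^n(x)$ is a finite linear combination of pure tensors $a_1 \otimes \cdots \otimes x' \otimes b_1 \otimes \cdots \otimes b_n$, which one obtains by iterating $\delta^1$ finitely many times. For each such tensor, the remaining sum over $(\vv, \w)$ reduces to asking for which weights $\mu_n^{\vv + \w, \A \otimes \B}$ is non-zero on this fixed input; by the construction of $\mu^{\A \otimes \B}$ via $\Gamma^{n, \vv + \w}$, this reduces in turn to the analogous finiteness for the individual operations $\mu_k^{\w', \A}$ and $\mu_\ell^{\w'', \B}$ appearing in each tree of $\gamma^{n, \vv + \w}$, which is the standing finiteness convention on the weighted operations of $\A$ and $\B$.

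There is no real obstacle here: the bonsai case is a direct dimension count using the properties of $\Gamma^{*,*}$, and the second case is bookkeeping on top of the finiteness built into $\delta^1$ and the algebra operations. The only delicate point is making the dimension bound in the bonsai case, which is why I would phrase that case as ``$\A \otimes \B$ is bonsai with bound $N_{\A} + N_{\B}$'' and then immediately observe that bonsai bounds on $\A \otimes \B$ imply the desired finiteness.
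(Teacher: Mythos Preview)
The paper does not prove this lemma; it simply records it as ``a remark made on page 129 of~\cite{DiagBible}'' and moves on, so there is no argument of the paper's to compare against.

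Your bonsai case is correct and is the natural argument: dimension preservation for $\Gamma^{*,*}$ forces every term $T_1 \otimes T_2$ of $\gamma^{n,\w}$ to satisfy $\dim T_1 + \dim T_2 = n + 2|\w| - 2$, so one of $\mu^{\A}(T_1), \mu^{\B}(T_2)$ vanishes once $n + 2|\w| - 2 > N_{\A} + N_{\B}$. Since only the corolla operations $\mu_n^{\vv+\w, \A \otimes \B}$ appear in~\eqref{defs5}, this already yields the required finiteness; the stronger claim that $\A \otimes \B$ is bonsai for arbitrary trees is not needed here (though it is also true, via the additivity $\dim T = \sum_v \dim \Psi_{n(v)}^{\w(v)}$ applied vertex by vertex).

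Your second case has a gap. You invoke a ``standing finiteness convention on the weighted operations of $\A$ and $\B$,'' but no such convention is stated anywhere in this paper. With $n$ bounded and $\delta^n(x)$ fixed, there is still an a priori infinite sum over $(\vv, \w)$, and nothing in the definitions given here rules out $\mu_n^{\vv + \w, \A\otimes\B}$ being nonzero on a fixed input for infinitely many weights. The paper does not fill this in either --- it simply defers to~\cite{DiagBible} --- and in its only application (Section~\ref{dd}) the finiteness actually used comes from the much sharper Lemma~\ref{aux36}. If you want the second alternative to stand on its own, you should either locate the precise boundedness hypothesis in~\cite{DiagBible} and state it, or add it explicitly as an assumption on $\A$ and $\B$.
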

        This is a remark made on page 129 of~\cite{DiagBible}. 

        When we are working over weighted $\A_{\infty}$-algebras with idempotents, we make the following additional compatibility relations:

        \begin{definition}\label{aux31}
            DD-bimodules with idempotents
        \end{definition}

        \noindent The data for such a structure is as follows
        \begin{itemize}
            \item A ring of idempotents $R_0$, for some fixed $N$;
            
            \item Weighted $\A_{\infty}$-algebras $\A$ and $\B$ over weight spaces $\Lambda_1, \Lambda_2$, and ground rings $R_1, R_2$ that are polynomial rings over $R_0$. We still require $(R_1, \Lambda_1)$, $(R_2, \Lambda_2)$ to be compatible in the sense of Definition~\ref{defs2}; 

            \item A DD-bimodule $\:_{\A} X_{\B}$ over $\A$ and $\B$, satisfying the $\A_{\infty}$-relations of Definition~\ref{defs4}.

            \item We require that each for each $\x \in X$ there exist unique $1 \leq i, j \leq N$ such that
             \[
                \I_i \cdot \x = \x \cdot \I_j = \x
             \]
             and such that
             \[
                \I_{i'} \cdot \x = \x \cdot \I_{j'} = 0
             \]
             for each $i' \neq i$ and $j' \neq j$. We call $i$ the \emph{initial idempotent of $\x$} and $j$ the \emph{final idempotent of $\x$};

             \item We require that for each $\x\in X$, if we write
             \[
                \delta^1 \x = \sum_{r = 1}^n a_r \otimes \x_r \otimes b_r,
             \]
             then for each $r$ the initial idempotent of $\x$ is equal to the final idempotent of $a_r$, and the final idempotent of $\x$ is equal to the intial idempotent of $b_r$;
        \end{itemize}

        We can also express the $\A_{\infty}$-operations and -relations on a $DD$-bimodule in terms of trees. We write $\delta^1$ as
        \begin{center}
        \begin{tikzcd}[column sep = tiny]
             & \arrow[d] & \\
             & \delta^1 \arrow[d] \arrow[dl] \arrow[dr] & \\
             \: & \: & \:
        \end{tikzcd}
        \end{center}
        so that the $\A_{\infty}$-relations appear as
        \begin{center} $\sum_{n, \w}$
        \begin{tikzcd}[column sep = tiny]
            & & \arrow[d, no head] & & \\
            & & \delta^{n} \arrow[dd] \arrow[dl, Rightarrow] \arrow[dr, Rightarrow] & & \\
            & \arrow[dl, bend right] \mu(\gamma^{n, \w}) & & \mu(\gamma^{n, \w}) \arrow[dr, bend left] & \\
            \: & & \: & & \:
        \end{tikzcd} $= 0$
        \end{center}

        \begin{definition}\label{defs6}
            Weighted DA-bimodule $\:^{\A} M_{\B}$ over weighted $\A_{\infty}$-algebras $\A$ and $\B$
        \end{definition}

       \noindent The data for a weighted DA-bimodule is as follows:
        \begin{itemize}
            \item Weighted $\A_{\infty}$-algebras $\A$ and $\B$ over polynomial ground rings $R_1, R_2$, respectively, and a single weight spaces $\Lambda$; 

            \item An $(R_1, R_2)$-bimodule $M$;

            \item $(R_1, R_2)$-bimodule operations $\delta_{j + 1}^{1,\w}: M \otimes_{R_1} \A^{\otimes j} \to \B \otimes_{R_2} M$, for $j \in \Z_{\geq 0}, \w \in \Lambda$. Define
            \begin{align*}
                \delta^{0, \w} &= \id_M \text{ for each } \w \\
                \delta^{1, \w} &= \sum_{j \geq 0} \delta_{1 + j}^{1, \w} \\
                \delta^{k, \w} &= \sum_{\w_1 + \w_2 = \w}(\id_{T^*(\A)} \otimes \delta^{1, \w_2}) \circ (\delta^{(k - 1), \w_1} \otimes \id_{T^*(\B)}) \circ (\id_M \otimes \Delta) \text{ for each } k \geq 2,
            \end{align*}
            where $\Delta: T^*(\B) \to T^*(\B) \otimes T^*(\B)$ is the standard comultiplication map; so $\delta^{k, \w}: M \otimes T^*(\B) \to \A^{ \otimes k} \otimes M$ for each $k$. Define
            \[
                \delta^{M, \w} = \sum_{k \geq 0} \delta^{k, \w}
            \]
            We require that these maps satisfy the following $\A_{\infty}$-relations:
            \begin{equation}\label{defs7}
               \sum_{\w_1 + \w_2 = \w} \delta^{M, \w_2} \circ (\id_M \otimes \overline{D}^{\B, \w_1})  + (\overline{D}^{\A, \w_2} \otimes \id_M) \circ \delta^{M, \w_1} = 0
            \end{equation}  
        \end{itemize}

        When we are working over weighted $\A_{\infty}$-algebras with idempotents, we make the following additional compatibility restrictions, analogous to those for the other types of $\A_{\infty}$-bimodules, above.

        \begin{definition}\label{aux32}
            Weighted DA bimodule with idempotents
        \end{definition}

        \noindent The data for such a structure is as follows
        \begin{itemize}
            \item A ring of idempotents $R_0$, for some fixed $N$;
            
            \item Weighted $\A_{\infty}$-algebras over weight spaces $\Lambda_1, \Lambda_2$, and ground rings $R_1, R_2$ that are polynomial rings over $R_0$. We still require $(R_1, \Lambda_1)$, $(R_2, \Lambda_2)$ to be compatible in the sense of Definition~\ref{defs2}; 

            \item A weighted DA-bimodule $\:^{\A} M_{\B}$ over $\A$ and $\B$, satisfying the $\A_{\infty}$-relations of Definition~\ref{defs6}.

            \item We require that each for each $\m \in M$ there exist unique $1 \leq i \leq N$ such that
             \[
                \I_i \cdot \m = \m \cdot \I_i = m
             \]
             and such that
             \[
                \I_{i'} \cdot \m = \m \cdot \I_{i'} = 0
             \]
             for each $i' \neq i$. We say that \emph{$\m$ is in the $i$-th idempotent}, or that $\m$ has both initial and final idempotent $i$.

             \item We require that for each $\m \in Y$, $b_1, \ldots, b_j \in \B$, and $\w \in \Lambda$, if we write
             \begin{equation}\label{aux33}
                \delta_{1 + j}^{1, \w}(\m, b_1, \ldots, b_j) = 0
             \end{equation}
             unless the initial idempotent of $b_1$ is equal to the final idempotent of $\m$ and the initial idempotent of $b_i$ is equal to the final idempotent of $b_{i -1}$, for each $1 \leq 2 \leq j$. If the expression on the left hand side of~\eqref{aux33} is non-zero, say
             \[
                \delta_{1 + j}^{1, \w}(\m, b_1, \ldots, b_j) = \sum_{i = 1}^k a_{i,r_i} \otimes \cdots \otimes a_{i, 1} \otimes \m_i
             \]
             then for each $1 \leq i\leq k$:
             \begin{itemize}
                 \item The initial idempotent of $\m_i$ equals the final idempotent of $a_{i,1}$;

                 \item For each $1 \leq \ell \leq n - 1$, the initial idempotent of $a_{i,\ell}$ is equal to the final idempotent of $a_{i, \ell + 1}$;
             \end{itemize}
        \end{itemize}

        We can also express DA-bimodule operations in terms of trees, that is:
        \begin{center}
        \begin{tikzcd}[column sep = tiny]
            & \arrow[d] & \arrow[dl, Rightarrow] \\
            & \delta^{1, \w} \arrow[d] \arrow[dl] & \\
            \: & \: &
        \end{tikzcd} \hspace{.5cm} and
        \begin{tikzcd}[column sep = tiny]
            & \arrow[d] & \arrow[dl, Rightarrow] \\
            & \delta^{k, \w} \arrow[d] \arrow[dl, Rightarrow] & \\
            \: & \: &
        \end{tikzcd} \hspace{.5cm} and
        \begin{tikzcd}[column sep = tiny]
            & \arrow[d] & \arrow[dl, Rightarrow] \\
            & \delta^{M, \w} \arrow[d] \arrow[dl, Rightarrow] & \\
            \: & \: &
        \end{tikzcd}
        \end{center}
        so that the $\A_{\infty}$-relation from Definition~\ref{defs6} is
        \begin{center}$\sum_{\w_1 + \w_2 = \w}$
        \begin{tikzcd}[column sep = tiny]
           & \arrow[dd] & & \arrow[dl, Rightarrow] \\
           & & \overline{D}^{\B, \w_1} \arrow[dl, Rightarrow] & \\
           & \delta^{M, \w_2} \arrow[dl, Rightarrow] \arrow[d] & & \\
           \: & \: & & 
        \end{tikzcd} $+$
        \begin{tikzcd}[column sep = tiny]
            & & \arrow[d] & \arrow[dl, Rightarrow] \\
            & & \delta^{M, \w_1} \arrow[dl, Rightarrow] \arrow[dd] & \\
            & \overline{D}^{\A, \w_2} \arrow[dl, Rightarrow] & &\\
            \: & & \: &
        \end{tikzcd} $= 0$
        \end{center}

        In particular, we define the identity bimodule:

        \begin{definition}\label{aux1}
            Identity DA-bimodule over $\A$, $\:^{\A} \id_{\A}$
        \end{definition}

        \noindent For a given weighted $\A_{\infty}$-algebra $\A$ with , define $\:^{\A} \id_{\A} = \A$ as an $(R,R)$-bimodule. Define 
        \[
            \delta_{j + 1}^{1, \w} = \begin{cases}
                \id_{\A} & \text{ if } \w = 0, j = 1; \\
                0 & \text{ otherwise}
            \end{cases}
        \]

        Note that the operations on $\:^{\A} \id_{\A}$ clearly satisfy the requirements of Definition~\ref{aux32}. Next, note that any weighted $\A_{\infty}$-algebra homomorphism gives rise to a weighted DA-bimodule:

        \begin{definition}\label{aux4}
            Weighted DA-bimodule $\:^{\B} [\varphi]_{\A}$
        \end{definition}

        \noindent Let $\A, \B$ be weighted $\A_{\infty}$-algebras with a single ground ring $R$ and a single weight-space $\Lambda$. Let $\varphi: \A \to \B$ be a weighted $\A_{\infty}$-algebra homomorphism. Let $M$ be an $(R,R)$-bimodule which has a single generator $\x$ -- i.e. one which is isomorphic to $R$ as an $(R,R)$-bimodule. Define
        \begin{equation}\label{aux6}
            \delta_{j + 1}^{1, \w}(\x, a_1, \ldots, a_j) = \varphi_j^{\w}(a_1, \ldots, a_j) \otimes x
        \end{equation}
        for each $j \in \Z_{\geq 0}, \w \in \Lambda$, and $a_1, \ldots, a_j \in \A$. By inspection of the $\A_{\infty}$-relations~\eqref{aux3} and~\eqref{defs7} for an weighted algebra homomorphism and DA-bimodule respectively, it is clear that the operations~\eqref{aux6} make $\:^{\B} M_{\A}$ into a bona fide weighted DA-bimodule, which we denote by $\:^{\B} [\varphi]_{\A}$.

        \begin{remark}\label{aux7}
            \emph{It is clear that the identity DA bimodule is isomorphic to the DA-bimodule determined by the identity homomorphism, i.e.}
            \[
                \:^{\A}\id_{\A} \cong \:^{\A} [\id]_{\A}
            \]
            \emph{for any weighted $\A_{\infty}$-algebra $\A$.}
        \end{remark}

        There is a partial converse to the construction above. 
        
        \begin{proposition}\label{aux5} 
            [Analogous to Lemma 2.2.50 of~\cite{Bim}] Let $\A, \B$ be weighted $\A_{\infty}$ algebras over the same ground ring $R$ and weight space $\Lambda$. Assume that $R$ is a polynomial ring over the ring of idempotents with $N$ generators, as defined above, and that $\A$ and $\B$ satisfy the conditions of Definition~\ref{aux28}, and let $\:^{\B} M_{\A}$ be a weighted DA-bimodule with idempotents in the sense of Definition~\ref{aux32}.
            
            Suppose that the underlying $R$-bimodule of $M$ has one generator in each idempotent, and that $\delta^1_1= 0$. 
            Then there exists an $\A_{\infty}$-homomorphism $\varphi: \A \to \B$ with $M = \:^{\B}[\varphi]_{\A}$. 
        \end{proposition}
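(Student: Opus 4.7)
The plan is to extract $\varphi$ directly from $\delta^{1,\w}_{1+j}$ evaluated on the preferred generators of $M$, and then verify that the DA-bimodule $\A_\infty$-relations translate to the $\A_\infty$-algebra homomorphism relations. For each $1 \leq i \leq N$, let $\x_i \in M$ denote the unique generator with $\I_i \cdot \x_i = \x_i \cdot \I_i = \x_i$. Given a compatible tuple $(a_1, \ldots, a_j) \in \A^{\otimes j}$ for which the initial idempotent of $a_1$ is $i$, the idempotent conditions in Definition~\ref{aux32} force each term of $\delta^{1,\w}_{1+j}(\x_i, a_1, \ldots, a_j) \in \B \otimes_R M$ to be of the form $b \otimes \x_{i'}$, where $i'$ is the final idempotent of $a_j$. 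The one-generator-per-idempotent hypothesis then ensures that $b \in \B$ is uniquely determined, so we may set $\varphi^\w_j(a_1, \ldots, a_j) := b$. For incompatible tuples we set $\varphi^\w_j = 0$, and for $j = 0$ we set $\varphi^\w_0 = 0$; the latter matches the hypothesis $\delta^1_1 = 0$ and the convention $\varphi^{0,\w} = 0$ from Definition~\ref{aux2}.

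The main task is to verify the $\A_\infty$-homomorphism relation~\eqref{aux3} for $\varphi$. This will follow once we show, by induction on $k$, that
\[
    \delta^{k,\w}(\x_i \otimes a_1 \otimes \cdots \otimes a_n) \;=\; \varphi^{k,\w}(a_1 \otimes \cdots \otimes a_n) \otimes \x_{i'},
\]
where $i'$ is the final idempotent of $a_n$. The base case $k = 1$ is the defining equation of $\varphi$. For the inductive step, the iterative formulas for $\delta^{k,\w}$ in Definition~\ref{defs6} and for $\varphi^{k, \w}$ in Definition~\ref{aux2} are structurally parallel: both split a sequence via the comultiplication $\Delta$, apply the previously constructed $(k-1)$-fold operation to one piece and the base operation to the other, and glue the outputs by tensor products with appropriate identity factors. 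The extra $\id_M$ appearing in each factor of the $\delta^{k,\w}$ formula allows the $\x_i$ tag to pass through unchanged at every stage, so the inductive hypothesis propagates cleanly. Summing over $k$ yields $\delta^{M,\w}(\x_i \otimes T) = \varphi^\w(T) \otimes \x_{i'}$ for any $T \in T^*(\A)$. Substituting this identity into the DA-bimodule relation~\eqref{defs7}, evaluated on $\x_i \otimes (a_1 \otimes \cdots \otimes a_n)$, and factoring out the common $\otimes \x_{i'}$ produces precisely~\eqref{aux3}.

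Once $\varphi$ is known to be a weighted $\A_\infty$-algebra homomorphism, the identification $M \cong \:^{\B}[\varphi]_\A$ is immediate by comparison with Definition~\ref{aux4}. The main obstacle is bookkeeping rather than substance: one must verify carefully that the iterative formulas match up at each inductive step, tracking how the $\x_i$ factor commutes through the $\id_M$ parts of each compositional piece, and (at the outset) that the $\B$-factor of $\delta^1$ is single-valued. The one-generator-per-idempotent hypothesis is essential here, for if $M$ contained two distinct generators in the same idempotent, then $\delta^1$ could mix them in its output and no single-valued extraction of $\varphi^\w_j$ would exist. Similarly, the assumption $\delta^1_1 = 0$ is exactly what is needed to guarantee that the constant term $\varphi^\w_0$ vanishes, as demanded by Definition~\ref{aux2}.
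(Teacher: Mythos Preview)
Your proposal is correct and follows essentially the same approach as the paper: define $\varphi_n^{\w}$ by stripping off the $M$-factor from $\delta_{n+1}^{1,\w}(\x_i,a_1,\ldots,a_n)$ (the paper packages this as a canonical isomorphism $f:\B\otimes M\to\B$, $b\otimes\m_i\mapsto b$), then read off the $\A_\infty$-homomorphism relations~\eqref{aux3} from the DA-bimodule relations~\eqref{defs7}. Your explicit induction on $k$ showing $\delta^{k,\w}(\x_i\otimes T)=\varphi^{k,\w}(T)\otimes\x_{i'}$ is a bit more detailed than what the paper writes (it simply asserts the relations follow ``immediately''), but the substance is identical.
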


        \begin{proof}
            Let $\m_1, \ldots, \m_N$ denote the generators of $M$ with $\m_j$ in idempotent $j$ for each $1 \leq j \leq N$. Note that because $\B$ satisfies the compatibility conditions of Definition~\ref{aux28} and $M$ satisfies the compatibility conditions of Definition~\ref{aux32}, any non-zero element of $\B \otimes M$ can be written as a sum of elements of the form $b \otimes \m_i$, where the $b \in \B$ with final idempotent $i$. This means that there is a canonical isomorphism $f: \B \otimes M \to \B$, given by $f(b \otimes \m_i) = b$ for each $b \in \B$ with final idempotent $i$, extended linearly to all $\B \times M$. Now, define $\varphi$ as follows. For each $n \in \Z_{\geq 0}, \w \in \Lambda$, and $a_1, \ldots, a_n \in \A$ such that $a_1$ has initial idempotent $i$, write 
            \begin{equation}\label{aux8}
                \varphi_n^{\w} (a_1, \ldots, a_n) = (f\circ \delta_{n + 1}^{1, \w})(\x_i, a_1, \ldots, a_n)
            \end{equation}
            Since $\delta_1^1 \equiv 0$, $\varphi_0 = 0$, as required. That $\varphi$ satisfies the $\A_{\infty}$-relations~\eqref{aux3} follows immediately from~\eqref{aux8} and the $\A_{\infty}$-relations~\eqref{defs7} for the DA-bimodule $M$. That $\varphi$ satisfies the compatibility conditions of Lemma~\ref{aux29} follows from the fact that $\B$ satisfies the compatibility conditions of Definition~\ref{aux28} and $M$ satisfies the compatibility conditions of Definition~\ref{aux32}.
        \end{proof}

        \subsection{Box tensor products}\label{boxy}

        The aim of this section is to define the box tensor product of a DD- and a weighted AA-bimodule, both over weighted $\A_{\infty}$-algebras.
        \begin{definition}\label{box1}
            Box tensor products of DD- and AA-bimodules
        \end{definition}    
        \begin{itemize}
            \item Weighted $\A_{\infty}$-algebras $\A$ and $\B$, ground rings $R_1$ and $R_2$ respectively, and weight-spaces $\Lambda_1$ and $\Lambda_2$, respectively;

            \item We require that the weight spaces $R_1, R_2$ and ground rings $\Lambda_1, \Lambda_2$ are compatible in the sense of Definition~\ref{defs2};

            \item A weighted AA-bimodule $\:_{\B} Y_{\A}$;

            \item A DD-bimodule $\:^{\A} X^{\B}$; 

            \item A weighted algebra diagonal $\Gamma^{*,*}$

            \item A weighted bimodule diagonal primitive $p^{*,*,*}$ compatible with $\Gamma^{*,*}$;
        \end{itemize}

        The box tensor product $\:^{\A} X^{\B} \:_{\B}\boxtimes^{\B} \:_{\B} Y_{\A}$ is a DA-bimodule with underlying $R_1$-bimodule $X \otimes_{R_2} Y$, and basic operations given by 
        \begin{equation}\label{box2}
            \delta_{j + 1}^{1, \w} = 
            \sum_{\tiny\begin{matrix}
                n \geq 0 \\
                (n, j, \w) \neq (0,0,0) \\
                (S,T) \in \p^{n,j, \w}
            \end{matrix}} (\mu(S) \otimes id_X \otimes m(T) )\circ \delta^{n, X},
        \end{equation}
        so that {\small
        \begin{equation}\label{box3}
            \delta^{j,\w} = \sum_{\Cc} (\mu(S_k) \otimes \id_X \otimes m(T_k))\circ (\mu(S_{k - 1}) \otimes \delta^{n_k, X} \otimes m(T_{k - 1})) \circ \cdots \circ (\mu(S_1)  \otimes \delta^{n_2, X} \otimes m(T_1))\circ(\delta^{n_1, X} \otimes \id_Y \otimes \Delta)
        \end{equation}}
        where $\Cc$ denotes the set of conditions:
        \[
        \begin{matrix}
                k \geq 0 \\
                n_i, j_i \geq 0 \\
                \w_i \in \Lambda \\
                \sum \w_i = \w \\
                \sum_{i = 1}^k j_i = j \\
                (n_i, j_i, \w_i) \neq (0,0,0) \text{ for each }i \\
                (S_i, T_i) \in \p^{n_i, j_i, \w_i} \text{ for each }i
            \end{matrix}
        \]
        In terms of trees, this looks like
        \begin{center}
            $\delta^{k, \w} = \sum\limits_{\Cc} $ \hspace{.5cm}\parbox{6cm}{\includegraphics[width = 6cm]{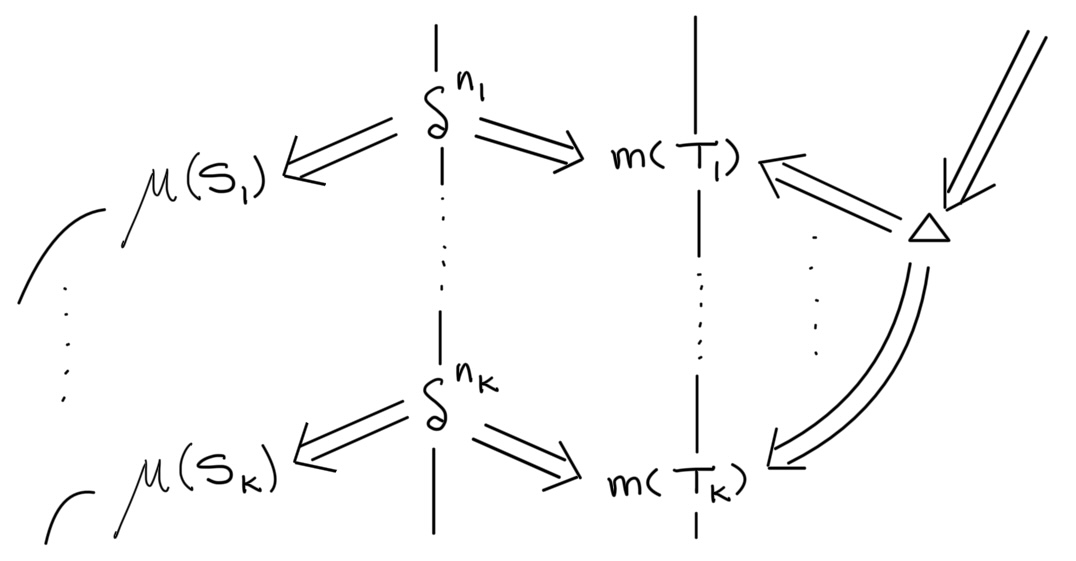}}
        \end{center}

        We next need to verify:

        \begin{lemma}\label{box4}
            With operations as in~\eqref{box2} and~\eqref{box3}, $X \boxtimes Y$ is a weighted DA-bimodule over $\A$ (on both left and right) in the sense of Definition~\ref{defs6}.
        \end{lemma}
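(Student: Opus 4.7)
The plan is to verify the DA-bimodule $\A_{\infty}$-relation~\eqref{defs7} for the operations $\{\delta_{j+1}^{1,\w}\}$ defined in~\eqref{box2}, by a tree-level calculation that exploits the compatibility relation~\eqref{defs10} for the weighted bimodule diagonal primitive $\p^{*,*,*}$ together with the $\A_{\infty}$-relations of $X$, $Y$, $\A$, and $\B$. The guiding idea is that $\p^{*,*,*}$ was designed precisely so that, under the operational interpretations $\mu$ and $m$ of Lemmas~\ref{defs22} and~\ref{defs26}, the internal differentials arising from the composite~\eqref{box3} all cancel, leaving only the boundary contributions that constitute the DA-relation on $X \boxtimes Y$ (which has $\A$-actions on both sides).

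First I would expand each summand of $\sum_{\w_1+\w_2=\w}\bigl(\delta^{M,\w_2}\circ(\id_M\otimes\overline{D}^{\A,\w_1}) + (\overline{D}^{\A,\w_2}\otimes \id_M)\circ\delta^{M,\w_1}\bigr)$ using the explicit formula~\eqref{box3}. Each resulting summand is indexed by a chain $((S_1,T_1),\ldots,(S_k,T_k))$ with $(S_i,T_i)\in\p^{n_i,j_i,\w_i}$, together with one additional $\mu^{\A}$-insertion coming either from $\overline{D}^{\A,\w_1}$ on the $\A$-input side of some $m(T_i)$ or from $\overline{D}^{\A,\w_2}$ on the $\A$-output side of some $\mu(S_i)$. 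Next, I would apply $\del$ to each $\p^{n_i,j_i,\w_i}$ in the chain and use~\eqref{defs10} to rewrite it as three families of terms: (i) $\CR^{\vv}(\p,\ldots,\p)$-terms, which correspond to splitting a single $\p^{n_i,j_i,\w_i}$ into two stacked primitives and cancel against the natural chain structure of~\eqref{box3}, each such split appearing twice (analogous to the red/blue paired cancellations in the proof of Lemma~\ref{defs27}\ref{defs34}); (ii) $\p\circ_L \gamma^{n,\w}$-terms, which attach a weighted algebra diagonal on the $\A$-coaction side of a $\delta^{n,X}$ and, in the interior of the chain, are killed by the DD $\A_{\infty}$-relation~\eqref{defs5} for $X$; and (iii) $\p\circ_R \Psi_j^{\w}$-terms, which attach a corolla on the $\B$-side of some $m(T_i)$ and, in the interior, are killed by the AA $\A_{\infty}$-relation for $Y$. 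All interior contributions cancel in pairs, and the only surviving terms are exactly the $\A$-insertions that constitute~\eqref{defs7} for $X\boxtimes Y$.

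The principal obstacle is bookkeeping: every contribution must be carefully matched to its cancelling partner across the three families of terms, in a manner more elaborate than but structurally parallel to Lemma~\ref{defs27}\ref{defs34}. No new algebraic input beyond~\eqref{defs10} is required; the argument mirrors the corresponding module-level verification in~\cite{DiagBible}, with the weighted bimodule diagonal primitive from Section~\ref{prim} replacing its module-level analogue. A minor technical point is that finiteness of the sums appearing in~\eqref{box3} and~\eqref{defs7} requires a bonsai hypothesis on $\A$ and $\B$ (or an analogous boundedness on $\delta^{n,X}$), which will be assumed implicitly throughout the verification.
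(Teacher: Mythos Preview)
Your overall strategy matches the paper's: exploit the compatibility relation~\eqref{defs10}, the DD-relation for $X$, and the $\A_\infty$-relations for $\A$ and $Y$ to verify~\eqref{defs7}. However, your assignment of which family of terms in~\eqref{defs10} plays which role is scrambled, and as written the cancellation scheme you describe does not close up.

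In the box product $\:^{\A}X^{\B}\boxtimes\:_{\B}Y_{\A}$, the bimodule tree $T\in XB^{n,j,*}$ has its $n$ left inputs fed by $\delta^{n,X}$ (these carry $\B$-elements) and its $j$ right inputs are the external $\A$-inputs. Hence $\p\circ_R\Psi_{j_1}^{\w_1}$ attaches a corolla on the $\A$-side, not the $\B$-side. These terms are \emph{not} killed by the AA-relation; rather, after composition with $\delta^{n,X}$, the sum of all $(\mu\otimes m)(\p^{n,j_2,\w_2}\circ_R\Psi_{j_1}^{\w_1})$ is exactly the contribution (call it (S1)) coming from the first term $\delta^{X\boxtimes Y,\w_2}\circ(\id\otimes\overline{D}^{\A,\w_1})$ of~\eqref{defs7}. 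Likewise, the $\CR^{\vv}$ terms do not ``cancel against the natural chain structure of~\eqref{box3}'': since $\RoJ^{\vv}$ applied to consecutive $S_i$'s is precisely a further $\mu^{\A}$ applied to the $\A$-outputs, the $\CR^{\vv}$ contribution is exactly (S2), the contribution of the second term $(\overline{D}^{\A,\w_2}\otimes\id)\circ\delta^{X\boxtimes Y,\w_1}$.

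The paper's organization is therefore: (i) $(\mu\otimes m)(\del\p^{n,j,\w})=0$ since $\mu$ and $m$ are chain maps (Lemmas~\ref{defs22} and~\ref{defs26}); (ii) the $\circ_L\gamma$ terms of~\eqref{defs10} vanish after $\delta^{n,X}$ by the DD-relation~\eqref{defs5}; hence (iii) the $\CR$ and $\circ_R\Psi$ contributions sum to zero after $\delta^{n,X}$, which is~\eqref{box7}. Identifying these with (S2) and (S1) respectively at each slot of the chain in~\eqref{box3} then gives~\eqref{defs7}. Your sketch contains the right ingredients but with the bookkeeping reversed; re-examine which structure relation is responsible for which family in~\eqref{defs10}.
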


        \begin{proof}
            We are going to use the expression for the operations given in~\eqref{box3}, and verify~\eqref{defs7}. We will do this calculation in terms of trees, since the arithmetic is easier to see in this form. 

            Notice first that for each $(n,j,\w) \neq (0,0,0)$, 
            \begin{equation}\label{box5}
                \del (\mu \otimes m )(\p^{n,j, \w}) = (\mu \otimes m ) (\del \p^{n,j,\w}) = 0
            \end{equation}
            because of the $\A_{\infty}$-relations for an $\A_{\infty}$-algebra and AA-bimodule. Note also that for each $n, j \geq 0, \w \in \Lambda$ with $(n,j, \w) \neq (0,0,0)$ 
            \begin{equation}\label{box6}
                \sum_{\tiny \begin{matrix}
                    n_1 + n_2 = n + 1 \\
                    \w_1 + \w_2 = \w
                \end{matrix}} (\mu \otimes m)(\p^{n_2, j, \w_2} \circ_L \gamma^{n_1, \w_1}) \circ (\delta^{n, X} \otimes \id_{Y\otimes \TT^* \A}) = 0
            \end{equation}
            by the $\A_{\infty}$-relations for the DD-bimodule $X$, which can be written as
            \[
                \sum_{\tiny \begin{matrix}
                    n \geq 0 \\
                    n_1 \leq n \\
                    \w_1 \in \Lambda
                \end{matrix}} (\mu(\gamma^{n_1, \w_1}) \otimes \id_X) \circ \delta^{n, X} = 0.
            \]
            This means that by the compatibility relation~\ref{defs10}, it follows that for each $n, j \geq 0, \w \in \Lambda$ with $(n,j,\w) \neq (0,0,0)$, we have
            \begin{align}\label{box7}
                \sum_{\tiny \begin{matrix}
                    \sum n_i = n \\
                    \sum j_i = j \\
                   \vv + \sum \w_i = \w
                \end{matrix}} (\mu \otimes m ) &(\CR^{\vv}(\p^{n_1,j_1,\w_1}, \ldots, \p^{n_k,j_k, \w_k})) \circ (\delta^{n,X} \otimes \id_{Y} \otimes \Delta) \notag \\
                & + \sum_{\tiny \begin{matrix} j_1 + j_2 = j + 1 \\ \w_1 + \w_2 = \w \end{matrix}}(\mu \otimes m ) (\p^{n, j_2, \w_2} \circ_R \Psi_{j_1}^{\w_1}) \circ (\delta^{n, X} \otimes \id_{Y} \otimes \Delta) = 0.
            \end{align}
            
            The first term of~\eqref{defs7} is
            \begin{center}
                \textbf{(*)}{$\sum\limits_{\w_1 + \w_2 = \w} \delta^{X \boxtimes Y, \w_2} \circ (\id_{X \boxtimes Y} \otimes \overline{D}^{\A, \w_1}) = \sum\limits_{\Cc'} \sum\limits_{\tiny \begin{matrix}1 \leq i \leq k \\ j_i' + j_i'' = j_i \end{matrix}}$} \parbox{8cm}{\includegraphics[width = 8cm]{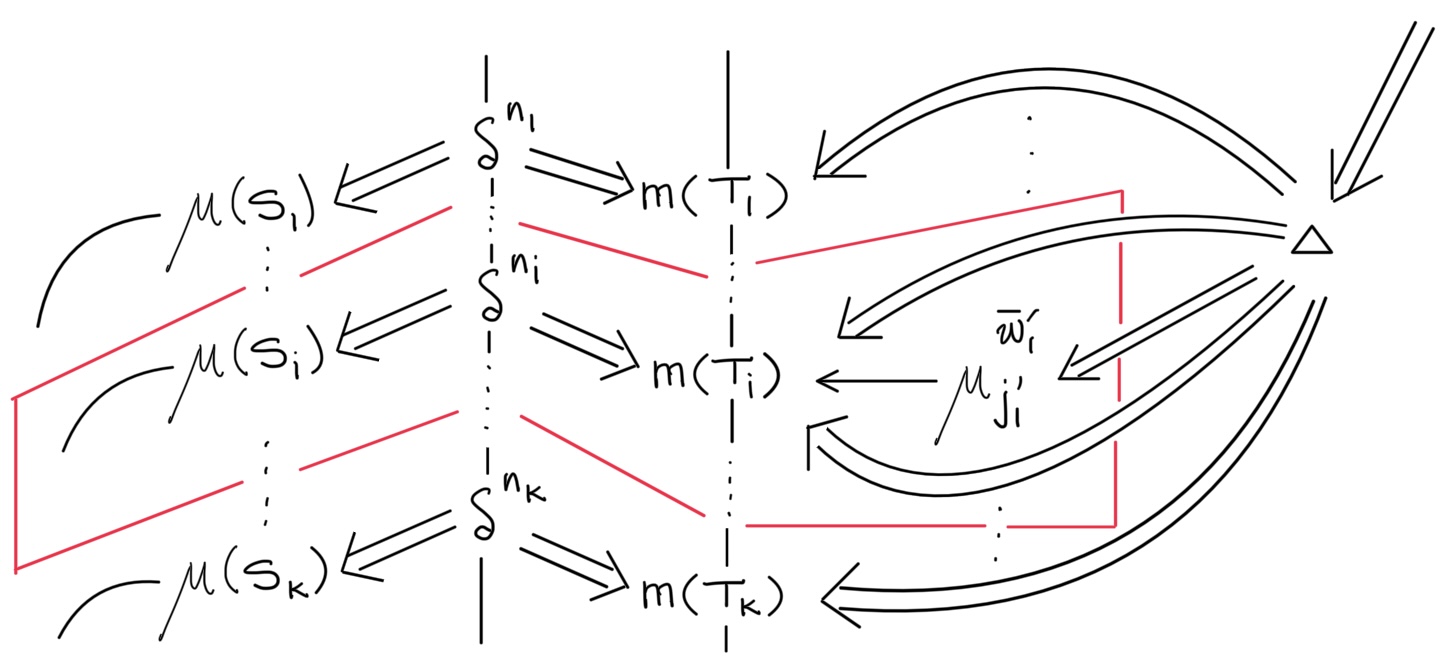}}
            \end{center}
            where $\Cc'$ denotes the set of conditions
            \[
                \begin{matrix}
                    k \geq 0 \\
                    n_{\ell}, j_{\ell} \geq 0 \text{ for each } \ell \\
                    \w_{\ell}'' \in \Lambda \\
                    \sum \w_{\ell}'' = \w_2 \\
                    (n_{\ell}, j_{\ell}, \w_{\ell}') \neq (0,0,0) \text{ for each } \ell \\
                    (S_{\ell}, T_{\ell}) \in \p^{n_{\ell}, j_{\ell}, \w_{\ell}''} \text{ for each } \ell
                \end{matrix}
            \]
            Notice that for each $i$, the portion of this term circled in red is of the form 
            \begin{align*}
                \sum_{\tiny \begin{matrix}  j_1 + j_2 = j + 1 \\ \w_1 + \w_2 = \w_0 \\(S, T) \in \p^{n, j_2, \w_2}\end{matrix}}(\mu(S) \otimes \id_{X} \otimes m(T)) \circ & (\id_{\TT^*\A \otimes X \otimes \TT^*\B \otimes Y} \otimes \mu_{j_1}^{\w_1})\circ (\delta^{n, X} \otimes \id_Y \otimes \Delta) \notag \\
                &= \sum_{\tiny \begin{matrix} j_1 + j_2 = j + 1 \\ \w_1 + \w_2 = \w_0 \end{matrix}}(\mu \otimes m) (\p^{n, j_2, \w_2} \circ_R \Psi_{j_1}^{\w_1}) \circ (\delta^{n, X} \otimes \id_{Y} \otimes \Delta) \tag{S1}\label{box8}
            \end{align*}
            for some fixed $n, j \geq 0, \w_0 \in \Lambda$ with $(n,j,\w_0) \neq (0,0,0)$. The second term is of the form
            \begin{center}
                \textbf{(**)} $\sum\limits_{\w_1 + \w_2 = \w} 
                (\overline{D}^{\A, \w_2} \otimes \id_{X \boxtimes Y}) \circ \delta^{X \boxtimes Y, \w_1} = \sum\limits_{\Cc''} \sum\limits_{\tiny \begin{matrix}
                    i \leq j \\
                \end{matrix}}$ \parbox{8cm}{\includegraphics[width = 8cm]{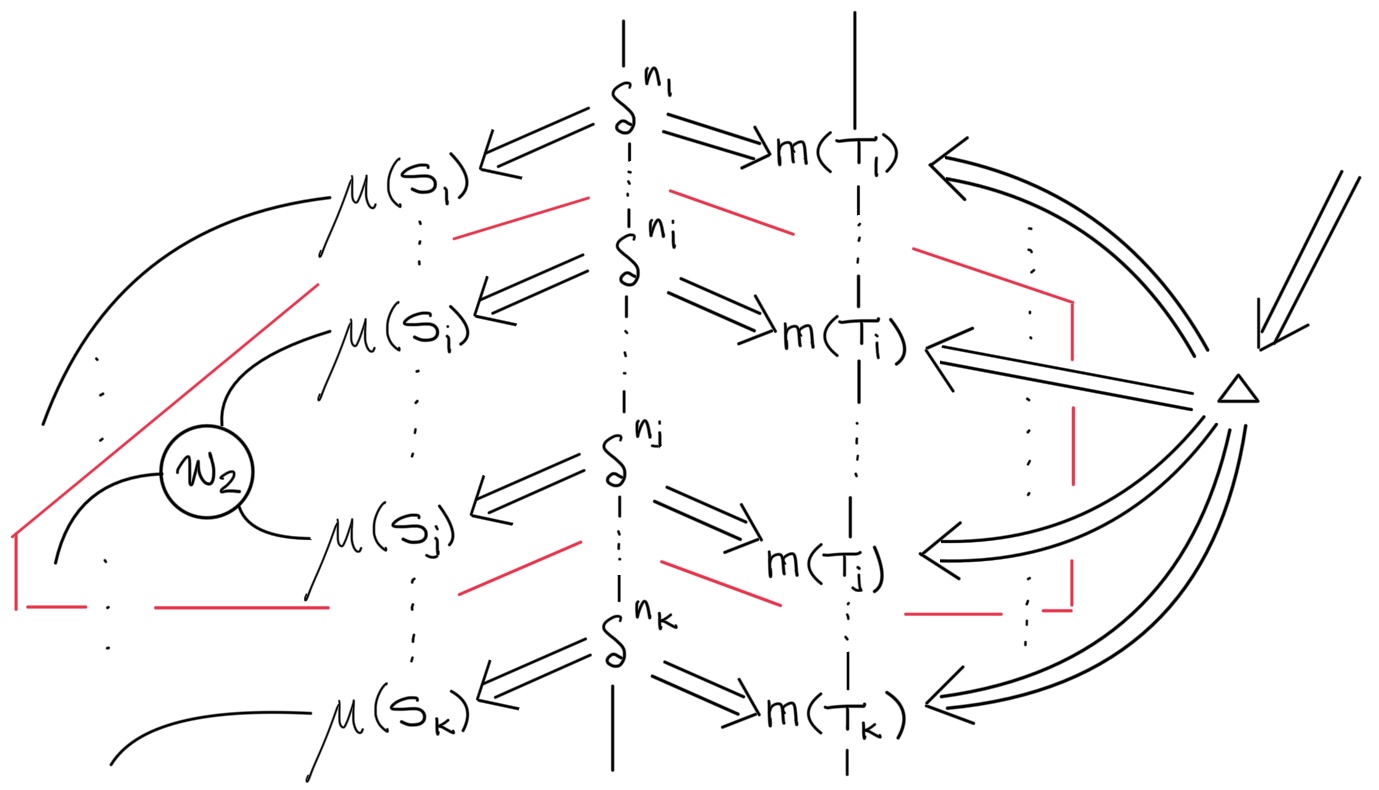}}
            \end{center}
            where $\Cc''$ denotes the set of conditions
            \[
                \begin{matrix}
                    k \geq 0 \\
                    n_{\ell}, j_{\ell} \geq 0 \text{ for each } \ell \\
                    \w_{\ell}'' \in \Lambda \\
                    \sum \w_{\ell}'' = \w_1 \\
                    (n_{\ell}, j_{\ell}, \w_{\ell}') \neq (0,0,0) \text{ for each } \ell \\
                    (S_{\ell}, T_{\ell}) \in \p^{n_{\ell}, j_{\ell}, \w_{\ell}''} \text{ for each } \ell
                \end{matrix}
            \]
            Notice that for each $i < j$, the portion of this term circle in red is of the form
            \begin{align*}
                \sum_{\tiny \begin{matrix}
                    \sum n_i = n \\
                    \sum j_i = j \\
                    \w_2 + \sum \w_i' = \w_0 \\
                    (S_i, T_i) \in \p^{n_i, j_i, \w_0} 
                    \end{matrix}} (\mu_n^{\w_2} \otimes \id_{X \otimes Y \otimes \TT^*\A}) & \circ (\mu(S_r) \otimes \id_X \otimes m(T_r)) \circ \cdots \circ (\mu(S_1) \otimes \id_X \otimes m(T_1)) \circ (\delta^{n, X} \otimes \id_Y \otimes \Delta) \\
                &=  \sum_{\tiny \begin{matrix}
                    \sum n_i = n \\
                    \sum j_i = j \\
                    \w_2 + \sum \w_i' = \w_0 \\
                \end{matrix}} (\mu \otimes m) ( \CR^{\w_2}(\p^{n_i, j_i, \w_i'}, \ldots, \p^{n_r, j_r, \w_r'})) \circ (\delta^{n, X} \otimes \id_Y \otimes \Delta) \tag{S2}\label{box9}
            \end{align*}
            for some fixed $(n, j, \w_0)\neq (0,0,0)$.

            Now, we can reindex the outer sums in (*) and (**), above, so that for each sequence of indices, the corresponding terms in (*) and (**) respectively are identical outside of the red circles. That is, we can rewrite~\eqref{box3} so that it is of the form
            \begin{center}
                $ \sum\limits_{\tilde{\Cc}} \sum\limits_{\begin{matrix}
                    n, j\geq 0 \\
                    \w_0 \in \Lambda \\
                    (n,j, \w_0) \neq (0,0,0)
                \end{matrix}}$ \parbox{6cm}{\includegraphics[width = 6cm]{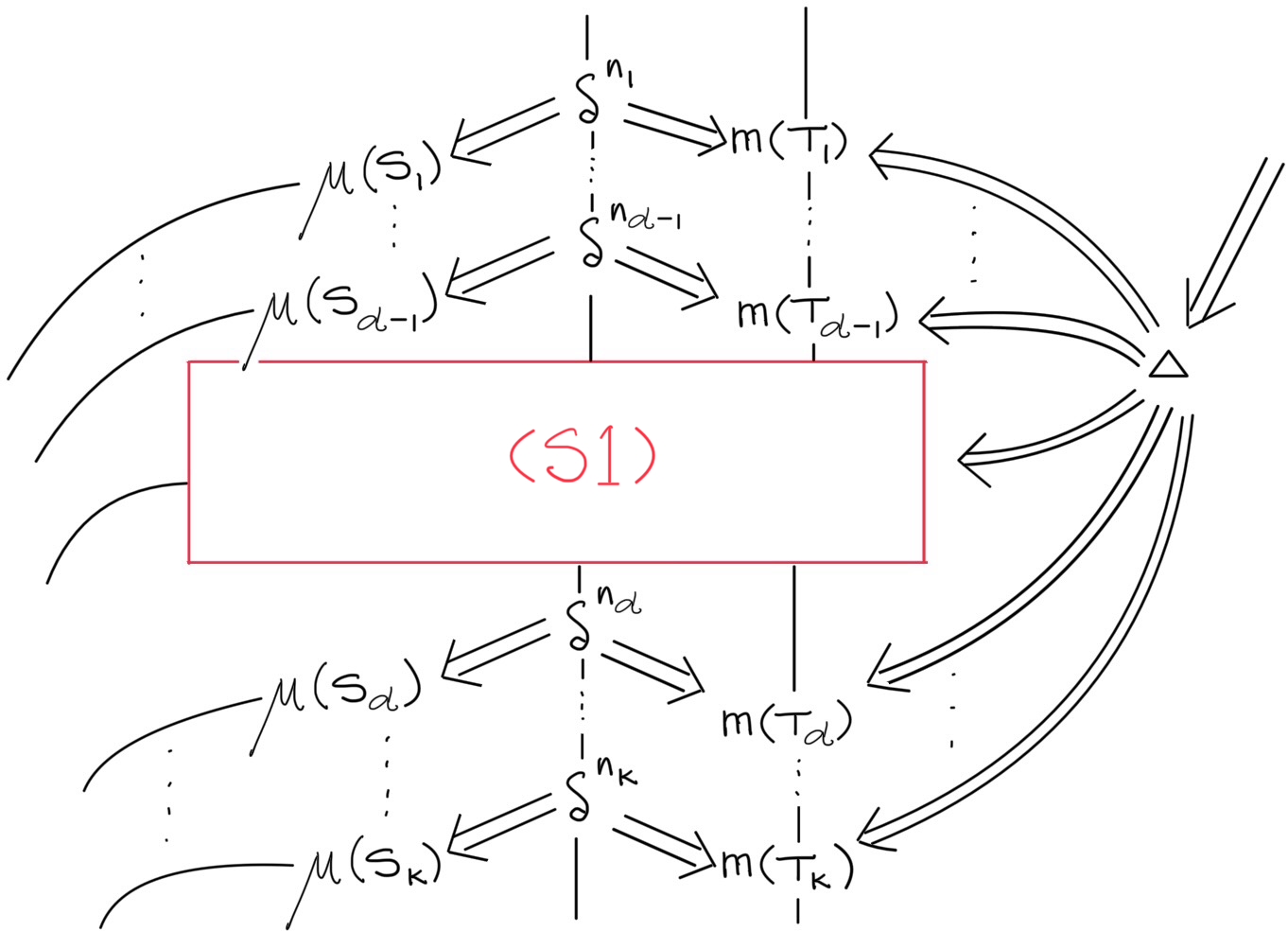}} $+$ \parbox{6cm}{\includegraphics[width = 6cm]{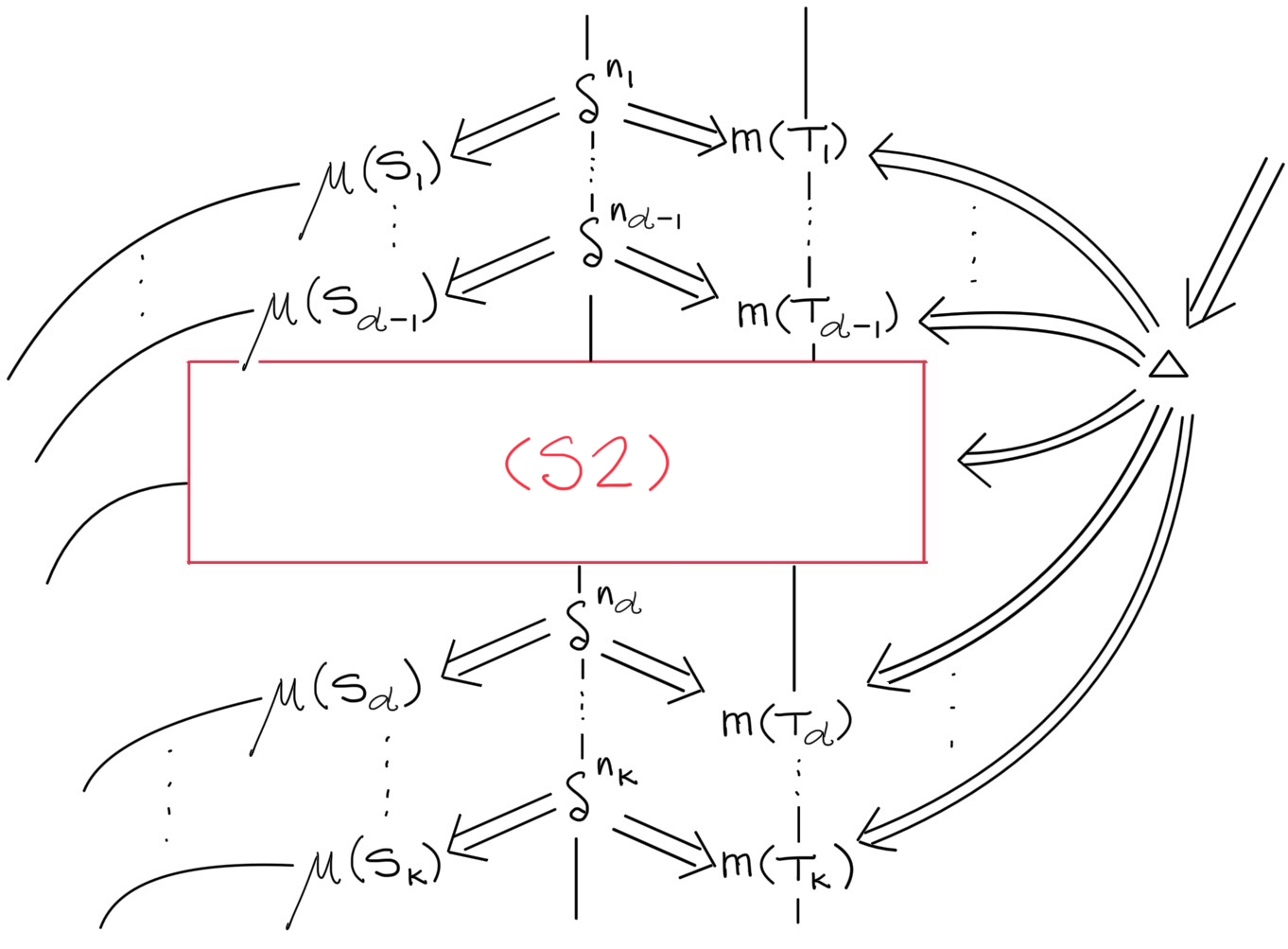}}
            \end{center}
            where (S1) and (S2) refer to the sums~\eqref{box8} and~\eqref{box9}, above, and $\tilde{\Cc}$ refers to the set of conditions
            \[
                \begin{matrix}
                    k \geq 0 \\
                    1 \leq \alpha \leq k \\
                    n_i, j_i \geq 0 \text{ for each } i \\
                    \w_i \in \Lambda \text{ for each } i \\
                    (S_i, T_i) \in \p^{n_i, j_i, \w_i} \text{ for each } i
                \end{matrix} 
            \]
            But by~\eqref{box7} we know that for each $n, j \geq 0, \w_0 \geq 0$ with $(n,j, \w_0) \neq (0,0,0)$, (S1) + (S2) $= 0$. This means that each term of the sum over $\tilde{\Cc}$ vanishes, and~\eqref{box3} is verified.
        \end{proof}

    \section{The $\A_{\infty}$-algebras $\A$ and $\B$}\label{algs}

        Fix $N \in \Z_{\geq 3}$, Let $R_0$ the ring of idempotents with $N$ generators, as defined in Definition~\ref{aux27}. Define 
         \begin{align*}
             R_1 &= R_0[V_0] \\
             R_2 &= R_0[V_1, \ldots, V_{N + 1}] \\
             \Lambda_1 &= \Z_{\geq 0} \langle\e_1, \ldots, e_{N + 1} \rangle \\
             \Lambda_2 &= \Z_{\geq 0}\langle \e_0 \rangle
         \end{align*}
         Then $(R_1, \Lambda_1), (R_2, \Lambda_2)$ form a compatible pair of ground rings and weight spaces in the sense of Definition~\ref{defs2}.

        \subsection{The construction of $\A$}\label{alg1}

         Now, let $\A$ be an $R_1$-bimodule generated by elements $\{U_i\}_{i = 1}^N, \{s_i\}_{i = 1}^N$, as well as an identity element $1$, with $R_1$-action defined as follows:
         \begin{align*}
             \I_j U_i = U_i \I_j &= \begin{cases}
                 U_i & i = j \\ 0 & \text{ otherwise}
             \end{cases} \\
             \I_j s_i = s_i \I_{j + 1} &= \begin{cases}
                 s_i & i = j \\ 0 &\text{ otherwise}
             \end{cases}
         \end{align*}
         We also stipulate that each of the $\I_i$ acts trivially on $1$, and that $V_0 a = a V_0$ for any $a \in \A$. We often call the $\{U_i\}_{i = 1}^N$ and $\{s_i\}_{i = 1}^N$ \emph{short chords}. 

         $\A$ is a bigraded space. We now define the two gradings and discuss how they interact with the operations. The first grading on $\A$ is a $\Z$-valued grading $m$ called the \emph{Maslov grading}. We define
         \[
            m(U_i) = m(s_i) = m(1) =  0 \text{ for each } 1 \leq i \leq N
        \]
        We also formally define
        \begin{align}\label{alg4}
            m(V_0) &= 2N + 2 \notag \\
            m(\I_j) &= 0 \text{ for each } 1 \leq i \leq N \notag \\
            m(\e_i) &= 2 \text{ for each } 1 \leq i \leq N + 1
         \end{align}
         and extend $m$ linearly to $R_1$ and $\Lambda_1$. Extend $m$ to all $\A$ according to the rule that 
        \begin{equation}\label{alg8}
            m(r \cdot a) = m(r) + m(a) \text{ for each } r \in R_1, \: a \in \A.
        \end{equation}
        The second grading on $\A$ is a vector-valued grading $A$ called the \emph{Alexander grading}, which maps generators of $\A$ into the lattice $\Z_{\geq 0}\langle \overline{1}, \ldots, \overline{2N} \rangle$ (where $\{\overline{i}\}$ denote the formal generators). We define $A$ on generators of $\A$ as
         \begin{align*}
            A(U_i) &= \overline{2i - 1} \text{ for each } 1 \leq i \leq N \\
            A(s_i) &= \overline{2i} \text{ for each } 1 \leq i \leq N \\
            A(1) &= \sum_{i = 1}^{2N} \overline{i}
         \end{align*}
         and formally set
         \begin{align}\label{alg5}
             A(V_0) &= \sum_{i = 1}^{2N} \overline{i} \notag \\
             A(\I_i) = A(\e_i) &= \overline{2i - 1} \text{ for each } 1 \leq i \leq N \notag \\
             A(\e_{N + 1}) &= \sum_{i = 1}^{N} \overline{2i}
         \end{align}
         extending $A$ linearly to all of $R_1$ and $\Lambda_1$. Extend $A$ to all $\A$ according to the rules that
         \begin{equation}\label{alg9}
            A(r \cdot a) = A(r) + A(a) \text{ for each } r \in R_1, \: a \in \A
         \end{equation}
         
         Operations $\mu_n^{\w}$ on $\A$ are defined in the following way. First, we define $\mu_2$, which we will write as multiplication:
         \begin{align*}
             U_i U_j &=  0 \text{ unless } i = j \\
            s_i U_j = U_i s_j &= \text{ for each } 1 \leq i, j \leq N \\
            s_i s_j &= 0 \text{ unless } i = (j - 1) \emm N
         \end{align*}
         Define
         \[
            s_{ij} = s_i s_{i + 1} \cdots s_{j-1}
         \]
         for each $i < j \emm N$, where all indices are written $\emm N$ -- so, for instance, $s_i$ can be written as $s_{i (i + 1)}$ , and $s_{ii} = s_i s_{i + 1} \cdots s_{i - 2} s_{i - 1}$ for each $i$. Define
         \[
            U_{N + 1} = \sum_{i = 1}^N s_{ii}
         \]
         Define
         \[
            \mu_0^{\e_i} = U_i \text{ for each } 1 \leq i \leq N + 1
         \]
         For each $1 \leq i \leq N$, write
         \[ 
            S_{2i-1} = (U_i, s_i, U_{i + 1}, s_{i + 1} \ldots, U_{i + N - 1}, s_{N + i + 1}),
        \]
        and
        \[
            S_{2i} = (s_i, U_{i + 1}, s_{i + 1}, \ldots, U_{i + N - 1}, s_{i + N - 1}, U_i),
        \]
        where all indices are $\emm N$. The non-zero unweighted higher operations are defined as follows:
         \[
            \mu_{j(2N - 2) +2}(S_i^j) = V_0^j I_i \cdot 1
         \]
         where $S_i^j$ denotes the concatenation of the string $S_i$ with itself in a certain sense. In the original construction of~\cite{KhKos}, these operations are determined by a family of $2N$-valent labeled graphs, of which two examples are given in Figure~\ref{alg10}. For a more complete statement of the necessary and sufficient conditions for a string $(a_1, \ldots, a_n)$ of elements of $\A$ to have $\mu_n^0(a_1, \ldots, a_n) \neq 0$, see Section 4.2 of~\cite{KhKos}, and more specifically, Theorem 4.1.

         \begin{figure}[H]
             \centering
             \includegraphics[width = 3cm]{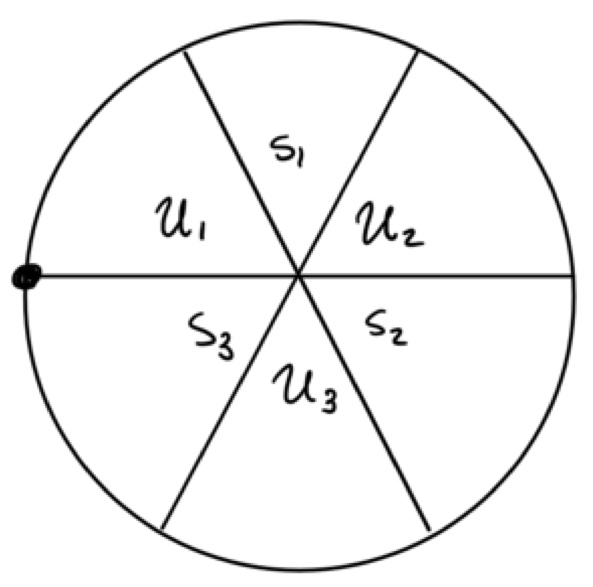} \hspace{.5cm}\includegraphics[width = 5cm]{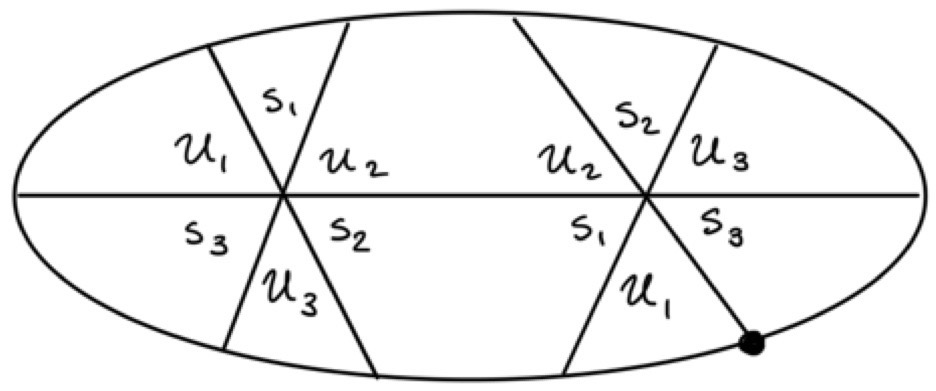}
             \caption{A $\mu_6$ and a $\mu_{10}$}
             \label{alg10}
         \end{figure}

         There are also infinitely many non-zero weighted operations $\mu_{j(2N - 2) - 2k + 2}^{\w}$, where $k = |\w|$ and $j \in \Z_{\geq 1}$. See Figure~\ref{alg11} for two such examples. For more complete conditions necessary and sufficient for a string $(a_1, \ldots, a_n)$ and $\w \in \Lambda_1 \smallsetminus \{0\}$ to have $\mu_n^{\w}(a_1, \ldots, a_n) \neq 0$, see Theorem 4.7 of~\cite{KhKos}.

         \begin{wrapfigure}{r}{3cm}
             \includegraphics[width = 3cm]{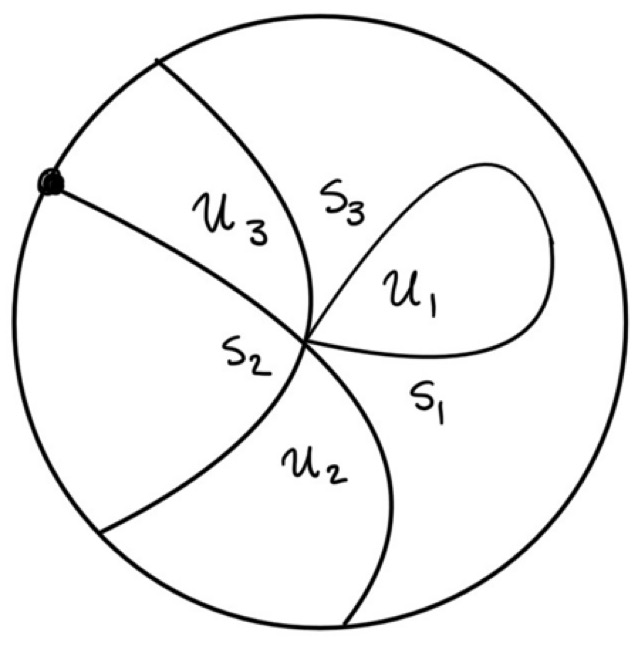}
             
             \includegraphics[width = 3cm]{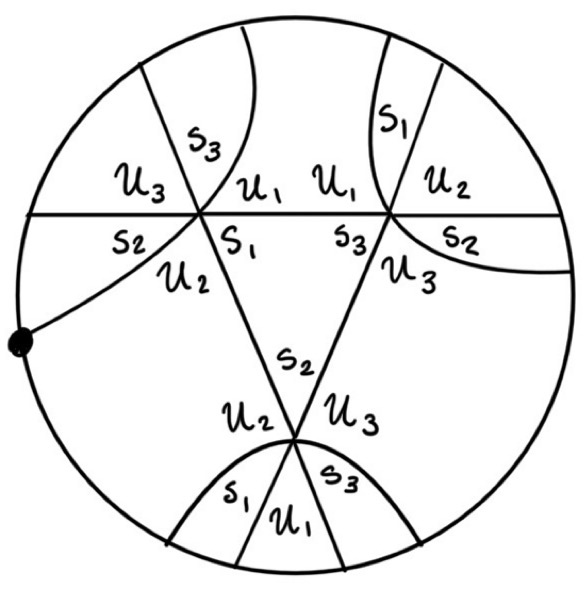}
             \caption{A $\mu_4^{\e_1}$ and a $\mu_{12}^{\e_4}$}\label{alg11}
         \end{wrapfigure}

          By construction, the operations on $\A$ satisfy
         \begin{equation}\label{alg3}
            m(\mu_n^{\w}(a_1, \ldots, a_n)) = \sum_{i = 1}^n m(a_i) + m(\w) + n - 2
         \end{equation}
          and
         \begin{equation}\label{aux34}
            A(\mu_n^{\w}(a_1, \ldots, a_n)) = \sum_{i = 1}^n A(a_i) + A(\w).
         \end{equation}

        \subsection{The construction of $\B$}\label{alg2}

        The weighted $\A_{\infty}$-algebra $\B$ is defined as an $R_2$-bimodule with generators $\{\rho_i\}_{i = 1}^N$ and $\{\sigma_i\}_{i = 1}^N$, and an identity element $1$, with $R_2$-action defined as follows:
        \begin{align*}
             \I_j \rho_i = \rho_i \I_j &= \begin{cases}
                 \rho_i & i = j \\ 0 & \text{ otherwise}
             \end{cases} \\
             \I_{j + 1} \sigma_i = \sigma_i \I_{j} &= \begin{cases}
                 \sigma_i & i = j \\ 0 &\text{ otherwise}
             \end{cases}
         \end{align*}
         We also stipulate that each $\I_i$ acts trivially on $1$, and that $V_i b = b V_i$ for each $1 \leq i \leq N + 1$ and $b \in \B$. We often call the $\rho_i$'s and $\sigma_i$'s \emph{short chords}.

         $\B$ is also equipped with a Maslov grading $m$ and an Alexander grading $A$. We define
         \begin{align*}
             m(\rho_i) = m(\sigma_i) &= -1 \text { for each } 1 \leq i \leq N \\
             m(1) &= 0 \\
         \end{align*}
         and formally define
         \begin{align}\label{alg6}
            m(\I_i) &= 0 \text{ for each } 1 \leq i \leq N \notag \\
            m(V_i) &= -2 \text{ for each } 1 \leq i \leq N + 1 \notag \\
            m(\e_0) &= - (2N - 2) \\
         \end{align}
         so we can extend $m$ linearly to $R_2$ and $\Lambda_2$. Extend $m$ to all $\B$ by setting
         \[ 
            m(b_1 \cdot b_2) = m(b_1) + m(b_2) \text{ for each } b_1, b_2 \in \B,
        \]
        and 
        \[
            m(r \cdot b) = m(r) + m(b) \text{ for each } r \in R_2, \: b \in \B.
        \]
        Define the Alexander grading $A$ as a map from the generators of $\B$ to $\Z_{\geq 0} \langle \overline{1}, \ldots, \overline{2N} \rangle$, as
         \begin{align*}
             A(\rho_i) &= \overline{2i - 1} \text{ for each } 1 \leq i \leq N \\
             A(\sigma_i) &= 
         \end{align*}
         and formally set
         \begin{align}\label{alg7}
             A(V_i) = A(\I_i) &= \overline{2i - 1} \text{ for each } 1 \leq i \leq N \notag \\
             A(V_{N + 1}) &= \sum_{i = 1}^N \overline{2i} \notag \\
             A(\e_0) &= \sum_{i = 1}^{2N}
         \end{align}
         Extend to all elements of $\B$ according to the rule that
         \[
            A(b_1 \cdot b_2) = A(b_1) + A(b_2) \text{ for each } b_1, b_2 \in \B
         \]
         and
         \[
            A(r \cdot b) = A(r) + A(b) \text{ for each } r \in R_2, \: b \in \B.
         \]

         \begin{remark}\label{algs1}
             \emph{For the Alexander grading on both $\A$ and $\B$, we make the following auxiliary definitions. First, we define}
             \[
                A(a_1, \ldots, a_n) = \sum_{i = 1}^n A(a_i)
             \]
             \emph{for $(a_1, \ldots, a_n)$ either a string of elements in $\A$ or a string of elements in $\B$. For $S$ denoting an element of $\A, \B, R_1, R_2, \Lambda_1, \Lambda_2$, or a string of elements in $\A$ or in $\B$, such that}
             \[
                A(S) = \sum_{i = 1}^{2N} k_i \cdot \overline{i}, 
             \]
             define
             \[
                |A(S)| = \sum_{i = 1}^{2N} |k_i|.
             \]
         \end{remark}
         
         We next discuss operations on $\B$. First, define $\mu_1(\rho_i) = V_i \cdot 1$ for each $i$. Define
         \begin{align*}
            \rho_i \sigma_j &= 0 \text{ unless } i = j + 1 \\
            \sigma_i \rho_i &= 0 \text{ unless } i = j \\ 
            \sigma_i \sigma_j &= 0 \text{ for each } i, j \\
            \rho_i \rho_j &= 0 \text{ for each } i, j
         \end{align*}
         We then write
         \[
            U_0 = \sum_{i = 1}^N \rho_{i + N - 1} \sigma_{i + N - 2} \cdots \rho_{i + 1} \sigma_{i} + \sum_{i = 1}^N \sigma_{i + N - 1} \rho_{i + N- 1} \cdots \sigma_i \rho_i
         \]
         Now define
         \[
            \mu_0^{\e_0} = U_0
         \]
         There are also various additional weighted and unweighted operations involving strings $(b_1, \ldots, b_n)$ with $|A(b_1, \ldots, b_n)| \geq N$. It is, however, difficult to give a simple and concise description for these operations, so we refer the interested reader to Section 5.2 of~\cite{KhKos}.

         We note only the following salient points. First, each element of $\B$ can be written uniquely as a sum of elements of the form $b = p(V_1, \ldots, V_{N + 1}) \cdot \tau$, where $p$ is a monomial in $V_1, \ldots, V_{N + 1}$, and $\tau$ is a product of $\rho_i$'s and $\sigma_i$'s. Second, the operations are constructed so that each non-zero operations satisfies
         \[
            m(\mu_n^{\w}(b_1, \ldots, b_n)) = \sum_{i = 1}^n m(b_i) + m(\w) + n - 2
         \]
         and
         \[
            A(\mu_n^{\w}(b_1, \ldots, b_n) = \sum_{i = 1}^n A(b_i) + A(\w).
         \]

    \section{The $\A_{\infty}$-bimodules}\label{bims}

        \subsection{The AA-bimodule $Y$}\label{aa}
        As defined in Section 6 of~\cite{KhKos}, the weighted AA-bimodule $\:^{\B} Y^{\A}$ is an $(R_2,R_1)$-bimodule with generators $\x_1, \ldots, \x_N$ with the following $R_1$ and $R_2$-actions: 
        \[
            \I_j \cdot \x_i = \x_i \cdot \I_j = \begin{cases}
                \x_i & \text{ if } i = j \\ 
                0 & \text{ otherwise} \\
            \end{cases}
        \]
        The bimodule $Y$ is also equipped with a $\Z$-valued Maslov grading $m$ and a $\Z_{\geq 0} \langle \overline{1}, \ldots, \overline{2N} \rangle$-valued Alexander grading $A$, which are defined as follows. Set $m(\x_i) = 0$ for each $i$, and $A(\x_i) = \overline{2i - 1}$ for each $i$. Extend $m$ to the ground rings $R_1$ and $R_2$ according to the rules of~\eqref{alg4} and~\eqref{alg6}, respectively, and extend $A$ to the ground rings $R_1$ and $R_2$ according to the rules of~\eqref{alg5} and~\eqref{alg7}, respectively. Then extend $m$ and $A$ to all $Y$ by rules analogous to~\eqref{alg8} and~\eqref{alg9}. 

        Non-zero operations are defined to be in one-to-one correspondence with rigid holomorphic disks in a star diagram such as those in Figure~\ref{intro1} with boundary on either the red $\alpha$-arcs, the blue $\beta$-arcs, or the boundary circles. For instance, in
        \begin{figure}
            \centering
            \includegraphics[width=8cm]{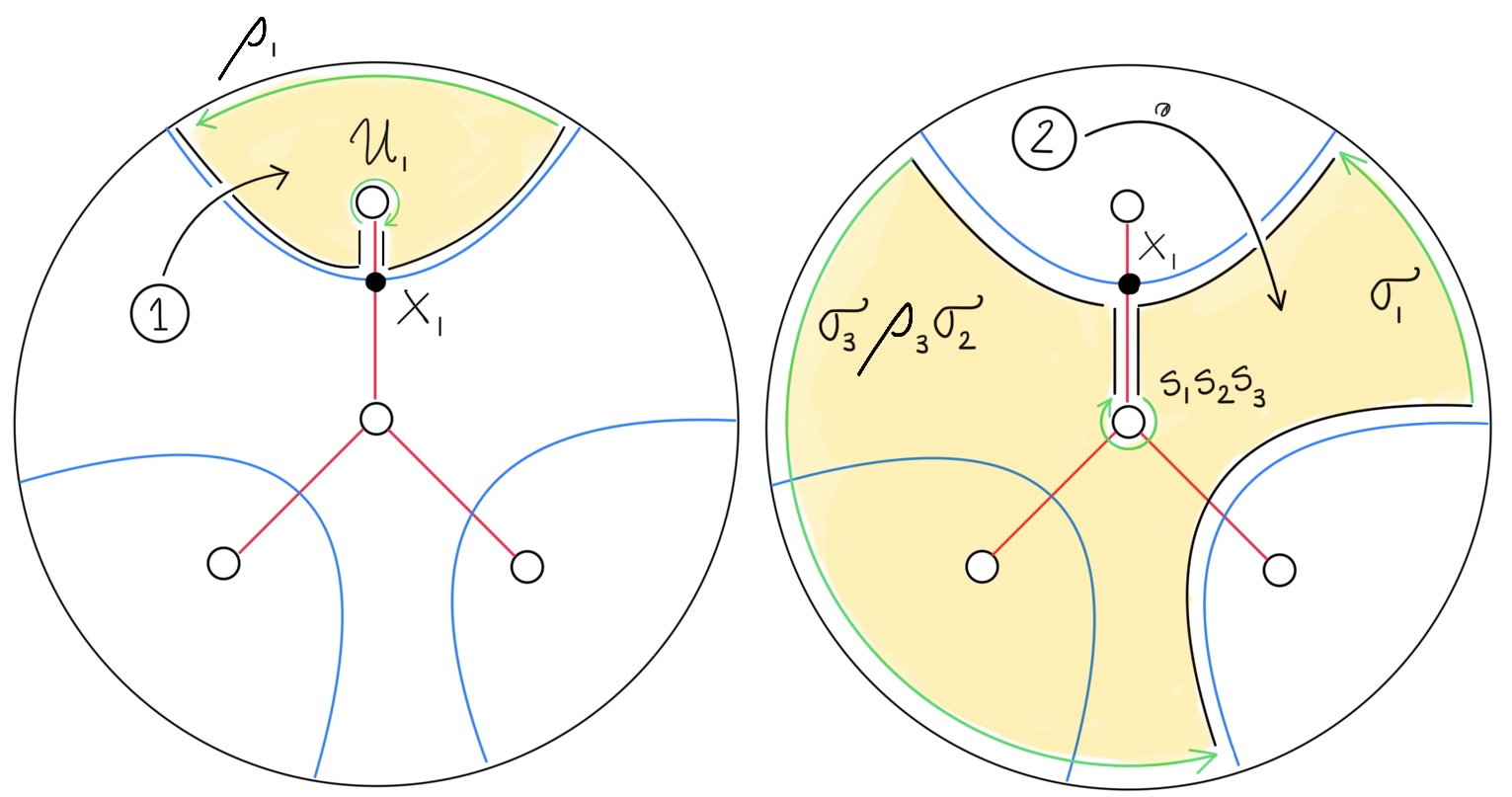}
            \caption{}
            \label{aa17}
        \end{figure}
        the disk labeled (1) corresponds to the operation 
        \begin{equation}\label{aa2}
            m_{1,1}^0(\rho_1, \x_1, U_1) = \x_1,
        \end{equation}
        and the disk labeled (2) corresponds to the operation
        \begin{equation}
            m_{2|1}^{\e_3}(\sigma_3\rho_3\sigma_2, \sigma_1, \x_1, s_{11}) = V_3 \cdot \x_1
        \end{equation}
        All operations satisfy the following grading rules:
        \begin{align*}
            m(m_{n|j}^{\w}(b_1, \ldots, b_n, \x, a_1, \ldots, a_j)) &= \sum_{i = 1}^n m(b_i) + m(\x) + \sum_{i = 1}^j m(a_i) + m(\w) + n + j - 1, \\
            A(m_{n|j}^{\w}(b_1, \ldots, b_n, \x, a_1, \ldots, a_j)) &= \sum_{i = 1}^n A(b_i) + A(\x) + \sum_{i = 1}^j A(a_i) + A(\w)
        \end{align*}
        The only other salient points about operations on $Y$ are that:
        \begin{itemize}
            \item $m_{n|0}^0 \equiv 0$ for each $n \geq 0$; This is clear for grading reasons, since the output of any non-zero operation on $Y$ is a generator of $Y$ (this is a fact of the construction in Section 6 of~\cite{KhKos}) and
            \[
                m( m_{n|0}^0(b_1, \ldots, b_n, \x)) = \sum m(b_i) + n - 1 < 0
            \]
            But all generators of $Y$ are in Maslov grading 0.

            \item If $a \in \A$ is a short chord, $m_{n|1}^0(b_1, \ldots, b_n, \x, a) \neq 0$ in precisely the following situations:
            \begin{itemize}
                \item $m_{1|1}^0(\rho_i, \x_i, U_i) = \x_i$ for each $1\leq i \leq N$;
                \item $m_{1|1}^0(\sigma_i, \x_i, s_i) = \x_i$ for each $1 \leq i \leq N$
            \end{itemize}
            This is less obvious, but follows from the construction in Section 6 of~\cite{KhKos}.
        \end{itemize}

        \subsection{The DD-bimodule $X$}\label{dd}

        As defined in Section 7 of~\cite{KhKos}, the DD-bimodule $\:^{\A} X\:^{\B}$ is defined as follows. As an $(R_1, R_2)$-bimodule, it has generators $\overline{\x}_1, \ldots, \overline{\x}_N$ with $R_1$ and $R_2$ action defined by
        \[
            \I_j \cdot \overline{\x}_i = \overline{\x}_i \cdot \I_j = \begin{cases}
                \overline{\x}_i & \text{ if } i = j \\ 
                0 & \text{ otherwise} \\
            \end{cases}
        \]
        The bimodule $X$ is equipped with Maslov and Alexander gradings as in all other cases, and has $m(\overline{\x}_i) = 0$, $A(\overline{x}_i) = \overline{2i - 1}$ for each $1 \leq i leq N$. We extend $m$ and $A$ to the ground rings, and then to all $X$ by means analogous to those used for $Y$. The differential is defined by
        \[
            \delta^1(\x_i) = U_i \otimes \x_i \otimes \rho_i + s_i \otimes \x_i \otimes \sigma_i.
        \]
        As in the case of $Y$, $\:^{\B} X^{\A}$ has the same generators, gradings, and $R_1$ and $R_2$-actions as $\:^{\A} X^{\B}$, except that the operations are reversed, i.e.
        \[
            \delta^{1, \:^{\B} X^{\A}}(\x_i) = \rho_i \otimes \x_i \otimes U_i + \sigma_i \otimes \x_i \otimes s_i.
        \]

        The last thing to show is that the sum in~\eqref{defs5} which makes up the $\A_{\infty}$-relations for $X$ is finite. For this, we use Lemma~\ref{aux35}. While $\A$ is not bonsai, we prove the following lemma in~\cite{KhKos}: 
        \begin{lemma}\label{aux36}
            [Lemma 7.1 of~\cite{KhKos}] Start with the generator $\x_i$ of $X$ which is in idempotent $\I_i$. Then the only non-vanishing $(\mu_n^{\w} \otimes \I) \circ \delta^n$ are those which give as output
		\begin{enumerate}[label = (\alph*)]
			\item $U_{i} \otimes V_i \otimes \overline{\x}$ for $1 \leq i \leq N$;
			
			\item $V_0 \otimes U_0\otimes \overline{\x}$;
		\end{enumerate}
		Moreover, each of these can be obtained in exactly two ways.
        \end{lemma}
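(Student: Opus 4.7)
The plan is to enumerate the terms in $\delta^n(\x_i) \in \A^{\otimes n} \otimes X \otimes \B^{\otimes n}$ by iteration, and then to determine, for each such term and each choice of $(n,\w)$, when the composition with $\mu_n^{\w,\A\otimes\B} \otimes \id_X$ yields a non-vanishing result, using the explicit descriptions of non-vanishing operations in $\A$ and $\B$ recalled in Section~\ref{algs}.

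First I would compute $\delta^n(\x_i)$ explicitly. Since $\delta^1$ has exactly two terms, iterating $n$ times produces $2^n$ strings of the form $a_1 \otimes \cdots \otimes a_n \otimes \overline{\x}_{\ell} \otimes b_n \otimes \cdots \otimes b_1$, where each pair $(a_j, b_j)$ is either of $U$--$\rho$ type or $s$--$\sigma$ type with matching short-chord index, and the entire sequence of indices and the terminal idempotent $\ell$ are determined by the branching choices. The DD-bimodule idempotent compatibility of Definition~\ref{aux31} forces the permissible concatenations, so the list of possible strings is short.

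Next I would match these strings against the non-vanishing conditions. By Theorems~4.1 and~4.7 of~\cite{KhKos}, $\mu_n^{\vv,\A}(a_1,\ldots,a_n)\neq 0$ forces $(a_1,\ldots,a_n)$ to be (a concatenation or cyclic rotation of) the distinguished strings $S_i^j$, with output of the form $V_0^r \cdot U_{\bullet}$ or $V_0^r \cdot 1$; Section~5.2 of~\cite{KhKos} provides the analogous restriction on $\B$. The algebra diagonal $\Gamma^{n,\w}$ then distributes the weight between $\A$ and $\B$, and the nondegeneracy and stacking axioms of Definition~\ref{defs20} force the two sides to have matching input lengths. Combined with the seed conventions identifying $\e_i \in \Lambda_1$ with $V_i \in R_2$ and $\e_0 \in \Lambda_2$ with $V_0 \in R_1$, the only consistent outcomes that survive are: (a) the $\A$ side contracts to $U_i$ via a weight-$\e_i$ operation while the $\B$-side contracts via $\mu_1(\rho_i)=V_i\cdot 1$; and (b) the $\B$ side contracts to $U_0$ via a weight-$\e_0$ operation while the $\A$-side contributes $V_0$ through the distinguished cycles of Section~\ref{alg1}.

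Finally, to prove that each output arises in exactly two ways, I would directly exhibit the two $\delta^n$-paths that produce the matching left- and right-hand strings in each case. For (a), the two paths correspond to the two places one can begin the $\B$-side cycle around the $i$-th puncture (one starting with $\rho_i$, the other with $\sigma_{i-1}$); for (b), the two paths correspond to the two summands in the definition of $U_0$ from Section~\ref{alg2}. The principal obstacle is the completeness check: the idempotent constraints of Definitions~\ref{aux28} and~\ref{aux31} eliminate most potential pairings, but one still has to verify that no further surviving pair sneaks through, which requires examining the full interplay between the admissible $S_i^j$-type strings on the $\A$ side, the analogous chord sequences on the $\B$ side, and the image of $\Gamma^{n,\w}$ in $\tilde{X}^{n,\w_1}_* \otimes \tilde{X}^{n,\w_2}_*$.
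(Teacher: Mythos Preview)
The present paper does not contain a proof of this lemma. The text immediately preceding the statement reads ``we prove the following lemma in~\cite{KhKos},'' and the lemma is explicitly labeled as Lemma~7.1 of~\cite{KhKos}; it is then used here only as a black box to conclude that the sum in~\eqref{defs5} is finite. There is therefore no proof in this paper against which to compare your proposal.

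Your outline --- enumerate the $2^n$ terms of $\delta^n$ applied to a generator, match the resulting $\A$- and $\B$-side strings against the classifications of non-vanishing $\mu_n^{\w}$ from Theorems~4.1, 4.7 and Section~5.2 of~\cite{KhKos}, and then exhibit the two contributing paths in each case --- is a reasonable strategy for what one would expect the argument in~\cite{KhKos} to look like. Whether it actually matches that proof, and in particular whether your proposed pairing mechanism for the ``exactly two ways'' count in case~(a) is the correct one, cannot be adjudicated from the text of this paper alone.
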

        It follows from this lemma that $\delta^n \equiv 0$ for all $n$ sufficiently large, and hence, that the sum on the left hand side of~\eqref{defs5} is finite, and in fact, vanishes.

        \subsection{The box tensor product}

        The box tensor products $\:^{\A} X^{\B} \boxtimes \:_{\B} Y_{\A}$ and $\:^{\B} X^{\A} \boxtimes \:_{\A} Y_{\B}$ are defined according to the construction of Section~\ref{boxy}. The salient for the proof of Theorem~\ref{Koszul}, below, is that the each case the box product has a single generator. Indeed, consider the first case, i.e. $\:^{\A} X^{\B} \boxtimes \:_{\B} Y_{\A}$. Here, the underlying $(R_1,R_1)$-bimodule is a tensor product over $R_2$, so the generators are precisely products $\x \otimes \y$ where $\x$ is a generator of $X$ and $\y$ is a generator of $Y$. Hence, generators of the box product are among the $\{\overline{\x}_i \otimes \x_j\}_{1 \leq i, j \leq N}$. Now, for generator $\overline{\x}_i \otimes \x_j \in \:^{\A} X^{\B} \boxtimes \:_{\B} Y_{\A}$ and an idempotent $\I_k \in R_0$, we need
        \[
            (\overline{\x}_i \cdot \I_k) \otimes \x_j = \overline{\x}_i \otimes (\I_k \cdot \x_j)
        \]
        so we need $\x, \y$ to be in the same idempotent, i.e. $i = j$. Hence, the generators of $\:^{\A} X^{\B} \boxtimes \:_{\B} Y_{\A}$ are of the form $\{\overline{\x}_i \otimes \x_i\}_{i = 1}^N$, that is, there is one generator in each idempotent.  
        
    \section{Verifying Theorem~\ref{Koszul}}

    \begin{proof}[Proof of Theorem~\ref{Koszul}]
        There are two steps to the proof. First, we verify that
            \begin{equation}\label{dual1}
                \:^{\A} X^{\B} \boxtimes \:_{\B} Y_{\A} \cong \:^{\A}\id_{\A}.
            \end{equation}
            Then we show that
            \begin{equation}\label{dual2}
                \:^{\B} X^{\A} \boxtimes \:_{\A} Y_{\B} \cong \:^{\B}\id_{\B}
            \end{equation}

            To prove~\eqref{dual1}, we make the following observations. First, since the tensor product $X \otimes Y$ which determines the underlying $(R,R)$-bimodule for the box tensor product is taken over the idempotent ring, 
            $X \boxtimes Y$ has a single generator as an $(R,R)$-bimodule, namely $\x_0 = \sum_{j = 1}^N \I_j$. Also, $\delta^1_1 = 0$, since the terms from the right  hand side of~\eqref{box2}, for $j = 1, \: \w = 0$ would be of the form
            \begin{center}
                \begin{tikzcd}[column sep = tiny]
                    & \arrow[d, no head]&\arrow[dd, no head] \\ 
                    & \delta^{n, X} \arrow[dl, Rightarrow] \arrow[dd] \arrow[dr, Rightarrow] &  \\
                    \mu^{\A}(S) \arrow[d] & & m^{Y}(T) \arrow[d] \\
                    \: & \: & \:
                \end{tikzcd}
            \end{center}
            for $(S, T) \in \p^{n, 0, 0}$ for some $n$. But there is no non-zero, unweighted operation on $Y$ with no $\A$-inputs, so $\delta^1_1 = 0$. By Proposition~\ref{aux5}, there exists a weighted $\A_{\infty}$-algebra homomorphism $\varphi: \A \to \A$ such that $\:^{\A} [\varphi]_{\A} = \:^{\A} X^{\B} \boxtimes \:_{\B} Y_{\A} \cong \:^{\A}\id_{\A}$. 

            Next, we use the construction from the proof of Proposition~\ref{aux5} to compute $\varphi$ explicitly, and show that $\varphi = \id_{\A}$. The first step is to show that $\varphi_1: \A \to \A$ is the identity. For this, note that any term on the right hand side of~\eqref{box2} for $\delta_{2}^1$ is of the form
            \begin{center}
                \begin{tikzcd}[column sep = tiny]
                    & \arrow[d, no head]&\arrow[dd, no head]  & \arrow[ddl, bend left] \\
                    & \delta^{n, X} \arrow[dl, Rightarrow] \arrow[dd] \arrow[dr, Rightarrow] &  \\
                    \mu^{\A}(S) \arrow[d] & & m^{Y}(T) \arrow[d] & \\
                    \: & \: & \: &
                \end{tikzcd}
            \end{center}
            for $n \geq 0, (S,T) \in \p^{n, 1, 0}$. In particular, $m^Y(T)$ corresponds to a non-zero $m_{n|1}^0$ operation on $Y$. The only such operations with short-chord inputs on the $A$-side are 
            \begin{itemize}
                \item $m_{1|1}^0(\rho_i, \I_i, U_i) = \I_i$;

                \item $m_{1|1}^0(\sigma_i, I_i, s_i) = \I_i$;
            \end{itemize}
            Hence, for short-chord input, we need $n = 1$. Recalling that
            \begin{center}
                $\p^{1,1,0} = $ \parbox{2cm}{\includegraphics[width = 2cm]{prim6}}
            \end{center}
            and that
            \[
                \delta^{1, X}(\overline{\I}_i) = U_i \otimes \overline{\I}_i \otimes \rho_i + s_i \otimes \overline{\I}_i \otimes \sigma_i,
            \]
            it follows that 
            \[
                \delta_2^1(\x_0 \otimes U_i) = U_i \otimes \x_0,
            \]
            and 
            \[
                \delta_2^1(\x_0 \otimes s_i) = s_i \otimes \x_0,
            \]
            for each $i$. Hence, $\varphi_i = \id$ on the short chords in $\A$. Now, note that since the differential on $\A$ is zero, the $\A_{\infty}$-relation for $\varphi$ for two inputs is
            \begin{center}
               \parbox{2cm}{ \includegraphics[width = 2cm]{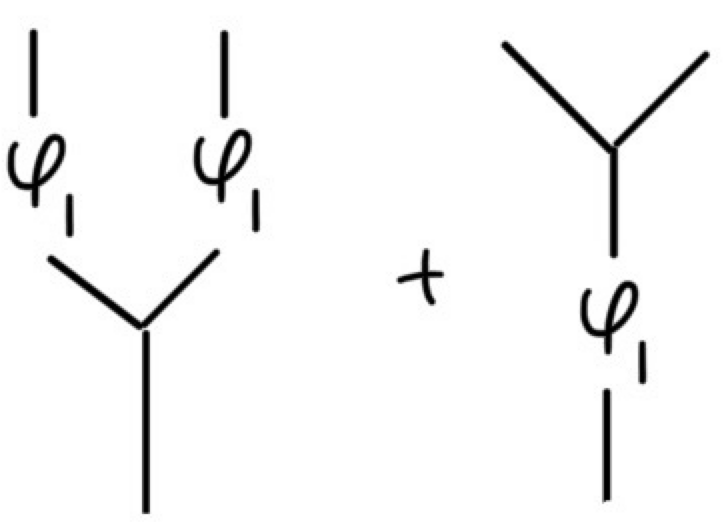}} \vspace{-0.5cm} $= 0$ \vspace{.5cm}
            \end{center}
            This means that 
            \[
                \varphi_1(U_i^2) = \varphi_1(U_i) \cdot \varphi_1(U_i) = U_i^2,
            \]
            so by induction, $\varphi_1(U_i^k) = U_i^k$ for each $1 \leq i \leq N$ and $k \in \Z_{\geq 0}$. Likewise,
            \[
                \varphi_1(s_i s_{i + 1}) = \varphi(s_i) \cdot \varphi(s_{i +1}) = s_{i} \cdot s_{i + 1},
            \]
            so again by induction, $\varphi_1(s_{ij}) = s_{ij}$ for each $i, j$. Hence, $\varphi_1$ is the identity on $\A$. 

            Next, we show that for grading reasons, $\varphi_i^{\w} = 0$ for each $(i, \w) \neq (1, 0)$. Recall that for $a_1, \ldots, a_n \in \A, \: \w \in \Lambda_1$, 
            \[
                m(\varphi_n^{\w}(a_1, \ldots, a_n) ) = \sum_{i = 1}^n m(a_i) + m(\w) + n - 1,
            \]
            so that since $m(a) = 0$ for each $a \in \A$, we have
            \[
                 m(\varphi_n^{\w}(a_1, \ldots, a_n) ) = m(\w) + n - 1.
            \]
            Since
            \[  
                m(\w) = 2 |\w|
            \]
            for each $\w \in \Lambda_1$, we would have 
            \[
                m(\varphi_n^{\w}(a_1, \ldots, a_n)) = 2|\w| + n - 1
            \]
           Since there are no elements with grading $> 0$, it follows that for $(n, \w)$ with $n \in \Z_{\geq 0}, \w \in \Lambda_1$ and $(n, \w) \neq (1, 0)$, $\varphi_n^{\w} \equiv 0$. Hence, $\varphi = \id_{\A}$, and it follows that
           \[
                \:^{\A}X^{\B} \boxtimes \:_{\B}Y_{\A} \cong \:^{\A}[\varphi]_{\A} = \:^{\A}[\id]_{\A} = \:^{\A} \id_{\A},
           \]
           as desired.

            We now turn our attention to~\eqref{dual2}. Let $\:^{\B} N_{\B}$ denote the bimodule on the left hand side of~\eqref{dual2}. By an argument identical to the one for~\eqref{dual1}, we can find a weighted $\A_{\infty}$-algebra homomorphism $\psi: \B \to \B$ such that 
            \[  
                \:^{\B} N_{\B} \cong \:^{\B}[\psi]_{\B}.
            \]
            By an argument identical to the one by which we showed $\varphi_1 = \id$ on the short chords, $\psi_1 = \id$ on the short chords in $\B$.
            
           We now show that $\psi_n^{\w} \equiv 0$ for all $(n, \w) \neq (1, 0)$ with $n \in \Z_{\geq 1}, \: \w \in \Lambda_2$. Recall that $\psi$ satisfies the following rules with respect to Maslov and Alexander gradings:
           \begin{equation}\label{dual3}
               m(\psi_n^{\w}(b_1, \ldots, b_n)) = \sum_{i = 1}^n m(b_i) + m(\w) + n - 1
           \end{equation}
            and
            \begin{equation}\label{dual4}
                A(\psi_n^{\w}(b_1, \ldots, b_n)) = \sum_{i = 1}^n A(b_i) + A(\w). 
            \end{equation}
            Recall that $V_1,\ldots V_{N + 1}$ are in the ground ring $R$ and $\psi$ is an $R$-module homomorphism, and that $\psi$ is unital (i.e. $\psi_n^{\w}(b_1 \ldots, \I_j, \ldots, b_n) = 0$ for any $\I_j$ and $(n, \w)$. Hence, we only need to compute values of $\psi_{n}^{\w}(b_1, \ldots b_n)$ where each $b_i$ is a product of $\rho_j$'s and $\sigma_j$'s. Note also that $\Lambda_2 = \Z_{\geq 0}\langle \e_{0} \rangle$, so for each $\w \in \Lambda_2,$ there exists $k \in \Z_{\geq 0}$ so that $\w = k \e_0$. Hence,~\eqref{dual3} becomes
            \begin{equation}\label{dual5}
                m(\psi_n^{k \e_0}(b_1, \ldots, b_n)) = \sum m(b_i) - k (2N - 2) + n - 1
            \end{equation}
            We are going to show that there is no element $b \in \B$ with Maslov index as in~\eqref{dual5}, and Alexander grading as in~\eqref{dual4}; in particular, we will show that any $b \in \B$ with Alexander grading given by the right hand side of~\eqref{dual4} will have Maslov index strictly less than~\eqref{dual5}.

            Any element $b \in \B$ is of the form $b = p(V_1, \ldots V_{N + 1}) \cdot \tau,$ where $p$ is a monomial in the $V_i$, and $\tau$ is a product of $\rho_j$'s and $\sigma_j$'s. Write the Alexander grading of $b$ as
            \[
                A(b) = \sum_{i = 1}^{2N} a_i \overline{i} = \sum_{i = 1}^N (\text{\# of $\rho_i$ appearing in $\tau$ + $\deg_{V_i} (p)$)} \cdot \overline{2i - 1} + \sum_{i = 1}^N (\text{\# of $\sigma_i$ appearing in $\tau$}) \cdot \overline{2i}. 
            \]
            and
            \begin{align}\label{dual7}
                m(b) &= - \sum_{i = 1}^N \text{\# of $\rho_i$ appearing in $\tau$} - 2 \sum_{i = 1}^N \deg_{V_i} (p) - \sum_{i = 1}^N \text{\# of $\sigma_i$ appearing in $\tau$} \notag \\
                &\leq -|A(b)|,
            \end{align}
            with equality if and only if $\deg(p) = 0$.
            
            Now look at a string $(b_1, \ldots, b_n)$, and a pair $(n, \w) \in \Z_{\geq 0} \times \Lambda_2$ where each $b_i$ is a product of $\rho_j$'s and $\sigma_j$'s. This means that
            \[  
                m(b_i) = - |A(b_i)| 
            \]
            for each $i$, so that
            \[
                \sum_{i = 1}^N m(b_i) = - |A(b_1, \ldots, b_n)|.
            \]
            Likewise,
            \[ 
                m(k \e_0) = - k (2N - 2) = |A(k \e_0)| + 2k
            \]
            This means that the right hand side of~\eqref{dual5} is 
            \begin{equation}\label{dual6}
                \text{RHS} = -|A(b_1, \ldots, b_n) + A(k \e_0)| + 2k + n - 1.
            \end{equation}
            
            If $b = \psi_n^{\w}(b_1, \ldots, b_n)$ is nonzero, then
            \[
                A(b) = A(b_1, \ldots, b_n) + A(k \e_0),
            \]
            so by~\eqref{dual7} 
            \[  
                m(b) \leq |A(b_1, \ldots, b_n) +  A(k \e_0)|
            \]
            but by~\eqref{dual5} and~\eqref{dual6}, this is a contradiction for any $(n, k \e_0) \neq (1, 0)$. This means that $\psi_n^{\w} \equiv 0$ for each $(n, \w) \neq (1,0)$. 

            It only remains to show that $\psi_1^0$ is the identity on long chords. But since we now know that $\psi_i^0 \equiv 0$ for each $i > 1$, we can now use the the $\A_{\infty}$-relation for two inputs to prove that $\varphi_1$ is the identity on all products of $\rho_j$'s and $\sigma_j$'s by an inductive argument identical to the one used for $\varphi,$ above.

            It now follows that $\psi = \id_{\B}$. Hence,
            \[
                \:^{\B} X^{\A} \boxtimes \:_{\A}Y_{\B} = \:^{\B}[\psi]_{\B} = \:^{\B}[\id]_{\B} = \:^{\B} \id_{\B},
            \]
            as desired.
    \end{proof}

    \bibliography{DiagRefs}

@article {FrGa,
    AUTHOR = {Francis, John and Gaitsgory, Dennis},
     TITLE = {Chiral {K}oszul duality},
   JOURNAL = {Selecta Math. (N.S.)},
  FJOURNAL = {Selecta Mathematica. New Series},
    VOLUME = {18},
      YEAR = {2012},
    NUMBER = {1},
     PAGES = {27--87},
      ISSN = {1022-1824,1420-9020},
   MRCLASS = {14F10 (14F05 14H81 17B55 18D10 18D50)},
  MRNUMBER = {2891861},
MRREVIEWER = {Beno\^it\ Fresse},
       DOI = {10.1007/s00029-011-0065-z},
       URL = {https://doi.org/10.1007/s00029-011-0065-z},
}

@misc{Heuts,
      title={Koszul duality and a conjecture of {F}rancis-{G}aitsgory}, 
      author={Gijs Heuts},
      year={2024},
      eprint={2408.06173},
      archivePrefix={arXiv},
      primaryClass={math.AT},
      url={https://arxiv.org/abs/2408.06173}, 
}

@misc{KhKos,
      title={Koszul dual $\mathcal{A}_{\infty}$ algebras for star-shaped diagrams -- Part 1}, 
      author={Isabella Khan},
      year={2025},
      eprint={2408.01564},
      archivePrefix={arXiv},
      primaryClass={math.GT},
      url={https://arxiv.org/abs/2408.01564}, 
}

@article {Bim,
    AUTHOR = {Lipshitz, Robert and Ozsv\'ath, Peter and Thurston, Dylan},
     TITLE = {Bimodules in bordered {H}eegaard {F}loer homology},
   JOURNAL = {Geom. Topol.},
  FJOURNAL = {Geometry \& Topology},
    VOLUME = {19},
      YEAR = {2015},
    NUMBER = {2},
     PAGES = {525--724},
      ISSN = {1465-3060,1364-0380},
   MRCLASS = {57R57 (53D40)},
  MRNUMBER = {3336273},
MRREVIEWER = {Brendan\ E.\ Owens},
       DOI = {10.2140/gt.2015.19.525},
       URL = {https://doi.org/10.2140/gt.2015.19.525},
}

@article {LOTtorus,
    AUTHOR = {Lipshitz, Robert and Ozsv\'ath, Peter and Thurston, Dylan},
     TITLE = {A bordered {$HF^-$} algebra for the torus},
   JOURNAL = {J. Symplectic Geom.},
  FJOURNAL = {The Journal of Symplectic Geometry},
    VOLUME = {23},
      YEAR = {2025},
    NUMBER = {1},
     PAGES = {37--158},
      ISSN = {1527-5256,1540-2347},
   MRCLASS = {53D37 (53D40)},
  MRNUMBER = {4886503},
       DOI = {10.4310/jsg.250403021401},
       URL = {https://doi.org/10.4310/jsg.250403021401},
}

@misc{DiagBible,
      title={Diagonals and {A}-infinity Tensor Products}, 
      author={Lipshitz, Robert and Ozsv\'ath, Peter and Thurston, Dylan},
      year={2023},
      eprint={2009.05222},
      archivePrefix={arXiv},
      primaryClass={math.RA},
      url={https://arxiv.org/abs/2009.05222}, 
}

@article {Man,
    AUTHOR = {Manion, Andrew},
     TITLE = {On bordered theories for {K}hovanov homology},
   JOURNAL = {Algebr. Geom. Topol.},
  FJOURNAL = {Algebraic \& Geometric Topology},
    VOLUME = {17},
      YEAR = {2017},
    NUMBER = {3},
     PAGES = {1557--1674},
      ISSN = {1472-2747,1472-2739},
   MRCLASS = {57M27},
  MRNUMBER = {3677935},
MRREVIEWER = {Radmila\ Sazdanovi\'c},
       DOI = {10.2140/agt.2017.17.1557},
       URL = {https://doi.org/10.2140/agt.2017.17.1557},
}

@article {OzSzKnot,
    AUTHOR = {Ozsv\'ath, Peter and Szab\'o, Zolt\'an},
     TITLE = {Holomorphic disks and knot invariants},
   JOURNAL = {Adv. Math.},
  FJOURNAL = {Advances in Mathematics},
    VOLUME = {186},
      YEAR = {2004},
    NUMBER = {1},
     PAGES = {58--116},
      ISSN = {0001-8708,1090-2082},
   MRCLASS = {57M27 (57R58)},
  MRNUMBER = {2065507},
MRREVIEWER = {Stanislav\ Jabuka},
       DOI = {10.1016/j.aim.2003.05.001},
       URL = {https://doi.org/10.1016/j.aim.2003.05.001},
}

@article {AlgMat,
    AUTHOR = {Ozsv\'ath, Peter and Szab\'o, Zolt\'an},
     TITLE = {Bordered knot algebras with matchings},
   JOURNAL = {Quantum Topol.},
  FJOURNAL = {Quantum Topology},
    VOLUME = {10},
      YEAR = {2019},
    NUMBER = {3},
     PAGES = {481--592},
      ISSN = {1663-487X,1664-073X},
   MRCLASS = {57M27 (57M25)},
  MRNUMBER = {4002230},
MRREVIEWER = {Christine\ Lescop},
       DOI = {10.4171/QT/127},
       URL = {https://doi.org/10.4171/QT/127},
}

@misc{Pong1,
      title={The pong algebra}, 
      author={Ozsv\'ath, Peter and Szab\'o, Zolt\'an},
      year={2024},
      eprint={2212.11885},
      archivePrefix={arXiv},
      primaryClass={math.GT},
      url={https://arxiv.org/abs/2212.11885}, 
}

@book {Ras,
    AUTHOR = {Rasmussen, Jacob Andrew},
     TITLE = {Floer homology and knot complements},
      NOTE = {Thesis (Ph.D.)--Harvard University},
 PUBLISHER = {ProQuest LLC, Ann Arbor, MI},
      YEAR = {2003},
     PAGES = {126},
      ISBN = {978-0496-39374-9},
   MRCLASS = {99-05},
  MRNUMBER = {2704683},
       URL =
              {http://gateway.proquest.com/openurl?url_ver=Z39.88-2004&rft_val_fmt=info:ofi/fmt:kev:mtx:dissertation&res_dat=xri:pqdiss&rft_dat=xri:pqdiss:3091665},
}

@article {Rob,
    AUTHOR = {Roberts, Lawrence P.},
     TITLE = {Planar algebras and the decategorification of bordered
              {K}hovanov homology},
   JOURNAL = {J. Knot Theory Ramifications},
  FJOURNAL = {Journal of Knot Theory and its Ramifications},
    VOLUME = {26},
      YEAR = {2017},
    NUMBER = {4},
     PAGES = {1750023, 23},
      ISSN = {0218-2165,1793-6527},
   MRCLASS = {57M27 (55N35)},
  MRNUMBER = {3632324},
MRREVIEWER = {Fengling\ Li},
       DOI = {10.1142/S0218216517500237},
       URL = {https://doi.org/10.1142/S0218216517500237},
}

@misc{ZemkeKos,
      title={Koszul duality and the link surgery formula}, 
      author={Ian Zemke},
      year={2025},
      eprint={2507.09964},
      archivePrefix={arXiv},
      primaryClass={math.GT},
      url={https://arxiv.org/abs/2507.09964}, 
}
    \bibliographystyle{plain}

\end{document}